\documentclass[a4paper,twoside,10pt]{article}
\usepackage[a4paper,left=3cm,right=3cm, top=3cm, bottom=3cm]{geometry}
\usepackage{soul}
\usepackage{mathrsfs}
\usepackage{amsmath}
\usepackage{amsthm}
\usepackage{amssymb}
\usepackage{cite}
\usepackage{cancel}
\usepackage{appendix}
\usepackage{tikz}
\usetikzlibrary{calc}
\usepackage{soul}
\usepackage{dsfont}
\usepackage{enumerate}   
\usepackage{comment}
\usepackage{booktabs}
\usepackage{siunitx}
\usepackage{multirow}
\usepackage[dvipsnames]{xcolor}
\usepackage[normalem]{ulem}

\usepackage{hyperref}

\usepackage{todonotes}

\theoremstyle{plain}
\newtheorem{theorem}{Theorem}[section]
\newtheorem{corollary}[theorem]{Corollary}
\newtheorem{lemma}[theorem]{Lemma}

\theoremstyle{remark}
\newtheorem{remark}[theorem]{Remark}

\theoremstyle{definition}

\numberwithin{equation}{section}
\numberwithin{figure}{section}

\newcommand{\blue}[1]{{\color{blue}{#1}}}
\newcommand{\cyan}[1]{{\color{cyan}{#1}}}

\newcommand{\orange}[1]{{\color{orange}{#1}}} 
\newcommand{\red}[1]{{\color{red}{#1}}} 
\newcommand{\eremk}{\hbox{}\hfill\rule{0.8ex}{0.8ex}}

\newcommand{\Norm}[1]{{\left\|{#1} \right\|}}
\newcommand{\Normthreebars}[3]{\left|\!\left|\!\left| (#1,#2,#3) \right|\!\right|\!\right|}
\newcommand{\SemiNorm}[1]{{\left|{#1} \right|}}
\newcommand{\jump}[1]{\left[\!\left[#1\right]\!\right]}

\newcommand{\Pbb}{\mathbb{P}}
\newcommand{\Pbbk}{\mathbb{P}_k}
\newcommand{\Pbbkmo}{\mathbb{P}_{k-1}}
\newcommand{\Pbbkmt}{\mathbb{P}_{k-2}}

\newcommand{\Ccalh}{\mathcal{C}_h}
\newcommand{\Tauh}{\mathcal{T}_h}

\newcommand{\Rbb}{\mathbb{R}}
\newcommand{\Sbb}{\mathbb{S}}

\newcommand{\Rbbd}{\mathbb{R}^d}

\newcommand{\Rbbdd}{\mathbb{R}^{d\times d}}
\newcommand{\K}{K}
\newcommand{\F}{F}
\newcommand{\Sigmah}{\underline{\boldsymbol{\Sigma}}_h}

\newcommand{\Sigmahz}{\Sigmah^0}
\newcommand{\Sigmahkmo}{\Sigmah^{k-1}}
\newcommand{\Vh}{{\bf{V}}_h}
\newcommand{\Vhk}{\Vh^k}
\newcommand{\Vhkz}{\Vbf_{h,0}^k}
\newcommand{\Qh}{Q_h}
\newcommand{\Qhkmo}{\Qh^{k-1}}

\newcommand{\taubfh}{\underline{\boldsymbol{\tau}}_h}
\newcommand{\vbfh}{\mathbf{v}_h}
\newcommand{\qh}{q_h}
\newcommand{\sigmabfh}{\underline{\boldsymbol{\sigma}}_h}
\newcommand{\ubfh}{\mathbf{u}_h}
\newcommand{\ph}{p_h}
\newcommand{\taubf}{\underline{\boldsymbol{\tau}}}

\newcommand{\vbf}{\mathbf{v}}

\newcommand{\sigmabf}{\underline{\boldsymbol{\sigma}}}
\newcommand{\ubf}{\mathbf{u}}
\newcommand{\p}{p}
\DeclareMathOperator{\dive}{div}
\DeclareMathOperator{\divebf}{\mathbf{div}}
\DeclareMathOperator{\dev}{\underline{\mathbf{dev}}}
\DeclareMathOperator{\rot}{rot}
\DeclareMathOperator{\curlbf}{\mathbf{curl}}

\DeclareMathOperator{\tr}{tr}
\newcommand{\nbf}{\mathbf{n}}

\newcommand{\normtang}{{nt}}

\newcommand{\sym}{\underline{\bf{sym}}}

\DeclareMathOperator{\norm}{\bf{n}}
\DeclareMathOperator{\tang}{\bf{t}}
\newcommand{\tangscalar}{t}
\newcommand{\Ltwo}{L^2}
\newcommand{\LtwoOmega}{\Ltwo(\Omega)}
\newcommand{\LtwozeroOmega}{\Ltwo_0(\Omega)}
\newcommand{\Ltwozeroomega}{\Ltwo_0(\omega)}
\newcommand{\LtwoOmegad}{\Ltwo(\Omega)^d}
\newcommand{\Fcalhc}{\mathcal{F}_{\Ccalh}}
\newcommand{\Fcalhd}{\mathcal{F}_{\mathcal{I}_h}}
\newcommand{\Fcalh}{\mathcal{F}_h}

\newcommand{\VcalK}{\mathcal{V}_\K}
\newcommand{\FcalK}{\mathcal{F}_\K}

\newcommand{\bbf}{\mathbf{b}}
\newcommand{\xbf}{\mathbf{x}}
\newcommand{\bbfK}{\mathbf{b}_\K}

\newcommand{\dx}{\, \mathrm{d}x}
\newcommand{\ds}{\, \mathrm{d}s}
\newcommand{\Hcurldivbf}{H(\curlbf\divebf,\Omega)}
\newcommand{\Hdiv}{H(\dive,\Omega)}
\newcommand{\Hzerodiv}{H_0(\dive,\Omega)}
\newcommand{\zerobf}{\mathbf{0}}
\newcommand{\fbf}{\mathbf{f}}
\newcommand{\bone}{b_1}
\newcommand{\btwo}{b_2}
\newcommand{\numo}{\nu^{-1}}
\newcommand{\nablabf}{\boldsymbol{\nabla}}

\newcommand{\epsbfun}{\underline{\boldsymbol{\varepsilon}}}
\newcommand{\nablabfun}{\underline{\boldsymbol{\nabla}}}
\newcommand{\CcalhK}{\Ccalh(\K)}

\newcommand{\qkmo}{q_{k-1}}
\newcommand{\qkmt}{q_{k-2}}

\newcommand{\stildekmt}{\widetilde{s}_{k-2}}
\newcommand{\stildekmti}{\stildekmt^i}

\newcommand{\qkmoF}{q^\F_{k-1}}
\newcommand{\qkF}{q^\F_{k}}
\newcommand{\psiEi}{\psi^{\Fi}}

\newcommand{\psiKio}{\psi^{\Ki}_1}
\newcommand{\psiKit}{\psi^{\Ki}_2}

\newcommand{\lambdai}{\lambda_i}
\newcommand{\lambdaKi}{\lambda^{\Ki}_{i}}

\newcommand{\rkmoi}{r_{k-1}^i}
\newcommand{\skmoi}{s_{k-1}^i}
\newcommand{\lkmo}{\ell_{k-1}}
\newcommand{\mkmo}{m_{k-1}}
\newcommand{\Ki}{{\K_i}}
\newcommand{\Fi}{{\F_{\Ki}}}
\newcommand{\Fj}{\F_{\Ki}^j}
\newcommand{\ncompo}{n_1}
\newcommand{\ncompt}{n_2}
\newcommand{\tcompo}{t_1}
\newcommand{\tcompt}{t_2}
\newcommand{\itt}{\mathtt{i}}
\newcommand{\jtt}{\mathtt{j}}
\newcommand{\normi}{\norm_i}
\newcommand{\tangi}{\tang_i}
\newcommand{\normj}{\norm_j}
\newcommand{\tangj}{\tang_j}
\newcommand{\normitt}{\norm_{\itt}}

\newcommand{\normjtt}{\norm_{\jtt}}
\newcommand{\tangjtt}{\tang_{\jtt}}

\newcommand{\tangone}{\tcompo}
\newcommand{\tangtwo}{\tcompt}

\newcommand{\Idbf}{\underline{\mathbf{I}}}
\newcommand{\Id}{I}
\newcommand{\nutilde}{\widetilde{\nu}}
\newcommand{\nbfK}{\mathbf{n}_\K}
\newcommand{\tbfK}{\mathbf{t}_\K}
\newcommand{\hK}{h_\K}
\newcommand{\hKi}{h_{\Ki}}
\newcommand{\nbfF}{\mathbf{n}_\F}
\newcommand{\tbfF}{\mathbf{t}_\F}
\newcommand{\hF}{h_\F}

\newcommand{\FcalhB}{\mathcal{F}_h^{\text{ext}}}

\newcommand{\vbfn}{\vbf_{n}}
\newcommand{\vbft}{\vbf_{t}}

\newcommand{\sigmabfnt}{\sigmabf_{n t}}
\newcommand{\taubfnn}{\taubf_{n n}}
\newcommand{\taubfnt}{\taubf_{n t}}

\newcommand{\NormLtwoK}[1]{\Norm{#1}_{\Ltwo(\K)}}

\newcommand{\PizF}{\Pi_0^{\F}}
\newcommand{\PizFi}{\Pi_0^{\Fi}}

\newcommand{\Ione}{I_1}
\newcommand{\Itwo}{I_2}
\newcommand{\Kone}{\K_1}
\newcommand{\Ktwo}{\K_2}
\newcommand{\Kthree}{\K_3}
\newcommand{\Ctilde}{\widetilde{C}}

\newcommand{\ceqone}{c_{\text{eq},1}}
\newcommand{\Ceqone}{C_{\text{eq},1}}
\newcommand{\Ceqtwo}{C_{\text{eq},2}}
\newcommand{\Ceqthree}{C_{\text{eq},3}}
\newcommand{\IKone}{I_{\K,1}}
\newcommand{\IKtwo}{I_{\K,2}}
\DeclareMathOperator{\conv}{conv}

\newcommand{\aKi}{\mathbf{a}^\K_i}
\newcommand{\aKij}{\mathbf{a}^{\Ki}_j}
\newcommand{\aKii}{\mathbf{a}^{\Ki}_i}
\newcommand{\aone}{\mathbf{a}_1}
\newcommand{\atwo}{\mathbf{a}_2}
\newcommand{\athree}{\mathbf{a}_3}
\newcommand{\ai}{\mathbf{a}_i}
\newcommand{\aj}{\mathbf{a}_j}
\newcommand{\ak}{\mathbf{a}_k}
\newcommand{\Zh}{Z_h}
\newcommand{\IcalQhkmo}{\mathcal{I}_{\Qh}}
\newcommand{\IcalVhk}{\mathcal{I}_{\Vh}}
\newcommand{\IcalSigmahkmo}{\mathcal{I}_{\Sigmah}}
\newcommand{\IcalKi}{\mathcal{I}_{\Ki}}
\newcommand{\Sigmabf}{\underline{\boldsymbol{\Sigma}}}
\newcommand{\Vbf}{\mathbf{V}}
\newcommand{\sIi}{s_{\mathcal{I}}^i}
\newcommand{\rIi}{r_{\mathcal{I}}^i}
\newcommand{\PikmoFi}{\Pi_{k-1}^{\Fi}}

\newcommand{\Lcalk}{\mathcal{L}_k}
\newcommand{\Lcalzero}{\mathcal{L}_0}
\newcommand{\Lcalkmo}{\mathcal{L}_{k-1}}
\newcommand{\stildeIi}{\tilde{s}^i_{\mathcal{I}}}
\newcommand{\lambdaj}{\lambda_j}

\newcommand{\qbfunderkmo}{\underline{\mathbf{q}}_{k-1}}
\newcommand{\Pikmo}{\Pi_{k-1}}
\newcommand{\skmoFi}{s_{k-1}^{\Fi}}
\newcommand{\HDG}{\text{hyb}}
\newcommand{\VHDG}{\Vbf_{\HDG}}
\newcommand{\SigmaHDG}{\Sigmabf_{\HDG}}
\newcommand{\UHDG}{\mathbb{U}_{\HDG}}

\newcommand{\VhatHDG}{\widehat{\Vbf}_{\HDG}}
\newcommand{\QhatHDG}{\widehat{Q}_{\HDG}}

\newcommand{\vhhat}{\hat{\vbf}_h}
\newcommand{\uhhat}{\hat{\ubf}_h}

\DeclareMathOperator{\BDM}{BDM}
\newcommand{\Ekmosigmasym}{E_{\sigmabf,\rm{sym}}^{k-1}}
\newcommand{\Ekmosigma}{E_{\sigmabf}^{k-1}}

\newcommand{\Ekmop}{E_{p}^{k-1}}
\newcommand{\Eku}{E_{\ubf}^{k}}
\newcommand{\Ekepsu}{E_{\epsbfun(\ubf)}^{k}}
\newcommand{\abf}{\mathbf{a}}
\newcommand{\cbf}{\mathbf{c}}
\newcommand{\Pbbz}{\Pbb_0}
\newcommand{\Nel}{N_{el}}
\newcommand{\btwohyb}{b_{2,\text{hyb}}}
\newcommand{\Kj}{\K_j}
\newcommand{\pkmo}{p_{k-1}}
\newcommand{\piF}{p_{i}^F}
\newcommand{\qi}{q_{i}}

\newcommand{\wi}{w_i}

\title{A mass-conserving stress-yielding formulation for the Stokes equation
        with exact stress symmetry}
\author{Philip L. Lederer\thanks{Department of Mathematics, University of Hamburg, D-20146 Hamburg, Germany (philip.lederer@uni-hamburg.de)},
Marialetizia Mosconi \thanks{Department of Mathematics and Applications, University of Milano-Bicocca, 20125 Milan, Italy (m.mosconi@campus.unimib.it)}}
\date{}

\begin{document}
\maketitle

\begin{abstract}
\noindent
We introduce a new finite element discretization for a mixed formulation of
the two-dimensional incompressible Stokes equations
with symmetric viscous stresses.
The method is based on a mass-conserving stress-yielding formulation (Gopalakrishnan, Lederer, Schöberl; A mass conserving mixed stress formulation for the Stokes equations; IMA J. Numer. Anal., Vol 40, 2020) with symmetric stresses,
with the key novelty that the symmetry of the stress tensor is enforced exactly
as an intrinsic property of the discrete space.
This space is constructed on the Clough--Tocher macroelement split and
consists of matrix-valued functions whose
normal-tangential components across macroelement interfaces are continuous.
Since the exact symmetry is enforced via local polynomial bubble functions
on the subelements,
which can be eliminated via static condensation,
the number of globally coupled degrees of freedom coincide
with those of the original mass-conserving stress-yielding formulation.
The resulting pressure-robust method is stable without requiring any additional enrichment.
We establish optimal convergence rates for all the variables
and present numerical experiments that confirm the theoretical results.

\medskip\noindent
\textbf{AMS subject classification}:
65N12, 
65N22, 
65N30, 
76D07, 
76M10 

\medskip\noindent
\textbf{Keywords}: mixed finite elements, incompressible flows, Stokes equation, exact symmetry, Clough--Tocher

\end{abstract}

\section{Introduction} \label{section:introduction}
We introduce a novel discretization of the mixed stress formulation of
the steady incompressible Stokes equations in two space dimensions,
where the symmetric viscous stresses are treated as an independent variable.
Let~$\Omega$ be an open, bounded domain in~$\Rbbd$, $d=2$,
with Lipschitz boundary~$\partial\Omega$, denoted by~$\Gamma$.
Throughout,
scalar-, vector-, and tensor-valued operators and functions
are denoted by standard, boldface, and underlined boldface symbols, respectively.
Given an external force~$\fbf: \Omega \to \Rbbd$
and kinematic viscosity~$\nutilde: \Omega \to \Rbb$,
consider the standard \emph{primal} velocity-pressure formulation of the Stokes equation:
find a velocity vector field~$\ubf: \Omega \to \Rbbd$
and a pressure scalar field~$p: \Omega \to \Rbb$
such that
\begin{subequations}\label{strong-standard}
\begin{alignat}{2}
-\divebf(2\nutilde\epsbfun(\ubf)) + \nablabf p &= \fbf      \quad && \text{in } \Omega, \\
\dive(\ubf)                                    &= 0         \quad &&\text{in } \Omega, \\
\ubf                                           &= \zerobf   \quad &&\text{on } \Gamma,
\end{alignat}
\end{subequations}
where $\epsbfun(\ubf) := 1/2(\nablabfun\ubf + (\nablabfun\ubf)^T)$ is the symmetric gradient of~$\ubf$.
Given a tensor~$\sigmabf$,
we consider its (matrix) trace~$\tr(\sigmabf) = \sum_{i=1}^d (\sigmabf)_{ii}$
and its deviatoric part~$\dev(\sigmabf) := \sigmabf - 1/d \tr(\sigmabf) \Idbf$, where $\Idbf$ is the identity matrix.
By introducing the viscous stress tensor~$\sigmabf := \nu\epsbfun(\ubf)$,
where~$\nu := 2 \nutilde$,
problem~\eqref{strong-standard} can be reformulated
as a \emph{mixed} formulation,
which involves the (pseudo) stresses as an additional unknown:
find
a stress tensor~$\sigmabf: \Omega \to \Rbbdd$,
a velocity vector field~$\ubf: \Omega \to \Rbbd$,
and a pressure scalar field~$p: \Omega \to \Rbb$
such that
\begin{subequations} \label{strong-mixed}
\begin{alignat}{2}
\nu^{-1}\dev(\sigmabf) - \epsbfun(\ubf)        &= \zerobf  \quad && \text{in }\Omega, \\
\dive\sigmabf - \nablabf p                     &= -\fbf    \quad && \text{in }\Omega, \\
\dive\ubf                                      &= 0        \quad && \text{in }\Omega, \\
\ubf                                           &= \zerobf  \quad && \text{on }\Gamma.
\end{alignat} 
\end{subequations}
Although the primal and mixed formulations are formally equivalent,
the latter demands less regularity on the velocity field~$\ubf$.
This formulation has been the subject of extensive study over the past decades; see, e.g.,
\cite{Farhloul:1995,Farhloul-Fortin:1993,Farhloul-Fortin:1997},
and the comprehensive literature on the design of stable discretizations
of the Stokes equation in both of the above formulations
is too vast to list here.

Our work builds upon
the \emph{Mass-Conserving Stress-yielding formulation} (MCS) framework
introduced in~\cite{Gopalakrishnan-Lederer-Schoberl:2020}
for the counterpart of the mixed formulation~\eqref{strong-mixed}
in which $\sigmabf$ is defined as $\nu\nablabfun\ubf$
(hence the stress is not necessarily symmetric)
and $\epsbfun(\ubf)$ in the first equation of~\eqref{strong-mixed}
is replaced by the gradient~$\nablabfun\ubf$.
The key feature of this formulation is that
the tensor-valued stress variable belongs to the space~$\Hcurldivbf$
of functions whose divergence acts continuously on~$\Hzerodiv$.
In the corresponding MCS method,
the stress variable is discretized using a family of finite elements
for which the continuity of the normal-tangential component of the tensors across element interfaces is imposed.
Given an integer~$k\geq1$,
the proposed method employs interior polynomial enrichment of order~$k$ to ensure stability,
whereas the traces on the edges are polynomials of order~$k-1$.
This discretization yields velocity approximations
that are exactly divergence-free,
thereby ensuring the exact mass conservation
together with the desirable (and non-trivial \cite{Linke:2014})
structure-preserving \emph{pressure-robustness} property.
Different approaches have been considered to achieve pressure-robustness.
Some possibilities are to modify the load in order to obtain
pressure-independent velocity error estimates;
see, e.g., \cite{Lederer-Linke-Merdon:2017,Linke:2012,Linke-Matthies-Tobiska:2016,Brennecke-Linke-Merdon-Schoberl:2015},
or to consider $H(\dive)$-conforming discretizations of the velocity field,
which are specifically designed to approximate the incompressibility constraint;
see, e.g., \cite{Cockburn-Kanschat-Schotzau:2005,Cockburn-Kanschat-Schotzau:2007}.
The latter approach
also relaxes the regularity requirement for the discrete velocity space
from~$H^1_0(\Omega,\Rbbd)$,
which is instead required in~\eqref{strong-standard}.

The MCS formulation was subsequently extended in~\cite{Gopalakrishnan-Lederer-Schoberl:2020-b}
to the physically relevant case of symmetric viscous stresses.
There,
symmetry is imposed weakly through an element-wise discontinuous Lagrange multiplier,
approximating the vorticity (curl of the velocity) variable,
and stability is established by enriching the stress space with suitable tensor-valued bubbles,
using techniques inspired by the classical work of~\cite{Stenberg:1988}.

More recently,
a \emph{hybrid} MCS formulation was proposed in~\cite{Gopalakrishnan-Kogler-Lederer-Schoberl:2023},
cast in the framework of hybrid discontinuous Galerkin (HDG) methods.
Such methods have been widely used in the literature
since their introduction in~\cite{Cockburn-Gopalakrishnan-Lazarov:2009},
where they were proposed as an efficient way of constructing
DG methods equipped with suitable interface variables,
with the aim of enabling static condensation.
In the same spirit,
the hybrid MCS formulation of~\cite{Gopalakrishnan-Kogler-Lederer-Schoberl:2023}
pursues a method with a minimal number of degrees of freedom per facet.
In addition,
the bubble enrichment of~\cite{Gopalakrishnan-Lederer-Schoberl:2020-b} is removed,
an interface variable is used to approximate the tangential component of the velocity,
and weak symmetry is imposed by means of a Lagrange multiplier in an $H(\dive)$-conforming space.

An alternative way to impose the symmetry of the stress tensor is exactly, i.e., pointwise,
rather than weakly through for instance a Lagrange multiplier.
In the recent work~\cite{Brubeck-Parker-Zerbinati:2026},
the importance of exactly versus weakly enforcing the symmetry constraint is discussed for incompressible flows and for linear elasticity.
Therein,
a series of benchmark problems is considered for both
weakly and exactly imposed symmetry,
showing that schemes with weak symmetry
can generate arbitrarily poor stress approximations,
even for exactly zero-stress configurations.
In contrast,
exact symmetry schemes are generally not affected by such a lack of accuracy,
independently of the chosen constitutive law.

In light of these developments,
it is natural to ask whether a stable MCS method can be constructed
for the mixed formulation~\eqref{strong-mixed},
where the symmetry of the stress tensor is imposed exactly
while avoiding higher-order interior bubble enrichment.

The exact enforcement of the symmetry constraint on the stress tensor is a challenging task
in the design of stable finite element methods,
both for incompressible flows and for linear elasticity,
and a variety of approaches have been proposed to address this challenge.
In the present work,
we will employ a \emph{macroelement} technique,
which has a long history as a crucial tool in the design
of stable finite element discretizations for the Stokes problem.
In particular,
we will consider the Clough--Tocher refinement
(also known in the literature as the HCT refinement);
see, e.g., \cite{Clough-Tocher:1965,Ciarlet:1991},
where each triangle of the original mesh is split into three subtriangles
by connecting the barycenter to the vertices.
This composite split has been widely used for the Stokes problem;
see, e.g., \cite{Qin:1994,Zhang:2005,Arnold-Qin:1992},
as well as for the analysis of the Hellinger--Reissner linear elasticity problem;
see, e.g., \cite{Watwood-Hartz:1968,Johnson-Mercier:1979}.
Motivated by these developments,
we build on the Clough--Tocher macroelement structure
to construct exactly symmetric stress spaces within the MCS framework.

\paragraph*{Goals of the paper.}
The goal of this work is to develop
a stable and exactly symmetric discretization of the MCS formulation in two space dimensions,
where the Clough--Tocher macroelement split is exploited
to design a new family of finite element spaces for the stress variable~$\sigmabf$.
In this construction,
the symmetry of the stress tensor is exactly imposed as a built-in property,
rather than being imposed weakly.
More precisely,
given an integer~$k \geq 1$,
the discrete stress space consists of \textbf{exactly symmetric} and trace-free
tensor-valued functions,
whose components are
piecewise polynomial of order~$k-1$
on the Clough--Tocher refinement.
In particular,
no higher-order interior bubble enrichment is required,
unlike in the original MCS construction.
For the velocity and pressure variables,
we will consider the same spaces as those in~\cite{Gopalakrishnan-Lederer-Schoberl:2020},
defined on the original (unrefined) mesh:
as discrete velocity space,
vector-valued piecewise polynomial functions of order~$k$
with square-integrable divergence;
as discrete pressure space,
discontinuous piecewise polynomials of order~$k-1$.

With these choices,
the method preserves the main structural properties of the MCS framework,
namely exact mass conservation,
pointwise divergence-free discrete velocities,
and pressure robustness.

Moreover,
after a static condensation step,
the resulting globally coupled unknowns coincide with those of the original MCS method,
hence no additional coupling is introduced via the macroelement strategy.
We prove stability of the method,
derive optimal a priori error estimates for all discrete variables,
and assess its performance through numerical experiments.

\paragraph*{Outline of the paper} 
The remainder of the paper is organized as follows.
In Section~\ref{section:preliminaries},
we introduce the continuous formulation and the functional setting.
Section~\ref{section:MCS-symmetric} is devoted to the presentation of the symmetric MCS method
and to the discussion of the structure and the properties of the stress discrete space,
while the a priori error analysis is presented in Section~\ref{section:a-priori}.
Numerical experiments illustrating the theoretical results are presented in Section~\ref{section:numerics}.

\section{Preliminaries} \label{section:preliminaries}
We recall the functional setting, introduce the necessary notation,
and consider the model problem in Section~\ref{subsection:functional-setting}.
Details on the employed meshes and broken polynomial, Sobolev, and finite element spaces
are given in Section~\ref{subsection:meshes}.

\subsection{Functional setting and model problem} \label{subsection:functional-setting}
We denote the unit outward normal vector to~$\Gamma$
by~$\nbf_\Omega$
and the diameter of~$\Omega$ by~$h_\Omega$.
Consider a vector-valued function~$\vbf = (v_i)_{i=1}^d$
and a tensor-valued function~$\taubf = (\tau_{ij})_{i,j=1,\dots,d} $.
We denote by~$\tr(\taubf)$ its trace and by~$\dev\taubf$ its deviatoric part, i.e.,
\begin{equation*}
\tr(\taubf) := \sum_{i=1}^d \tau_{ii},
\qquad\text{ and }\qquad
\dev\taubf := \taubf - \frac1d \tr(\taubf) \Idbf,
\end{equation*}
where~$\Idbf$ is the identity matrix in~$\Rbbdd$.
Denoting by~$(\cdot)^T$ the transpose operator,
we further consider the space of $d \times d$ symmetric matrices
$\Sbb := \{\taubf \in \Rbbdd \, | \, \taubf = \taubf^T\}$.

For a generic subset~$\omega$ of~$\Omega$,
we denote the space of square integrable real-valued functions over~$\omega$ by $\Ltwo(\omega) = \Ltwo(\omega,\Rbb)$;
the vector- and tensor-valued counterparts of $\Ltwo(\omega,\Rbb)$ are denoted by
\begin{equation*}
\Ltwo(\omega,\Rbbd) := \{ \vbf : \omega \to \Rbbd \, | \, v_i \in \Ltwo(\omega) \},
\quad\text{and}\quad
\Ltwo(\omega,\Rbbdd) := \{ \taubf : \omega \to \Rbbdd \, | \, \tau_{ij} \in \Ltwo(\omega) \}.
\end{equation*}
We endow the above spaces with the standard inner product~$(\cdot,\cdot)_\omega$
and norm~$\Norm{\cdot}^2_{L^2(\omega)} := (\cdot,\cdot)_\omega$.
Such notations are extended in an obvious fashion to other function spaces as needed.
The subspace of functions in~$\Ltwo(\omega)$ with zero average over~$\omega$ is~$\Ltwozeroomega$.
We denote by~$\Pbbk(\omega)$
the space of polynomials of order smaller than or equal to~$k$;
if $k<0$,
we set
$\Pbbk(\omega):=\emptyset$.
Given a topological space~$X$,
we denote by~$X^*$ the dual space of~$X$ and by~$\langle \cdot, \cdot \rangle_{X}$
the duality pairing between~$X^*$ and~$X$.
We may omit the subscript~$X$ when it is clear from the context.

Abbreviating~$\partial/\partial x_i$ by~$\partial_i$,
we consider the following differential operators:
given a scalar function~$q$
and a vector-valued function~$\vbf = (v_1, v_2)$,
we define
\begin{gather*}
\nablabf q := (\partial_1 q, \partial_2 q)^T, \quad
\curlbf q := (\partial_2 q, -\partial_1 q)^T, \quad
\dive \vbf := \partial_1 v_1 + \partial_2 v_2, \quad
\rot \vbf := -\partial_2 v_1 + \partial_1 v_2.
\end{gather*}
For a vector-valued function~$\vbf$
and a tensor-valued function~$\taubf$,
the tensor- and the vector-valued
operators~$\nablabfun \vbf$ and~$\divebf \taubf$
are defined row-wise.
The symmetric gradient operator~$\epsbfun$ is given by
\begin{equation*}
\epsbfun(\vbf) := \frac12 (\nablabfun \vbf + (\nablabfun \vbf)^T).
\end{equation*}
Given a tensor-valued function~$\taubf$,
we denote the squared Frobenius norm by
$|\taubf|^2 := (\taubf : \taubf)$.

On each boundary of elements
or on a facet in~$\FcalhB$
with corresponding outer unit normal vector~$\nbf$,
we define the scalar normal and tangential components of a smooth vector-valued function~$\vbf: \Omega \to \Rbbd$ by
\begin{equation*}
\vbfn := \vbf \cdot \norm,
\qquad\text{and}\qquad
\vbft := \vbf  \cdot \tang;
\end{equation*}
the normal-normal and normal-tangential components of
a smooth tensor-valued function~$\taubf: \Omega \to \Rbbdd$,
\begin{equation*}
\taubfnn := \taubf : (\norm \otimes \norm) = \norm^T \taubf \norm,
\qquad\text{and}\qquad
\taubfnt := \taubf : {(\tang \otimes \norm)} = \tang^T \taubf \norm.
\end{equation*}

We employ standard notation for the Sobolev spaces~$H^s(\Omega)$, $s$ in $\Rbb$,
and write~$H^{-1}(\Omega)$ for the dual space of
$H^1_0(\Omega):= \{q \in H^1(\Omega) \, | \, q|_{\Gamma} = 0\}$.
We consider the space of square integrable vector fields with square integrable divergence
and its subspace of vector fields with vanishing normal trace on the boundary:
\begin{equation*}
\Hdiv := \{ \vbf \in \LtwoOmegad \, | \, \dive\vbf \in \LtwoOmega \},
\quad 
\text{and}
\quad
\Hzerodiv := \{ \vbf \in \Hdiv \, | \, \vbf \cdot \nbf|_{\Gamma} = 0 \},
\end{equation*}
respectively, which is well-defined due to the trace theorem for~$\Hdiv$;
see, e.g., \cite{Boffi-Brezzi-Fortin:2013}.
For this space,
we have the following topological identification~\cite[Thm.~2.1]{Gopalakrishnan-Lederer-Schoberl:2020}:
\begin{equation} \label{topological-identification}
\Hzerodiv^* =
H^{-1}(\curlbf,\Omega)
:= \{ \vbf \in H^{-1}(\Omega,\Rbb^2) \, | \, \rot \vbf \in H^{-1}(\Omega,\Rbb) \}.
\end{equation}
Following \cite{Gopalakrishnan-Lederer-Schoberl:2020},
we further consider the space~$\Hcurldivbf$
\begin{equation} \label{Hcurldiv-2d}
\begin{split}
\Hcurldivbf
&:=
\{ \taubf \in \Ltwo(\Omega,\Rbbdd) \, | \, \divebf\taubf \in  H_0(\dive,\Omega)^*\} \\
&\overset{\eqref{topological-identification}}{=}
\{ \taubf \in \Ltwo(\Omega,\Rbbdd) \, | \, \rot \divebf\taubf \in  H^{-1}(\Omega,\Rbb)\}.
\end{split}
\end{equation}

\begin{remark}
The notation $\curlbf$ in the definition of~$\Hcurldivbf$
is motivated by the three-dimensional counterpart of the space.
Indeed,
given a vector-valued function~$\vbf = (v_1, v_2, v_3)$,
we introduce the differential operators
\begin{equation*}
\dive \vbf :=\partial_1 v_1 + \partial_2 v_2 + \partial_3 v_3, \qquad
\curlbf \vbf := (\partial_2 v_3 - \partial_3 v_2, \partial_3 v_1 - \partial_1 v_3, \partial_1 v_2 - \partial_2 v_1)^T,
\end{equation*}
and, for a tensor-valued function~$\taubf$,
we define~$\divebf \taubf$ row-wise.
The three-dimensional counterpart of the space in~\eqref{Hcurldiv-2d} is then
\begin{equation*}
\Hcurldivbf := \{ \taubf \in \Ltwo(\Omega,\Rbbdd) \, | \, \curlbf \divebf\taubf \in  H^{-1}(\Omega,\Rbb^3)\}.
\end{equation*}
This provides the motivation for the notation $\Hcurldivbf$,
and although the present work is restricted to the two-dimensional setting,
we retain this notation for consistency.
\eremk
\end{remark}

The standard variational formulation of the Stokes problem~\eqref{strong-standard}; see, e.g.,~\cite{Girault-Raviart:1986},
reads as follows:
find $(\ubf, p)$ in $H^1_0(\Omega,\Rbbd) \times \LtwozeroOmega$ such that
\begin{subequations} \label{variational-standard}
\begin{alignat}{2}
(\nu \epsbfun(\ubf),\epsbfun(\vbf)) -(\dive\vbf,p) &= (\fbf,\vbf) \quad &&\forall \vbf \in H^1_0(\Omega,\Rbbd),\ \\
- (\dive\ubf,q) &= 0 \quad &&\forall q \in \LtwozeroOmega.
\end{alignat}
\end{subequations}
However,
we are interested in a mixed formulation of the Stokes problem~\eqref{strong-mixed}
where the stress tensor is treated as an independent variable
and the approximation space for the velocity~$\ubf$ is $H(\dive)$-conforming.
We consider the \emph{mass-conserving stress-yielding formulation with symmetric stresses} of the Stokes problem~\eqref{strong-mixed}
introduced in~\cite{Gopalakrishnan-Lederer-Schoberl:2020-b}.
Given the spaces
\begin{align*}
\Sigmabf := \{ \taubf \in \Hcurldivbf \cap \Ltwo(\Omega,\Sbb) \, | \, \tr(\taubf) = 0\},
\qquad
\Vbf      := \Hzerodiv,
\qquad
Q      := \LtwozeroOmega,
\end{align*}
and using the data~$\fbf$ and~$\nu$ introduced in Section~\ref{section:introduction},
we consider the following variational formulation as model problem:
find $(\sigmabf, \ubf, p)$ in $\Sigmabf \times \Vbf \times Q$ such that
\begin{subequations} \label{variational-formulation}
\begin{alignat}{2}
\numo( \dev\sigmabf,\dev\taubf) + \langle\divebf\taubf,\ubf\rangle_{\Hzerodiv}   &= 0             &&\quad\forall \taubf \in \Sigmabf,\ \\
\langle\divebf\sigmabf,\vbf\rangle_{\Hzerodiv} + (\dive\vbf,p)                   &= -(\fbf,\vbf)  &&\quad\forall \vbf \in \Vbf,\        \\
(\dive\ubf,q)                                                                    &= 0             &&\quad\forall q \in Q.
\end{alignat}
\end{subequations}
Comparing~\eqref{variational-formulation}
to the formulation of~\cite{Gopalakrishnan-Lederer-Schoberl:2020},
the stress space in the latter was set to $\widetilde{\Sigmabf} := \{ \taubf \in \Hcurldivbf \, | \, \tr(\taubf) = 0\}$,
since no symmetry constraint was imposed on the stress variable~$\widetilde{\sigmabf}$
that is there defined as $\nu\nablabfun \ubf$,
instead of $\nu\epsbfun(\ubf)$.

\subsection{Meshes, and broken Sobolev and polynomial spaces} \label{subsection:meshes}
Let~$\Tauh$ be a conforming, shape-regular, and quasi-uniform simplicial partition of the domain~$\Omega$.
For a given element~$\K$ in~$\Tauh$,
we denote by~$\nbfK$, $\tbfK$, $\bbfK$, and~$\hK$
its unit outward normal and tangential vectors,
its barycenter,
and its diameter, respectively;
the maximum of the diameters of all elements in~$\Tauh$ is denoted by~$h := \max_{\K \in \Tauh}\hK$.
For a given edge~$\F$ in~$\Fcalh$,
we denote by~$\nbfF := (\ncompo,\ncompt)$
a fixed once-and-for-all unit normal vector and
we fix~$\tbfF := (\tangone, \tangtwo) = (\ncompt, -\ncompo)$
as the  unit tangential vector
given by the right rotated normal vector.
The length of~$\F$ is denoted by~$\hF$.
The sets of edges and vertices of a given element~$\K$ in $\Tauh$
are denoted by~$\FcalK$ and~$\VcalK$;
we denote by~$\aKi$, $i=1,2,3$ the vertices in~$\VcalK$.
When no confusion occurs,
we shall omit the superscript~$\K$.

In what follows,
we employ a \emph{macroelement} approach for the discrete stress space
based on the Clough--Tocher (HCT) refinement \cite{Clough-Tocher:1965}:
given an element~$\K\in\Tauh$ with vertices
$\{\aone,\atwo,\athree\}$ and barycenter~$\bbfK$,
we define the subelements $\Ki$, $i=1,2,3$,
as
\begin{equation*}
\Ki := \conv\{\bbfK, \aj, \ak\},
\qquad
\{i,j,k\} = \{1,2,3\},
\end{equation*}
where~$\conv$ denotes the convex hull of a set of points.
We denote the resulting partition of~$\K$
by~$\CcalhK := \{\Kone,\Ktwo,\Kthree\}$.
By construction,
each subelement shares exactly one edge with~$\K$,
which is the edge of~$\K$ opposite to the vertex~$\ai$.
We define this edge as the \emph{external edge} of~$\Ki$,
and we denote it by~$\Fi$.
In particular,
to each fixed subelement~$\Ki$,
we can associate its external edge~$\Fi$ in~$\FcalK$.
The external edges will play a crucial role
in both the construction of the local stress spaces
and the stability analysis.
On an element~$\Ki$ in~$\CcalhK$,
we denote by~$\lambdaKi$ the barycentric coordinate
corresponding to the vertex~$\bbfK = \aKii$, i.e.,
the linear Lagrange function such that
$\lambdaKi(\aKij) = \delta_{ij}$,
$j=1,2,3$.
Note that~$\aKii$ is the vertex of~$\Ki$ opposite to the external edge~$\Fi$;
hence,
$\lambdaKi$ vanishes on~$\Fi$.
When no confusion occurs,
we replace~$\lambdaKi$
by~$\lambdai$.
We refer to Figure~\ref{fig:one-element-subdivision}
for a visual representation of the aforementioned split
and the corresponding notation.

\begin{figure}[htpb]
\centering
\begin{tikzpicture}[scale=2]
\begin{scope}
\coordinate (A) at (-1,0);
\coordinate (B) at (1,0);
\coordinate (C) at ($(0,{sqrt(3)})$);
\coordinate (b) at ($1/3*(A) + 1/3*(B) + 1/3*(C)$);

\draw[thick,] (A) -- (B) -- (C) -- cycle;
\draw[thick,] (A) -- (b) -- (B);
\draw[thick,] (C) -- (b);

\node[below] at (b) {~~$\bbfK$};
\node[below] at ($1/2*(A) + 1/2*(B)$) {$\F_{\K_3}$};
\node[above left] at ($1/2*(A) + 1/2*(C)$) {$\F_{\K_2}$};
\node[above right] at ($1/2*(C) + 1/2*(B)$) {$\F_{\K_1}$};

\node at ($1/3*(A) + 1/3*(B) + 1/3*(b)$) {$\K_3$};
\node at ($1/3*(A) + 1/3*(C) + 1/3*(b)$) {$\K_2$};
\node at ($1/3*(C) + 1/3*(B) + 1/3*(b)$) {$\K_1$};

\node[left] at (A) {$\aone$};
\node[right] at (B) {$\atwo$};
\node[above] at (C) {$\athree$};
\end{scope}
\begin{scope}[
  x={(0.5cm,0.25cm)},
  y={(-0.5cm,0.25cm)},
  z={(0cm,0.7cm)},
  xshift=4cm,
  yshift=.5cm,
  scale=2]
\coordinate (A) at (-1,0);
\coordinate (B) at (1,0);
\coordinate (C) at ($(0,{sqrt(3)})$);
\coordinate (b) at ($1/3*(A) + 1/3*(B) + 1/3*(C)$);

\coordinate (A0) at ($(A)+(0,0,0)$);
\coordinate (B0) at ($(B)+(0,0,0)$);
\coordinate (b1) at ($(b)+(0,0,0.5)$);

\filldraw[fill = orange!20] (A0) -- (B0) -- (b1) -- cycle;

\draw[dotted] (b) -- (b1);
\draw[thick,] (A) -- (B) -- (C) -- cycle;
\draw[thick,] (C) -- ($1/3*(C) + 1/3*(b)$);
\draw[thick,dashed] (b) -- ($1/2*(C) + 1/2*(b)$);
\draw[thick, dashed]  (b) -- (A);
\draw[thick,dashed] (B) -- (b);

\node[below right] at (b) {$\aKii$};
\node[above] at (C) {$\aKi$};
\node[below] at ($1/2*(A) + 1/2*(B)$) {$\Fi$};
\node[above right] at ($1/2*(b1) + 1/2*(B)$) {\large$\orange{\lambdai}$};

\end{scope}
\end{tikzpicture}
\caption{\footnotesize Subdivision of a macroelement into the three subelements~$\Ki \in \CcalhK$,
$i=1,2,3$ (left panel),
and visual representation of the barycentric coordinate~$\lambdai$ (right panel).}
\label{fig:one-element-subdivision}
\end{figure}

Applying this construction to every element of~$\Tauh$
yields a conforming, shape-regular, and quasi-uniform
simplicial partition~$\Ccalh$ of~$\Omega$,
which is a refinement of~$\Tauh$.
We denote by~$\Fcalhc$ the set of edges of~$\Ccalh$
and by $\Fcalhd := \Fcalhc \setminus \Fcalh$
the set of new edges introduced by the submesh.
In Figure~\ref{fig:mesh-subdivision},
we depict
an example of mesh~$\Tauh$,
its submesh~$\Ccalh$,
and the corresponding notation.
Throughout,
elements of both~$\Tauh$ and~$\Ccalh$ will appear simultaneously;
we reserve the notation~$\K$ for denoting a \emph{macro}element of~$\Tauh$,
whereas any additional subscript attached to~$\K$, e.g., $\Ki$,
will denote a \emph{sub}element of~$\Ccalh$.

\begin{figure}[tbp]
\centering
\begin{tikzpicture}[scale=1.8]
\begin{scope}[rotate=90]

\coordinate (A1) at (-1,0);
\coordinate (B1) at (1,0);
\coordinate (C1) at ($(0,{sqrt(3)})$);
\coordinate (D1) at ($(0,-{sqrt(3)})$);
\coordinate (bp1) at ($1/3*(A1) + 1/3*(B1) + 1/3*(C1)$);
\coordinate (bp2) at ($1/3*(A1) + 1/3*(B1) + 1/3*(D1)$);

\node[] at (bp1) {$\K$};
\node[] at (bp2) {$\K'$};

\draw (A1)--(B1)--(C1)--cycle;
\draw[thick,] (A1)--(D1)--(B1);
\draw[thick,] (A1)--(B1)--(C1)--cycle;

\coordinate (shift) at (0,-4);

\coordinate (A2) at ($(A1)+(shift)$);
\coordinate (B2) at ($(B1)+(shift)$);
\coordinate (C2) at ($(C1)+(shift)$);
\coordinate (D2) at ($(D1)+(shift)$);
\coordinate (b1) at ($1/3*(A2) + 1/3*(B2) + 1/3*(C2)$);
\coordinate (b2) at ($1/3*(A2) + 1/3*(B2) + 1/3*(D2)$);
\coordinate (bsub) at ($1/3*(A2) + 1/3*(b1) + 1/3*(C2)$);

\fill[cyan, opacity=0.2] (A2) -- (B2) -- (C2) -- cycle;
\draw[thick] (A2)--(D2)--(B2);

\draw[thick,] (A2) -- (B2) -- (C2) -- cycle;
\draw[thick,] (A2) -- (D2) -- (B2);
\draw[thick, dashed, red] (A2) -- (b2) -- (B2);
\draw[thick, dashed, red] (A2) -- (b1) -- (B2);
\draw[thick, dashed, red] (b1) -- (C2);
\draw[thick, dashed, red] (b2) -- (D2);

\node[] at (b1) {$\bullet$};
\node[] at (b2) {$\bullet$};
\node[above left] at (b1) {$\bbf_{K}$};
\node[above right] at (b2) {$\bbf_{K'}$};

\node[above left, cyan] at ($1/2*(C2) + 1/2*(B2)$) {$\Ccalh(\K)$};

\end{scope}
\end{tikzpicture}

\caption{\footnotesize
Example of a macroelement mesh~$\Tauh := \{\K, \K'\}$ (left panel)
and a corresponding submesh~$\Ccalh$ (right panel)
obtained by connecting the barycenters~$\bbf_{\K}$
and~$\bbf_{\K'}$ to the vertices of each element.
The new interior edges ($\red{\boldsymbol{- -}}$) are collected in~$\Fcalhd$.}
\label{fig:mesh-subdivision}
\end{figure} 

For non-negative integers~$s$ and~$k$,
we define the broken Sobolev and polynomial spaces associated with~$\Ccalh$ by
\begin{align*}
H^s(\Omega,\Ccalh) &:= \{ q \in \LtwoOmega \, | \, q|_{\K} \in H^s(\K)\,\, \forall\K\in\Ccalh \}, \\
\Pbbk(\Omega,\Ccalh) &:= \{ q \in \LtwoOmega \, | \, q|_{\K} \in \Pbbk(\K)\,\, \forall\K\in\Ccalh \}.
\end{align*}
Analogous definitions apply to the mesh~$\Tauh$,
and for vector- and tensor-valued functions.

For positive $A$ and $B$,
we shall denote with
$A\lesssim B$ and $A \gtrsim B$
the existence of positive constants~$C$ and~$c$
independent of the mesh size~$h$ and the viscosity~$\nu$,
such that
$c A \leq B$ and $B \leq C A$, respectively;
if both $A\lesssim B$ and $A \gtrsim B$ hold,
we use $A \simeq B$.

Given an edge~$\F$ in~$\Fcalh$,
we denote by~$\jump{\cdot}_{\F}$ the jump operator across~$\F$,
which coincides with the identity operator if~$\F$ belongs to~$\FcalhB$.
When no confusion occurs, we shall omit the subscript~$\F$,
and write~$\jump{\cdot}$ instead.
Given an element~$\K$ in~$\Tauh$ (resp. $\Ccalh$)
and an edge~$\F$ in~$\FcalK$,
then,
for all~$\qh$ in~$\Pbbk(\Tauh)$ (resp. $\Pbbk(\Ccalh)$),
the following discrete trace inequality holds true~\cite[Lem.~1.46]{DiPietro-Ern:2012}:
\begin{equation} \label{trace-inequality}
\Norm{\qh}_{\Ltwo(\F)} \lesssim \hK^{-\frac12} \Norm{\qh}_{\Ltwo(\K)}.
\end{equation}
Let~$\Fj$ be fixed as one of the two edges of an element~$\Ki$ in~$\CcalhK$
that is not the external edge~$\Fi$,
and let~$\lambda_j$
denote the barycentric coordinate associated with
the vertex of~$\Ki$ opposite to~$\Fj$.
Consider now the constant extension operator
$\Lcalk:\Pbbk(\Fi) \to \Pbbk(\Ki)$
defined by
extending functions constantly along the~$\lambdaj$-direction, i.e.,
given a point~$\xbf$ in~$\Ki$,
\begin{equation} \label{Lcal}
\Lcalk(\qkF)(\xbf) := \qkF(\xbf_{\Fi}),
\end{equation}
where $\xbf_{\Fi}$ in $\Fi$
is the unique point satisfying
 $\lambdaj(\xbf_{\Fi}) = \lambdaj(\xbf)$.
Then, a standard scaling argument yields
the following stability estimate for the extension operator~$\Lcalk$:
\begin{equation} \label{stability-Lcal}
\Norm{\Lcalk(\qkF)}_{\Ltwo(\Ki)}
\lesssim \hKi^{\frac12} \Norm{\qkF}_{\Ltwo(\Fi)}.
\end{equation}
We refer to Figure~\ref{fig:extansion-opeartor}
for a graphic representation of such an extension operator.
On each edge $\F$ in $\Fcalh$
with corresponding normal vector~$\nbfF$,
let~$\PizF$ be the~$\Ltwo$-orthogonal projection
with respect to the inner product~$(\cdot,\cdot)_{\Ltwo(\F)}$
into the space of constant vector fields.

\begin{figure}[htbp]
\centering
\begin{tikzpicture}[scale=2]
\begin{scope}[
  x={(0.5cm,0.25cm)},
  y={(-0.5cm,0.25cm)},
  z={(0cm,0.3cm)},
  xshift=4cm,
  yshift=.5cm,
  scale=2]
\coordinate (A) at (-1,0);
\coordinate (B) at (1,0);
\coordinate (C) at ($(0,{sqrt(3)})$);
\coordinate (b) at ($1/3*(A) + 1/3*(B) + 1/3*(C)$);

\coordinate (A0) at ($(A)+(0,0,0)$);
\coordinate (B0) at ($(B)+(0,0,0)$);
\coordinate (b1) at ($(b)+(0,0,0.5)$);

\draw[thick,] (A) -- (B) -- (C) -- cycle;
\draw[] (B) -- (b) -- (C);
\node[left] at ($1/2*(A) + 1/2*(b)$) {$\orange{\Fj}$};

\node[below] at ($0.75*(A) + 0.25*(B) + (0,0,-0.3)$) {$\blue{\qkF}$};

\node[right] at ($0.15*(A) + 0.85*(B) + (0,+0.5,0.4)$) {$\cyan{\Lcalk(\qkF)}$};

\foreach \i in {0,...,19}{
    \pgfmathsetmacro{\ta}{\i/20}
    \pgfmathsetmacro{\tb}{(\i+1)/20}
    \pgfmathsetmacro{\ha}{-6*\ta*(\ta-0.4)*(\ta-1)}
    \pgfmathsetmacro{\hb}{-6*\tb*(\tb-0.4)*(\tb-1)}
    \fill[cyan!30,opacity=0.7]
        ({-1+2*\ta},{0},{\ha}) --
        ({-1+2*\tb},{0},{\hb}) --
        ({\tb},{0.57735*(1-\tb)},{\hb}) --
        ({\ta},{0.57735*(1-\ta)},{\ha}) -- cycle;
}

        \foreach \t in {0.2,0.4,0.6,0.8,1}{
            \pgfmathsetmacro{\hh}{-6*\t*(\t-0.4)*(\t-1)}
            \draw[orange,thick, dashed]
                ({-1+2*\t},{0},{\hh}) -- ({\t},{0.57735*(1-\t)},{\hh});
        }



 \draw[ultra thick,blue]
            plot[domain=0:1,samples=30,variable=\t]
            ({-1+2*\t},{0},{-6*\t*(\t-0.4)*(\t-1)});
\draw[orange, ultra thick] (A) -- (b);
\draw[thick] (A) -- (B);

\end{scope}
\end{tikzpicture}
\caption{\footnotesize Graphic representation of the constant extension operator~$\Lcalk$ of~\eqref{Lcal}.}
\label{fig:extansion-opeartor}
\end{figure}

\section{A symmetric MCS method} \label{section:MCS-symmetric}
We are now in a position to introduce a discrete formulation of~\eqref{variational-formulation}.
In particular,
the discrete stress finite element space
is constructed on the Clough--Tocher submesh
introduced in the previous section.

For an integer~$k \geq 1$,
we define the following spaces
\begin{subequations}  \label{discrete-spaces}
\begin{equation}
\Sigmahkmo := \{ \taubfh \in \Pbbkmo(\Ccalh,\Sbb) \, | \, \tr(\taubfh) = 0, \jump{(\taubfh)_{\normtang}}_{\F} = 0,\, \forall \F \in \Fcalh \},
\end{equation}
\begin{equation}
\Vhk     := \{ \vbfh \in \Pbbk(\Tauh, \Rbbd) \, | \, \jump{(\vbfh)_{\norm}}_{\F} = 0,\,  \forall \F \in \Fcalh \} = \Pbbk(\Tauh, \Rbbd) \cap \Vbf,
\end{equation}
\begin{equation}
\Qhkmo     := \Pbbkmo(\Tauh) \cap Q.
\end{equation}
\end{subequations}
The velocity and pressure spaces are chosen as in~\cite{Gopalakrishnan-Lederer-Schoberl:2020}, i.e.,
Brezzi--Douglas--Marini ($\BDM$) \cite{Boffi-Brezzi-Fortin:2013} finite elements of order $k$,
and discontinuous piecewise polynomials of degree $k-1$, respectively,
both defined on the mesh~$\Tauh$.
A key property of this pair of spaces is the relation
\begin{equation*}
\dive \Vhk = \Qhkmo, 
\end{equation*}
which yields the exact divergence-free property of the discrete velocity field:
\begin{equation} \label{exact-divergence-free}
(\dive \ubfh, \qh) = 0 \text{ for all } \qh \in \Qhkmo \iff \dive \ubfh = 0 \quad \text{in } \Omega.
\end{equation}
The choice of the discrete stress space, however,
differs from that adopted in~\cite{Gopalakrishnan-Lederer-Schoberl:2020-b}.
Here,
we consider piecewise tensor-valued polynomials of degree $k-1$
on the \emph{sub}mesh~$\Ccalh$,
which are \emph{symmetric} and trace-free.
Notice that the normal-tangential continuity of the stress space~$\Sigmahkmo$
is enforced only across the external edges $\Fi$ of~$\Fcalh$,
whereas no continuity is imposed across
the interior Clough--Tocher edges in~$\Fcalhd$
(i.e., across the edges denoted by $({\color{red}- -})$ in Figure~\ref{fig:mesh-subdivision}).
As discussed in~\cite[Sect.~4]{Gopalakrishnan-Lederer-Schoberl:2020},
requiring normal-tangential continuity
yields a ``slightly'' nonconforming space;
to achieve $H(\curlbf\divebf)$-conformity,
additional (and restrictive)
regularity assumptions on the stress should be imposed.
Finally,
we point out that both the discrete stresses on the interior
and their traces on the edges
are polynomials of degree $k-1$,
in contrast to the original MCS stress space,
where interior bubbles of order~$k$ were required
to guarantee stability.

To formulate the discrete problem,
in accordance with the symmetric MCS discrete formulation of~\cite{Gopalakrishnan-Lederer-Schoberl:2020-b},
we define the following bilinear forms:
\begin{itemize}
\item the bilinear form $a:\Ltwo(\Omega,\Sbb)\times\Ltwo(\Omega,\Sbb) \to \Rbb$ defined by
\begin{equation} \label{a}
a(\sigmabf,\taubf) := (\numo \dev\sigmabf, \dev\taubf );
\end{equation}
\item the bilinear form $\bone: \Vbf \times Q \to \Rbb$ defined by
\begin{equation} \label{bone}
\bone(\ubf,q) := (\dive\ubf,q);
\end{equation}
\item the bilinear form
$\btwo : \{ \taubf \in H^1(\Ccalh, \Rbbdd) \, | \, \jump{\taubf_{\normtang}} = 0\}
\times \{\vbf \in H^1(\Tauh,\Rbbd) \, | \, \jump{\vbf_{\norm}} = 0 \}
\to \Rbb$
defined by
\begin{equation} \label{btwo}
\btwo(\taubf,\ubf)
:= - \sum_{\K \in \Tauh} \sum_{i=1}^3 \int_{\Ki} \taubf : \epsbfun(\ubf)\dx 
+ \sum_{\F \in \Fcalh} \int_{\F} \taubf_{\normtang} \jump{\ubf_t} \ds.
\end{equation}
\end{itemize}
The discrete formulation reads as follows:
find $(\sigmabfh, \ubfh,\ph)$ in $\Sigmahkmo \times \Vhk \times \Qhkmo$ such that
\begin{subequations} \label{variational-formulation:discrete}
\begin{alignat}{2}
a(\sigmabfh,\taubfh) + \btwo(\taubfh, \ubfh) &= 0           \quad&&\forall \taubfh \in \Sigmahkmo, \\
\btwo(\sigmabfh, \vbfh) + \bone(\vbfh, \ph)        &= -(\fbf,\vbfh)  \quad&&\forall \vbfh \in \Vhk, \\
\bone(\ubfh, \qh)                               &= 0          \quad&&\forall \qh \in \Qhkmo.
\end{alignat}
\end{subequations}

The remainder of this section is devoted to the study of the discrete stress space~$\Sigmahkmo$,
and it is organized as follows.
In Section~\ref{subsection:dofs},
we introduce a unisolvent set of degrees of freedom for the corresponding local space
defined on a \emph{sub}element~$\Ki$,
which is the local stress discretization space,
whereas,
in Section~\ref{subsection:macro-local-space},
we discuss some properties of
the local space defined on a \emph{macro}element~$\K$,
which will be useful for the analysis of the method.
A hybrid version of
\eqref{variational-formulation:discrete}
is briefly discussed in Section~\ref{subsection:hybrid},
together with its relation to the original formulation
in terms of computational cost.

\subsection{Local stress space and degrees of freedom} \label{subsection:dofs}
Here,
we introduce a set of unisolvent degrees of freedom (DoFs)
for the global stress space~$\Sigmahkmo$.
With this aim,
we first construct a set of unisolvent degrees of freedom
for the corresponding local discrete stress space~$\Sigmahkmo(\Ki)$
defined below.

Let~$\K$ be an element of~$\Tauh$
and denote by~$\Ki$, $i=1,2,3$,
the elements in~$\CcalhK$.
We consider the local stress space
\begin{equation*}
\begin{split}
\Sigmahkmo(\Ki) := \{ \taubfh \in \Pbbkmo(\Ki,\Sbb)
\, | \,  \tr(\taubfh) = 0 \}.
\end{split}
\end{equation*}
The symmetry and trace-free constraints
imply that
the dimension of such a space can be computed as follows:
\begin{equation} \label{dim-sigmakmo}
\dim \Sigmahkmo(\Ki)
= \dim \Pbbkmo(\Ki,\Rbbdd)- 2\dim\Pbbkmo(\Ki)
= 2\dim\Pbbkmo(\Ki) = k(k+1).
\end{equation}
Let~$\Fi$ be the external edge of~$\Ki$,
and consider
the associated unit normal and tangential vectors,
which we recall have components
$(\ncompo,\ncompt)$
and
$(\tcompo,\tcompt) = (\ncompt,-\ncompo)$,
respectively.
We preliminarily collect some identities
involving the tensors
$\dev(\normi\otimes\normi)$
and
$\sym(\normi\otimes\tangi)$,
which will be used repeatedly in the sequel.
Their explicit matrix representations are:
\begin{equation*}
\begin{split}
\dev(\normi\otimes\normi)
&= \dev
\begin{pmatrix}
    \ncompo\ncompo  & \ncompo\ncompt   \\
    \ncompo\ncompt  & \ncompt\ncompt   
\end{pmatrix}
= \begin{pmatrix}
    \ncompo^2 - \frac12 & \ncompo\ncompt   \\
    \ncompo\ncompt  & \ncompt^2 - \frac12  
\end{pmatrix}
\end{split}
\end{equation*}
and
\begin{equation*}
\begin{split}
\sym(\normi\otimes\tangi)
&= \sym
\begin{pmatrix}
    \ncompo\tcompo  & \ncompo\tcompt   \\
    \ncompt\tcompo  & \ncompt\tcompt   
\end{pmatrix} 
= \begin{pmatrix}
    \ncompo\ncompt  & \frac12 (\ncompt^2 -\ncompo^2)   \\
    \frac12 ( \ncompt^2 - \ncompo^2)  & -\ncompt\ncompo
\end{pmatrix};
\end{split}
\end{equation*}
notice that in particular both tensors are symmetric and trace-free.
A direct computation shows that
their squared Frobenius norms
are
\begin{equation} \label{dev:frob-norm}
\begin{split}
|\dev(\normi\otimes\normi)|^2
&= \dev(\normi\otimes\normi):\dev(\normi\otimes\normi) \\
&=  (\ncompo^2 - \frac12)^2 + 2\ncompo^2\ncompt^2 + (\ncompt^2 - \frac12)^2
=  (\ncompo^2 + \ncompt^2)^2 - \ncompo^2 - \ncompt^2 + \frac12 = \frac12 
\end{split}
\end{equation}
and
\begin{equation} \label{sym:frob-norm}
\begin{split}
|\sym(\normi\otimes\tangi)|^2
&=\sym(\normi\otimes\tangi):\sym(\normi\otimes\tangi) \\
&=  2\ncompo^2\ncompt^2 + 2 \big(\frac12 (\ncompt^2 -\ncompo^2)\big)^2   
=  \frac12\big(\ncompo^2 + \ncompt^2\big)^2 = \frac12.
\end{split}
\end{equation}
Since $\sym(\normi\otimes\tangi)$ is symmetric,
we have
\begin{equation*}
\sym(\normi\otimes\tangi):\sym(\normi\otimes\tangi)
=\sym(\normi\otimes\tangi) : (\normi\otimes\tangi)
= \tangi^T \sym(\normi\otimes\tangi) \normi,
\end{equation*}
which,
combined with~\eqref{sym:frob-norm},
yields 
\begin{equation*} \label{nt-component-sym}
    (\sym(\normi\otimes\tangi))_{\normtang} = \frac12.
\end{equation*}
Moreover,
these two tensors satisfy the following orthogonality property:
\begin{equation} \label{orthogonality:dev-sym}
\begin{split}
\dev(\normi\otimes\normi) : \sym(\normi\otimes\tangi)
&= \dev(\normi\otimes\normi) : (\normi\otimes\tangi) \\
&= (\normi\otimes\normi) : (\normi\otimes\tangi)
    - \frac{1}{2}\tr(\normi\otimes\normi)\Idbf:(\normi\otimes\tangi) \\
&= (\normi\cdot\normi)(\normi\cdot \tangi) - \frac{1}{2}(\normi\cdot \tangi)
= 0.
\end{split}
\end{equation}
Since~$\dev(\normi\otimes\normi)$ is symmetric,
we have
\begin{equation*}
\dev(\normi\otimes\normi) : \sym(\normi\otimes\tangi)
= \dev(\normi\otimes\normi) : (\normi\otimes\tangi)
= \tangi^T \dev(\normi\otimes\normi) \normi,
\end{equation*}
which,
combined with~\eqref{orthogonality:dev-sym},
yields
\begin{equation*} \label{nt-component-dev}
(\dev(\normi\otimes\normi))_{\normtang} = 0.
\end{equation*}

The next result shows that these two tensors
form a basis for the lowest-order local discrete stress space~$\Sigmahz(\Ki)$
and
that they are involved in a representation of arbitrary order stress tensors
that will be crucial throughout the analysis.
\begin{lemma} \label{lemma:dev-sym-basis-Ki}
Let~$\Ki$ be an element of~$\Ccalh$.
Then,
the set $\{\dev(\normi\otimes\normi),\sym(\normi\otimes\tangi)\}$
is a basis for~$\Sigmahz(\Ki)$.
In particular,
for any given~$\taubfh$ in~$\Sigmahkmo(\Ki)$,
there exist polynomials~$\rkmoi$ and~$\skmoi$ in~$\Pbbkmo(\Ki)$
such that~$\taubfh$ admits the following representation:
\begin{equation*}
\taubfh = \rkmoi\dev(\normi\otimes\normi) + \skmoi\sym(\normi\otimes\tangi).
\end{equation*}
\end{lemma}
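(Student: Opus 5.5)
The plan is to reduce everything to the pointwise (constant-coefficient) statement and then lift it to polynomial coefficients componentwise. First I identify $\Sigmahz(\Ki)$ with the space of constant, symmetric, trace-free $2\times 2$ matrices. By~\eqref{dim-sigmakmo} with $k=1$, this space has dimension $2$.

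Both $\dev(\normi\otimes\normi)$ and $\sym(\normi\otimes\tangi)$ were shown above to be symmetric and trace-free, and they are constant on $\Ki$. Hence they belong to $\Sigmahz(\Ki)$. Their Frobenius norms equal $1/2$ by~\eqref{dev:frob-norm} and~\eqref{sym:frob-norm}, so both are nonzero. By the orthogonality relation~\eqref{orthogonality:dev-sym}, they are orthogonal with respect to the Frobenius inner product. Two nonzero orthogonal vectors are linearly independent, and since the space is two-dimensional they form a basis. This proves the first claim.

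For the representation, take $\taubfh\in\Sigmahkmo(\Ki)$. At each point $\xbf\in\Ki$, the matrix $\taubfh(\xbf)$ is symmetric and trace-free, so it lies in the span just established. Because the basis is orthogonal, the coefficients are explicit:
\[
\rkmoi = 2\,\taubfh:\dev(\normi\otimes\normi),
\qquad
\skmoi = 2\,\taubfh:\sym(\normi\otimes\tangi),
\]
where the factor $2$ is the reciprocal of the squared norms $1/2$. Each coefficient is a Frobenius product of $\taubfh$, whose entries lie in $\Pbbkmo(\Ki)$, with a constant matrix. It is therefore a fixed linear combination of the entries of $\taubfh$, and so $\rkmoi,\skmoi\in\Pbbkmo(\Ki)$.

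Finally, I verify the identity $\taubfh=\rkmoi\dev(\normi\otimes\normi)+\skmoi\sym(\normi\otimes\tangi)$ pointwise. This is the orthogonal expansion in the basis, and it holds because $\taubfh(\xbf)$ lies in the span at every point.

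There is no real obstacle. The only point requiring care is that the dimension count gives exactly $2$ for the constant symmetric trace-free space. One can also see this directly, without~\eqref{dim-sigmakmo}: such a matrix is determined by the two free entries $\tau_{11}=-\tau_{22}$ and $\tau_{12}=\tau_{21}$.
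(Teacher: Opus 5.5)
Your proof is correct and follows the same route as the paper. Both arguments get linear independence from the orthogonality \eqref{orthogonality:dev-sym} and $\dim\Sigmahz(\Ki)=2$, and then the polynomial-coefficient representation, which the paper obtains by writing $\Sigmahkmo(\Ki)=\Pbbkmo(\Ki)\otimes\Sigmahz(\Ki)$ and which your explicit formulas $\rkmoi = 2\,\taubfh:\dev(\normi\otimes\normi)$, $\skmoi = 2\,\taubfh:\sym(\normi\otimes\tangi)$ justify directly. One small slip: it is the squared Frobenius norms that equal $1/2$ (the norms are $1/\sqrt{2}$), which is what you correctly use later for the factor $2$.
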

\begin{proof}
Since the set $\{\dev(\normi\otimes\normi),\sym(\normi\otimes\tangi)\}$
has two elements,
it suffices to show that they are linearly independent,
which directly follows from
the orthogonality property~\eqref{orthogonality:dev-sym}.
Using that
$\{\dev(\normi\otimes\normi),\sym(\normi\otimes\tangi)\}$
is a basis for~$\Sigmahz(\Ki)$,
writing
$\Sigmahkmo(\Ki) = \Pbbkmo(\Ki)\otimes\Sigmahz(\Ki)$
yields the desired decomposition.
\end{proof}

We now aim at introducing a set of degrees of freedom
for~$\Sigmahkmo(\Ki)$.
For a given~$\Ki$ in~$\Ccalh$
with external edge~$\Fi$,
consider the following three groups of linear functionals:
\begin{subequations} \label{dofs}
\begin{itemize}
\item
edge moments~$\psiEi(\taubfh)$ on~$\Fi$
associated with the normal-tangential component,
up to order~$k-1$:
    \begin{equation} \label{dofs:edge}
    \psiEi(\taubfh):= \Big\{
            \int_{\Fi} (\taubfh)_{\normtang}  \piF \ds,\,
            \forall \piF \in \Pbbkmo(\Fi)
                    \Big\};
    \end{equation}
\item bulk moments~$\psiKio(\taubfh)$
associated with the deviatoric component,
up to order~$k-1$:
    \begin{equation} \label{dofs:bulk-k}
    \psiKio(\taubfh) := \Big\{ \int_{\Ki} \taubfh : \dev(\normi\otimes\normi) \qi \dx,\,
    \forall\qi \in \Pbbkmo(\Ki) \Big\};
    \end{equation}
\item bulk moments~$\psiKit(\taubfh)$
associated with the normal-tangential component weighted by the barycenter coordinate,
up to order~$k-2$:
    \begin{equation} \label{dofs:bulk-kmo-lambda}
    \psiKit(\taubfh) := \Big\{ \int_{\Ki} \taubfh :
    \sym(\normi\otimes\tangi)
    \wi \lambdai \dx,\,
    \forall\wi \in \Pbbkmt(\Ki) \Big\};
\end{equation}
\end{itemize}
\end{subequations}

\begin{remark}
A natural alternative choice for the degrees of freedom
for the local discrete stress space~$\Sigmahkmo(\Ki)$
would be  to consider only two families of bulk moments
of degree up to order~$k-1$,
associated with the normal-normal
and normal-tangential components, respectively, i.e.,
\begin{subequations}
\begin{equation}
\int_{\Ki} \taubfh : \dev(\normi\otimes\normi) \qi \dx\qquad\qquad
    \forall\qi \in \Pbbkmo(\Ki),
\end{equation}
and
\begin{equation} \label{dofs:natural-bulk}
\int_{\Ki} \taubfh : \sym(\normi\otimes\tangi) \qi \dx\qquad\qquad
    \forall\qi \in \Pbbkmo(\Ki).
\end{equation}
\end{subequations}
The unisolvence of these functionals follows immediately
from the same arguments used in the proof of Lemma~\ref{lemma:unisolvence},
since $\dev(\normi\otimes\normi)$ and $\sym(\normi\otimes\tangi)$ form a basis for~$\Sigmahz(\Ki)$.
However,
we decompose the second family of bulk functionals~\eqref{dofs:natural-bulk}
into the two groups~\eqref{dofs:bulk-k} and~\eqref{dofs:bulk-kmo-lambda}
because the second term on the right-hand side in the definition of~$\btwo$ in~\eqref{btwo}
requires control of the normal-tangential component on the edges.
\eremk
\end{remark}

Throughout,
we will frequently use the following polynomial decomposition,
which allows us to split a polynomial on~$\Ki$
into an (external) edge and a bulk contribution.

\begin{lemma} \label{lemma:decomposition-polynomial-spaces}

The space of polynomials~$\Pbbkmo(\Ki)$ admits the following decomposition
\begin{equation} \label{decomposition-polynomial-spaces}
\Pbbkmo(\Ki) = {\Lcalkmo(\Pbbkmo(\Fi))}\oplus\lambdai\Pbbkmt(\Ki).
\end{equation}
\end{lemma}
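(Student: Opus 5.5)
The plan is to prove the decomposition in three steps: check that both summands lie in $\Pbbkmo(\Ki)$, show that their sum is direct, and conclude equality by counting dimensions.

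\emph{Inclusion.} The extension operator $\Lcalkmo$ maps $\Pbbkmo(\Fi)$ into $\Pbbkmo(\Ki)$. To see this, use affine coordinates on $\Ki$, say $(\lambdaj,\lambdai)$ with $\lambdaj$ the barycentric coordinate fixed in \eqref{Lcal}. On $\Fi$ we have $\lambdai=0$, so $\lambdaj$ restricted to $\Fi$ is an affine parametrisation of the edge. Any $\qkmoF\in\Pbbkmo(\Fi)$ can therefore be written as $\qkmoF=P(\lambdaj|_{\Fi})$ for a univariate polynomial $P$ of degree at most $k-1$. By definition \eqref{Lcal}, $\Lcalkmo(\qkmoF)(\xbf)=P(\lambdaj(\xbf))$, which is a polynomial of degree at most $k-1$ on $\Ki$. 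Trivially, $\lambdai\Pbbkmt(\Ki)\subset\Pbbkmo(\Ki)$.

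\emph{Directness.} Take $q\in\Lcalkmo(\Pbbkmo(\Fi))\cap\lambdai\Pbbkmt(\Ki)$. Since $\lambdai$ vanishes on $\Fi$, the restriction of $q$ to $\Fi$ is zero. On the other hand, $\Lcalkmo(\qkmoF)|_{\Fi}=\qkmoF$, because $\xbf_{\Fi}=\xbf$ for $\xbf\in\Fi$. Hence $q=\Lcalkmo(0)=0$.

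\emph{Dimension count.} The operator $\Lcalkmo$ is injective, since restricting to $\Fi$ inverts it. So
\[
\dim\Lcalkmo(\Pbbkmo(\Fi))+\dim\lambdai\Pbbkmt(\Ki)=k+\frac{(k-1)k}{2}=\frac{k(k+1)}{2}=\dim\Pbbkmo(\Ki).
\]
The sum is direct, so equality of dimensions gives \eqref{decomposition-polynomial-spaces}. For $k=1$ the second summand is $\{0\}$ by the convention $\Pbb_{-1}:=\emptyset$, and the statement reduces to the constants.

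As an alternative to counting, one can argue constructively. Given $q\in\Pbbkmo(\Ki)$, set $\qkmoF:=q|_{\Fi}$. Then $q-\Lcalkmo(\qkmoF)$ vanishes on the line $\{\lambdai=0\}$, so it is divisible by $\lambdai$, with a quotient of degree at most $k-2$.

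The only step requiring care is the first one: verifying that the constant extension along the $\lambdaj$-level sets is genuinely polynomial. This rests on $\lambdaj$ being affine and nonconstant on $\Fi$, which holds because $\Fj\neq\Fi$.
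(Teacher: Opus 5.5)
Your proof is correct and follows essentially the same route as the paper. You establish directness by restricting to $\Fi$, where $\lambdai$ vanishes and $\Lcalkmo$ is a right inverse of the trace, and then conclude with the dimension count $k+(k-1)k/2=k(k+1)/2$. Your explicit check that $\Lcalkmo$ really produces polynomials of degree at most $k-1$ (which the paper calls trivial) and your constructive divisibility argument are welcome additions, but they do not change the approach.
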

\begin{proof}
We first show that the sum on the right-hand side of~\eqref{decomposition-polynomial-spaces} is direct:
let~$\pkmo$ belong to
$\Lcalkmo(\Pbbkmo(\Fi))\cap\lambdai\Pbbkmt(\Ki)$.
Then,
there exist~$\qkmoF$ in~$\Pbbkmo(\Fi)$
and~$\qkmt$ in~$\Pbbkmt(\Ki)$ such that
$\pkmo = \Lcalkmo(\qkmoF) = \lambdai\qkmt$.
Restricting the second identity to~$\Fi$
and using the definition of~$\Lcalkmo$
together with the fact that~$\lambdai$ vanishes on~$\Fi$,
we obtain
\begin{equation*}
\qkmo|_\Fi  = \Lcalkmo(\qkmoF){|_\Fi} = (\lambdai\qkmt)|_\Fi = 0.
\end{equation*}
Hence~$\pkmo = \Lcalkmo(0) = 0$,
the sum is direct,
and we obtain
\begin{equation*}
\dim(\Lcalkmo(\Pbbkmo(\Fi))\oplus\lambdai\Pbbkmt(\Ki))
= \dim(\Lcalkmo(\Pbbkmo(\Fi)))+\dim(\lambdai\Pbbkmt(\Ki)).
\end{equation*}
It remains to show that the sum space has the same dimension as~$\Pbbkmo(\Ki)$. By construction,
the extension operator~$\Lcalkmo$ of~\eqref{Lcal}
is injective,
since it is a right inverse of the trace map from~$\Ki$ to~$\Fi$.
Hence,
$\dim\Lcalkmo(\Pbbkmo(\Fi)) = \dim\Pbbkmo(\Fi)$.
Moreover,
$
\dim(\Pbbkmo(\Ki))
= {k(k+1)}/{2}
= k + {(k-1)k}/{2}.
$
Since both spaces in the direct sum are trivially contained in~$\Pbbkmo(\Ki)$,
the sum space is a subspace of~$\Pbbkmo(\Ki)$ with the same dimension.
Hence,
\eqref{decomposition-polynomial-spaces} follows.
\end{proof}

In particular,
Lemma~\ref{lemma:decomposition-polynomial-spaces} allows us to decompose the coefficient~$\skmoi$ of the normal-tangential component of a tensor,
introduced in the representation of Lemma~\ref{lemma:dev-sym-basis-Ki},
into an edge contribution and a bulk contribution, i.e.,
\begin{equation*}
\skmoi = \Lcalkmo(\skmoFi) + \lambdai \stildekmti,
\end{equation*}
where $\skmoFi$ is in~$\Pbbkmo(\Fi)$ and~$\stildekmti$ is in~$\Pbbkmt(\Ki)$.
These two terms naturally correspond to
the two groups of bulk functionals~\eqref{dofs:bulk-k} and~\eqref{dofs:bulk-kmo-lambda},
thereby allowing us to control the edge and interior contributions separately.
This additional structure will be crucial
for establishing both the stability properties and the optimal error estimates.

In the next result we show that the functionals in~\eqref{dofs}
can be taken as unisolvent set of degrees of freedom for
the local discrete stress space~$\Sigmahkmo(\Ki)$.
\begin{lemma} \label{lemma:unisolvence}
Let~$\Ki$ be an element of~$\Ccalh$.
Then,
the set of functionals $\psiEi$, $\psiKio$, and~$\psiKit$ of~\eqref{dofs}
is a unisolvent set of degrees of freedom for the space~$\Sigmahkmo(\Ki)$.
\end{lemma}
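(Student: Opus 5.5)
Since the functionals are linear, the plan is the standard one: count dimensions, then show that a $\taubfh\in\Sigmahkmo(\Ki)$ with all functionals in~\eqref{dofs} vanishing is zero. The edge group~\eqref{dofs:edge} contributes $\dim\Pbbkmo(\Fi)=k$ functionals. The deviatoric group~\eqref{dofs:bulk-k} contributes $\dim\Pbbkmo(\Ki)=k(k+1)/2$, and the weighted group~\eqref{dofs:bulk-kmo-lambda} contributes $\dim\Pbbkmt(\Ki)=(k-1)k/2$. The total is $k+k(k+1)/2+k(k-1)/2=k(k+1)$, which equals $\dim\Sigmahkmo(\Ki)$ by~\eqref{dim-sigmakmo}. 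The same count, $k+(k-1)k/2=\dim\Pbbkmo(\Ki)$, is what appears in Lemma~\ref{lemma:decomposition-polynomial-spaces}.

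For injectivity, write $\taubfh=\rkmoi\dev(\normi\otimes\normi)+\skmoi\sym(\normi\otimes\tangi)$ by Lemma~\ref{lemma:dev-sym-basis-Ki}.

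First, use the orthogonality~\eqref{orthogonality:dev-sym} and the norm identity~\eqref{dev:frob-norm} to get $\taubfh:\dev(\normi\otimes\normi)=\tfrac12\rkmoi$. Testing~\eqref{dofs:bulk-k} with $\qi=\rkmoi$ then gives $\tfrac12\Norm{\rkmoi}^2_{\Ltwo(\Ki)}=0$, so $\rkmoi=0$.

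Next, on $\Fi$ the normal and tangent are constant, so the normal-tangential component there is $(\taubfh)_{\normtang}=\tfrac12\skmoi|_{\Fi}$, using $(\dev(\normi\otimes\normi))_{\normtang}=0$ and $(\sym(\normi\otimes\tangi))_{\normtang}=\tfrac12$. Decompose $\skmoi=\Lcalkmo(\skmoFi)+\lambdai\stildekmti$ as in Lemma~\ref{lemma:decomposition-polynomial-spaces}. Since $\lambdai$ vanishes on $\Fi$ and $\Lcalkmo$ is a right inverse of the trace, $\skmoi|_{\Fi}=\skmoFi$. Testing~\eqref{dofs:edge} with $\piF=\skmoFi$ gives $\skmoFi=0$, so $\skmoi=\lambdai\stildekmti$.

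Finally, by~\eqref{sym:frob-norm}, $\taubfh:\sym(\normi\otimes\tangi)=\tfrac12\lambdai\stildekmti$. Testing~\eqref{dofs:bulk-kmo-lambda} with $\wi=\stildekmti$ gives $\tfrac12\int_{\Ki}\lambdai^2(\stildekmti)^2\dx=0$. Because $\lambdai>0$ in the interior of $\Ki$, this forces $\stildekmti=0$, and hence $\taubfh=0$.

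The only point requiring care is the edge step. One must check that the normal-tangential component on $\Fi$ depends only on the edge part $\skmoFi$, and that the edge moments pin it down completely. This follows from the direct-sum structure in Lemma~\ref{lemma:decomposition-polynomial-spaces} together with $\lambdai|_{\Fi}=0$. The sign convention for $\normi$ and $\tangi$ does not matter, since it only flips the sign of a moment. Everything else reduces to the orthogonality relations established above.
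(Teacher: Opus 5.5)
Your proposal is correct and follows essentially the same argument as the paper. The steps match: a dimension count, then $\rkmoi=0$ from the deviatoric moments, then $\skmoi|_{\Fi}=0$ from the edge moments, then $\lambdai\stildekmti=0$ from the $\lambdai$-weighted bulk moments. The only cosmetic differences are that you split $\skmoi$ via Lemma~\ref{lemma:decomposition-polynomial-spaces} before using the edge moments (the paper does this afterwards), and that your argument covers $k=1$ uniformly rather than treating it separately.
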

\begin{proof}
Let~$\taubfh$ in~$\Sigmahkmo(\Ki)$
be such that the moments~\eqref{dofs} vanish.
The set of functionals in~\eqref{dofs}
has the same cardinality as $\dim \Sigmahkmo(\Ki)$.
Indeed,
a direct computation
and~\eqref{dim-sigmakmo}
show
\begin{equation} \label{dimensions-check}
\begin{split}
\dim \Pbbkmo(\Fi) + \dim \Pbbkmo(\Ki) + \dim \Pbbkmt(\Ki)
&= k + \frac{k(k+1)}{2} + \frac{(k-1)k}{2} \\
&= k(k+1)
\overset{\eqref{dim-sigmakmo}}{=} \dim \Sigmahkmo(\Ki).
\end{split}
\end{equation}
Therefore,
it suffices to prove that~$\taubfh$ is identically zero over~$\Ki$.

We detail the proof for $k-1 \geq 1$.
The lowest-order case $k-1 = 0$ can be treated similarly
and is simpler,
since we no longer have the bulk moment of~\eqref{dofs:bulk-kmo-lambda}
among the degrees of freedom,
and the polynomial coefficients appearing in~\eqref{decomposition-polynomial-spaces} are constants.

Let $k-1 \geq 1$
and let~$\taubfh$ in~$\Sigmahkmo(\Ki)$ be such that
the moments~\eqref{dofs} vanish.
By Lemma~\ref{lemma:dev-sym-basis-Ki},
there exist polynomials~$\rkmoi$ and~$\skmoi$ in~$\Pbbkmo(\Ki)$
such that the restriction of~$\taubfh$ on each~$\Ki$
admits the following representation
\begin{equation} \label{decomposition-taubfh}
\taubfh{}|_{\Ki} = \rkmoi\dev(\normi\otimes\normi) + \skmoi\sym(\normi\otimes\tangi).
\end{equation}
We proceed by proving that both polynomial coefficients vanish.

We first show that~$\rkmoi$ vanishes.
Since the bulk moments~\eqref{dofs:bulk-k} vanish,
by applying the orthogonality property~\eqref{orthogonality:dev-sym}
and the identity~\eqref{dev:frob-norm},
we deduce that
for all~$\qi$ in~$\Pbbkmo(\Ki)$
the following identity holds true
\begin{equation*} 
\begin{split}
0 
\overset{\eqref{dofs:bulk-k}}{=}
\int_\Ki  \taubfh : \dev(\normi\otimes\normi) \qi \dx
&\overset{\eqref{decomposition-taubfh}}{=}
\int_\Ki  \rkmoi \dev(\normi\otimes\normi) : \dev(\normi\otimes\normi) \qi \dx \\
&\qquad+ \int_\Ki \skmoi \sym(\normi\otimes\tangi) :  \dev(\normi\otimes\normi) \qi \dx \\
&\overset{\eqref{dev:frob-norm},\eqref{orthogonality:dev-sym}}{=}
\frac12 \int_\Ki  \rkmoi \qi \dx.
\end{split}
\end{equation*}
Since~$\rkmoi$ is in~$\Pbbkmo(\Ki)$,
choosing~$\qi = \rkmoi$ in
the above display yields~$\rkmoi = 0$.
Hence,
the decomposition~\eqref{decomposition-taubfh} reads as
\begin{equation} \label{decomposition-taubfh-rkmoizero}
\taubfh|_{\Ki} = \skmoi\sym(\normi\otimes\tangi).
\end{equation}

We next show that~$\skmoi$ vanishes.
Since $\taubfh$ is symmetric,
its normal-tangential component on~$\Fi$ is
\begin{equation} \label{nt-tauh-on-FKi}
\begin{split}
(\taubfh)_\normtang{}|_{\Fi}
&
= \tangi^T(\taubfh|_{\Fi}) \normi
= \taubfh|_{\Fi} : (\tangi\otimes\normi)
= \taubfh|_{\Fi} : \sym(\normi\otimes\tangi)
\\
&\!\!\!\!\overset{\eqref{decomposition-taubfh}}{=}
(\rkmoi{}|_{\Fi}\dev(\normi\otimes\normi) + \skmoi{}|_{\Fi}\sym(\normi\otimes\tangi)): \sym(\normi\otimes\tangi) \\
&\overset{\eqref{orthogonality:dev-sym},\eqref{sym:frob-norm}}{=}
0 + \frac{1}{2}\skmoi{}|_{\Fi}.
\end{split}
\end{equation}
Since the edge moments~\eqref{dofs:edge} vanish 
and $2(\taubfh)_\normtang|_{\Fi} = \skmoi|_{\Fi}$ belongs to $\Pbbkmo(\Fi)$,
using the display above,
we deduce
\begin{equation*}
\begin{split}
0
\overset{\eqref{dofs:edge}}{=}
(\taubfh)_\normtang|_{\Fi}
\overset{\eqref{nt-tauh-on-FKi}}{=} 
\frac12\skmoi{}|_{\Fi},
\end{split}
\end{equation*}
i.e., $\skmoi$ vanishes on the edge~$\Fi$.
In particular,
due to the decomposition~\eqref{decomposition-polynomial-spaces},
there exists a polynomial~$\stildekmti$ in~$\Pbbkmt(\Ki)$
such that
\begin{equation} \label{skmoi-decomposition}
\skmoi = \lambdai \stildekmti,
\end{equation}
and the decomposition~\eqref{decomposition-taubfh-rkmoizero} reads as
\begin{equation} \label{tau-lambda-stildekmti}
\taubfh|_{\Ki}
= \lambdai \stildekmti\sym(\normi\otimes\tangi).
\end{equation}

Since the bulk moments~\eqref{dofs:bulk-kmo-lambda} vanish as well,
by applying the identity~\eqref{sym:frob-norm}
we deduce that,
for all~$\qkmt$ in~$\Pbbkmt(\Ki)$,
the following identity holds true
\begin{equation*} 
\begin{split}
0 
\overset{\eqref{dofs:bulk-kmo-lambda}}{=}
\int_\Ki  \taubfh :  \sym(\normi\otimes\tangi) \lambdai\wi \dx
&\overset{\eqref{tau-lambda-stildekmti}}{=}
\int_\Ki \lambdai \stildekmti \sym(\normi\otimes\tangi) :  \sym(\normi\otimes\tangi) \lambdai\wi \dx \\
&\overset{\eqref{sym:frob-norm}}{=}
\frac12 \int_\Ki \lambdai^2 \stildekmti \wi \dx.
\end{split}
\end{equation*}
Since~$\stildekmti$ is in~$\Pbbkmt(\Ki)$,
and the barycentric coordinate~$\lambdai$ is positive on~$\Ki$,
taking~$\wi = \stildekmti$ in the above display yields~$\stildekmti = 0$.
Using~\eqref{skmoi-decomposition},
we deduce that~$\skmoi = 0$.
\end{proof}

A set of global degrees of freedom for the discrete space~$\Sigmahkmo$
is obtained by a coupling of the degrees of freedom in~\eqref{dofs}.

\begin{remark}
In the standard MCS method~\cite{Gopalakrishnan-Lederer-Schoberl:2020-b},
a suitable combination of the covariant and contravariant Piola transformations
is employed to map the stress space from the reference element to the physical element,
while preserving the normal-tangential component
and the trace-free property.
In the present setting,
we would further need to preserve the symmetry of the stress tensor,
but considering the symmetric version of the above mapping is not sufficient:
a direct computation shows that,
although it preserves symmetry,
it no longer preserves the normal-tangential component.
Instead of pursuing the construction of a completely different mapping,
we rely on alternative arguments in the analysis,
which therefore does not require any specific mapping for the space~$\Sigmahkmo$.
\eremk
\end{remark}

\subsection{A technical tool: a local discrete space on~$\K$} \label{subsection:macro-local-space}

Given an element~$\K$ of the mesh~$\Tauh$
and one of its subelements~$\Ki$ in~$\CcalhK$,
both Lemma~\ref{lemma:dev-sym-basis-Ki}
and Lemma~\ref{lemma:unisolvence}
in the previous section
rely on the basis
$\{\dev(\normi\otimes\normi),\sym(\normi\otimes\tangi)\}$
of~$\Sigmahz(\Ki)$.
This space coincides with the lowest-order local discrete stress space~$\Sigmahkmo(\Ki)$,
which is defined on one \emph{sub}element.
For the stability analysis,
however,
it is essential to also exploit the structure of a local discrete space
defined on the element~$\K$.
To this aim,
we introduce the following local space
\begin{equation*}
\Sigmahkmo(K) := \{ \taubfh \in \Pbbkmo(K,\Sbb) \, | \, \tr(\taubfh) = 0 \},
\end{equation*}
to which the deviatoric part of
the local restriction of the symmetric gradient
of the discrete velocity field belongs.
Below,
we discuss some structural properties of this space.
We begin by showing that every tensor in~$\Sigmahkmo(K)$
admits a representation in terms of basis functions of
the lowest-order local space~$\Sigmahz(K)$
with polynomial coefficients in~$\Pbbkmo(K)$.
This representation
will play a key role in the proof of the discrete inf-sup condition.

\begin{lemma} \label{lemma:dev-n1-n2-basis}
Given~$\K$ in~$\Tauh$,
there exist two distinct indices $\itt,\jtt$ in $\{1,2,3\}$
such that the set
$\{\dev(\normitt\otimes\normitt),\dev(\normjtt\otimes\normjtt)\}$
forms a basis for~$\Sigmahz(\K)$.
In particular,
for any given~$\taubfh$ in~$\Sigmahkmo(\K)$,
there exist polynomials~$\lkmo$ and~$\mkmo$ in~$\Pbbkmo(\K)$
such that~$\taubfh$ admits the following representation:
\begin{equation*}
\taubfh = \lkmo \dev(\normitt\otimes\normitt) + \mkmo \dev(\normjtt\otimes\normjtt).
\end{equation*}
Moreover,
the following estimate holds true
\begin{equation} \label{strict-CS}
|\dev(\normitt\otimes\normitt):\dev(\normjtt\otimes\normjtt)|
< |\dev(\normitt\otimes\normitt)||\dev(\normjtt\otimes\normjtt)|
= \frac12.
\end{equation}
\end{lemma}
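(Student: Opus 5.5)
The proof reduces to an explicit two-dimensional computation in the space $\Sigmahz(\K)$ of constant symmetric trace-free $2\times 2$ matrices, which has dimension $2$. The plan is to parametrize unit normals by angle and compute the Frobenius inner product of the tensors $\dev(\nbf\otimes\nbf)$ in closed form. Write $\nbf_\ell = (\cos\theta_\ell,\sin\theta_\ell)$ for the outward unit normal of the edge of $\K$ indexed by $\ell\in\{1,2,3\}$. From the explicit matrix representation recalled before Lemma~\ref{lemma:dev-sym-basis-Ki}, we have $\ncompo^2-\tfrac12 = \tfrac12\cos 2\theta$ and $\ncompo\ncompt = \tfrac12\sin 2\theta$. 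Hence
\begin{equation*}
\dev(\nbf_\ell\otimes\nbf_\ell) = \frac12\begin{pmatrix} \cos 2\theta_\ell & \sin 2\theta_\ell \\ \sin 2\theta_\ell & -\cos 2\theta_\ell \end{pmatrix},
\end{equation*}
and a direct computation gives
\begin{equation*}
\dev(\nbf_\ell\otimes\nbf_m):\dev(\nbf_m\otimes\nbf_m)\big|_{\ell=m}=\tfrac12,
\qquad
\dev(\nbf_\ell\otimes\nbf_\ell):\dev(\nbf_m\otimes\nbf_m) = \tfrac12\cos\bigl(2(\theta_\ell-\theta_m)\bigr).
\end{equation*}
The diagonal value $\tfrac12$ is consistent with \eqref{dev:frob-norm}, which yields the equality in \eqref{strict-CS}.

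Next comes the choice of indices. The strict inequality in \eqref{strict-CS} is equivalent to $|\cos(2(\theta_\itt-\theta_\jtt))|<1$, that is, $\theta_\itt-\theta_\jtt\notin \tfrac{\pi}{2}\mathbb{Z}$. Two distinct edges of a nondegenerate triangle are never parallel, so $\theta_\itt-\theta_\jtt\notin\pi\mathbb{Z}$. It therefore remains to avoid perpendicular normals. In a triangle at most one angle equals $\pi/2$, so at most one pair of edges is perpendicular. Among the three pairs, at least two are neither parallel nor perpendicular, and we pick one of them as $(\itt,\jtt)$. This combinatorial step is the only one needing care, and it is the main, though mild, obstacle. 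Shape regularity can be noted for later use, since it keeps $|\cos(2(\theta_\itt-\theta_\jtt))|$ uniformly away from $1$; the lemma only asks for strictness.

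Linear independence now follows because strict Cauchy--Schwarz excludes collinearity of the two tensors. Since $\dim\Sigmahz(\K)=2$ (the constant case of \eqref{dim-sigmakmo}), the set $\{\dev(\normitt\otimes\normitt),\dev(\normjtt\otimes\normjtt)\}$ is a basis. For the representation, we argue as in Lemma~\ref{lemma:dev-sym-basis-Ki}: writing $\Sigmahkmo(\K)=\Pbbkmo(\K)\otimes\Sigmahz(\K)$ and expanding each $\taubfh$ pointwise in this constant basis gives coefficients $\lkmo,\mkmo$. These coefficients are fixed linear combinations of the entries of $\taubfh$, obtained by inverting the $2\times2$ Gram matrix, and hence lie in $\Pbbkmo(\K)$.
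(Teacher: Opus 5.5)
Your proof is correct, but it runs in the opposite logical direction from the paper's. The paper first proves that $\dev(\normitt\otimes\normitt)$ and $\dev(\normjtt\otimes\normjtt)$ are linearly independent, without coordinates. It applies a vanishing linear combination to $\tangjtt$ and uses that $\normitt$ is neither parallel nor orthogonal to $\tangjtt$. Only then does it obtain the strict inequality in \eqref{strict-CS}, from the equality case of Cauchy--Schwarz. The existence of an admissible pair $(\itt,\jtt)$ is simply attributed to shape regularity.

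You instead compute the Gram entries in closed form, $\dev(\nbf_\ell\otimes\nbf_\ell):\dev(\nbf_m\otimes\nbf_m)=\tfrac12\cos(2(\theta_\ell-\theta_m))$, prove strictness directly, and deduce independence from it. Your choice of pair is also more explicit than the paper's: a nondegenerate triangle has no parallel edges and at most one right angle.

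The paper's route is shorter and needs no trigonometry. Yours makes the gap $\tfrac12-|\dev(\normitt\otimes\normitt):\dev(\normjtt\otimes\normjtt)|$ quantitative. That is exactly what the later constants $\ceqone$ and $\Ceqthree$ need in order to be independent of $h$, a point the paper's qualitative argument leaves implicit.

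To actually obtain that uniformity, picking an arbitrary non-perpendicular pair is not enough. Note that $\cos(2(\theta_\itt-\theta_\jtt))=\cos 2\alpha$, where $\alpha$ is the interior angle at the shared vertex, and this $\alpha$ may be arbitrarily close to $\pi/2$. Choose instead the pair whose vertex angle is farthest from $\pi/2$. If all angles are at least $\theta_0$, at most one of them lies within $\theta_0/2$ of $\pi/2$, so this choice gives $|\cos 2\alpha|\le\cos\theta_0$.
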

\begin{proof}
The shape-regularity assumption guarantees that
there exists at least a pair of distinct indices $(\itt,\jtt)$ in $\{1,2,3\}$,
such that the corresponding normal vectors $\normitt$ and $\normjtt$
are neither parallel nor orthogonal.
Consider the lowest-order local space $\Sigmahz(\K)$.
Using the same dimension count as in~\eqref{dimensions-check},
we obtain $\dim \Sigmahz(\K) = 2$.
Since the set $\{\dev(\normitt\otimes\normitt),\dev(\normjtt\otimes\normjtt)\}$
comprises two elements,
it suffices to show that they are linearly independent.

Let~$\alpha$ and~$\beta$ be in~$\Rbb$.
Assume that
\begin{equation*}
\zerobf
= \alpha\dev(\normitt\otimes\normitt) + \beta \dev(\normjtt\otimes\normjtt)
= \alpha(\normitt\otimes\normitt - \frac{1}{d}\Idbf) + \beta(\normjtt\otimes\normjtt - \frac{1}{d}\Idbf),
\end{equation*}
where we have applied the definition of the deviatoric operator~$\dev$.
Applying the above identity to the tangential vector~$\tangjtt$,
and using the orthogonality property between $\normjtt$ and $\tangjtt$
together with the identity
$(\abf\otimes\bbf)\cbf = (\bbf \cdot \cbf)\abf$
for all $\abf$, $\bbf$, $\cbf$ in $\Rbbd$,
gives
\begin{equation*} \label{alpha-beta-basisn1n2}
\begin{split}
0
= \alpha(\normitt\otimes\normitt - \frac{1}{d}\Idbf)\tangjtt + \beta(\normjtt\otimes\normjtt - \frac{1}{d}\Idbf)\tangjtt
= \alpha(\normitt \cdot \tangjtt)\normitt - \frac{\alpha+\beta}{d}\tangjtt.
\end{split}
\end{equation*}
Since~$\normitt$ and~$\tangjtt$ are not parallel,
the above identity is satisfied if and only if both the coefficients
$\alpha(\normitt \cdot \tangjtt)$
and
$(\alpha+\beta)/d$
vanish;
since~$\normitt$ and~$\tangjtt$ are not orthogonal,
$\normitt \cdot \tangjtt \neq 0$,
which,
combined with $\alpha(\normitt \cdot \tangjtt)=0$,
yields~$\alpha = 0$.
Using this identity in $(\alpha+\beta)/d = 0$
in turn implies~$\beta = 0$.
Therefore,
the tensors $\dev(\normitt\otimes\normitt)$ and $\dev(\normjtt\otimes\normjtt)$
are linearly independent and form a basis of~$\Sigmahz(\K)$.

The existence of the representation
$
\taubfh = \lkmo \dev(\normitt\otimes\normitt) + \mkmo \dev(\normjtt\otimes\normjtt),
$
for~$\lkmo$ and~$\mkmo$ in~$\Pbbkmo(\K)$,
follows from the decomposition
$
\Sigmahkmo(\K) = \Pbbkmo(\K)\otimes\Sigmahz(\K).
$
The Cauchy--Schwarz inequality yields
$
|\dev(\normitt\otimes\normitt):\dev(\normjtt\otimes\normjtt)|
\leq |\dev(\normitt\otimes\normitt)||\dev(\normjtt\otimes\normjtt)|.
$
Since
$\dev(\normitt\otimes\normitt)$ and $\dev(\normjtt\otimes\normjtt)$
are linearly independent,
equality cannot hold in the above display,
which combined with the identity~\eqref{dev:frob-norm},
yields the assertion.
\end{proof}

Using the indices~$\itt$ and~$\jtt$
from the previous Lemma,
we further deduce the following decompositions,
which will be crucial
in the proof of the discrete inf-sup condition
in Section~\ref{section:a-priori}.

\begin{corollary} \label{corollary:decomposition-epsvh}
Given~$\K$ in~$\Tauh$
and~$\vbfh$ in~$\Vhk$,
there exist polynomials~$\lkmo$ and~$\mkmo$ in~$\Pbbkmo(\K)$
such that the restriction of~$\dev(\epsbfun(\vbfh))$ to~$\K$
admits the following representation:
\begin{equation} \label{eps-vh:dev-dev-representation-dev}
\dev(\epsbfun(\vbfh)|_{\K}) = \lkmo \dev(\normitt\otimes\normitt) + \mkmo \dev(\normjtt\otimes\normjtt).
\end{equation}
Moreover,
if~$\dive(\vbfh|_{\K})$ vanishes,
the same representation holds also for the restriction of~$\epsbfun(\vbfh)$ to~$\K$, i.e.,
\begin{equation} \label{eps-vh:dev-dev-representation}
\epsbfun(\vbfh)|_{\K} = \lkmo \dev(\normitt\otimes\normitt) + \mkmo \dev(\normjtt\otimes\normjtt).
\end{equation}
\end{corollary}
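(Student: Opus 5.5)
The plan is to reduce the statement to Lemma~\ref{lemma:dev-n1-n2-basis}. It suffices to check that $\dev(\epsbfun(\vbfh)|_{\K})$ belongs to the local space $\Sigmahkmo(\K)$.

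Fix $\K$ in $\Tauh$ and $\vbfh$ in $\Vhk$. Since $\vbfh|_{\K}$ lies in $\Pbbk(\K,\Rbbd)$, every entry of $\nablabfun(\vbfh|_{\K})$ is a polynomial of degree at most $k-1$. Hence $\epsbfun(\vbfh)|_{\K}$ belongs to $\Pbbkmo(\K,\Sbb)$, because it is symmetric by definition. The deviatoric operator acts pointwise and linearly. Since $\tr(\epsbfun(\vbfh)|_{\K})=\dive(\vbfh|_{\K})$ is in $\Pbbkmo(\K)$, the tensor
\begin{equation*}
\dev(\epsbfun(\vbfh)|_{\K}) = \epsbfun(\vbfh)|_{\K} - \tfrac12 \dive(\vbfh|_{\K})\,\Idbf
\end{equation*}
is again symmetric, with entries in $\Pbbkmo(\K)$, and it is trace-free since $\tr(\Idbf)=d=2$. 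It therefore lies in $\Sigmahkmo(\K)$. Taking the indices $\itt,\jtt$ given by Lemma~\ref{lemma:dev-n1-n2-basis} for this $\K$ and applying its representation to $\taubfh=\dev(\epsbfun(\vbfh)|_{\K})$ yields polynomials $\lkmo,\mkmo$ in $\Pbbkmo(\K)$ satisfying \eqref{eps-vh:dev-dev-representation-dev}. Note that the global normal continuity of $\vbfh$ plays no role, since only the restriction to $\K$ matters.

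For the second claim, assume $\dive(\vbfh|_{\K})=0$. Then $\tr(\epsbfun(\vbfh)|_{\K})=0$, so $\dev(\epsbfun(\vbfh)|_{\K})=\epsbfun(\vbfh)|_{\K}$. Substituting this into \eqref{eps-vh:dev-dev-representation-dev} gives \eqref{eps-vh:dev-dev-representation} with the same coefficients.

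There is no real obstacle here. The only points to state carefully are the degree count for $\nablabfun\vbfh$ and the trace identity $\tr\epsbfun(\vbfh)=\dive\vbfh$. Both are immediate, and the rest is a direct invocation of Lemma~\ref{lemma:dev-n1-n2-basis}.
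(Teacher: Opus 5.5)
Your proposal is correct and follows the same route as the paper. You verify that $\dev(\epsbfun(\vbfh)|_{\K})$ lies in $\Sigmahkmo(\K)$, a step the paper only asserts ``by construction'' and which your degree and trace checks make explicit. You then invoke Lemma~\ref{lemma:dev-n1-n2-basis}, and in the divergence-free case use $\tr\epsbfun(\vbfh)=\dive\vbfh$ to identify $\dev(\epsbfun(\vbfh))$ with $\epsbfun(\vbfh)$.
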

\begin{proof}
Since for every element~$\K$ in~$\Tauh$,
$\dev(\epsbfun(\vbfh))|_{\K}$
belongs to~$\Sigmahkmo(\K)$ by construction,
identity~\eqref{eps-vh:dev-dev-representation-dev} follows by applying Lemma~\ref{lemma:dev-n1-n2-basis}.

In the particular case in which $\dive(\vbfh|_{\K}) = 0$,
$\epsbfun(\vbfh)$ is trace-free and
we have
\begin{equation} \label{deveps-equal-eps}
\dev(\epsbfun(\vbfh))
= \epsbfun(\vbfh) - \frac{1}{d}\tr(\epsbfun(\vbfh)) \Idbf 
= \epsbfun(\vbfh) - \frac{1}{d}\dive(\vbfh) \Idbf 
= \epsbfun(\vbfh).
\end{equation}
Hence,
decomposition~\eqref{eps-vh:dev-dev-representation}
follows from combining the display above with~\eqref{eps-vh:dev-dev-representation-dev}.
\end{proof}

Throughout,
the notation~$\lkmo$ and~$\mkmo$
will always refer to the polynomial coefficients
appearing
in the decomposition~\eqref{eps-vh:dev-dev-representation-dev} of~$\dev(\epsbfun(\vbfh))$.


\subsection{An alternative hybrid variational formulation} \label{subsection:hybrid}
As an equivalent alternative formulation for the mixed method~\eqref{strong-mixed},
one can consider a hybrid discretization
in which an interface variable is introduced to approximate
the tangential component of the trace of the stress.
In particular,
we introduce the discrete space
\begin{align*}
\SigmaHDG := \{ \taubfh  \in \Pbbkmo(\Ccalh,\Sbb)\,| \, \tr(\taubfh) = 0 \},
\end{align*}
together with the interface space
\begin{align*}
\VhatHDG := \{ \vhhat \in \Pbbkmo(\Fcalh,\Rbbd) \, | \, (\vhhat)_{\norm}|_{\F} = 0,\, \forall \F \in \Fcalh,
                                                      \, \vhhat|_{\Gamma} = 0 \}. 
\end{align*}
The tangential facet space~$\VhatHDG$
is introduced to enforce continuity of the normal-tangential component of the stress.
We denote the corresponding product space by
\begin{equation*}
\UHDG := \SigmaHDG \times \Vhk \times \Qhkmo \times \VhatHDG.
\end{equation*}
Given the bilinear form
\begin{equation*} 
\btwohyb(\taubfh;(\ubfh,\uhhat))
:= - \sum_{\K \in \Tauh} \sum_{i=1}^3 \int_{\Ki} \taubfh : \epsbfun(\ubfh)\dx 
+ \int_{\partial\K} (\taubfh)_{\normtang} ((\ubfh-\uhhat)_{t}) \ds,
\end{equation*}
we consider the following \emph{hybrid} MCS formulation of~\eqref{variational-formulation:discrete}:
find $(\sigmabfh, \ubfh,\ph, \uhhat)$ in $\UHDG$ such that
\begin{subequations} \label{variational-formulation:discrete-hybrid}
\begin{alignat}{2}
a(\sigmabfh,\taubfh) + \btwohyb(\taubfh;(\ubfh,\uhhat))      &= 0                    \quad&&\forall \taubfh \in \SigmaHDG \\
\btwohyb(\sigmabfh;(\vbfh,\vhhat)) + \bone(\vbfh, \ph)        &= -(\fbf,\vbfh)        \quad&&\forall (\vbfh, \vhhat) \in \Vhk \times \VhatHDG \\
\bone(\ubfh, \qh)                                                                               &= 0                    \quad&&\forall \qh \in \Qhkmo.
\end{alignat}
\end{subequations}

Since hybridization yields a formulation that is equivalent
to the original mixed method;
see, e.g., \cite[Ch.~7.2.2]{Boffi-Brezzi-Fortin:2013},
the hybrid formulation~\eqref{variational-formulation:discrete-hybrid}
and the (non-hybridized) formulation~\eqref{variational-formulation:discrete}
produce the same discrete solution.
For the sake of clarity,
we briefly detail the steps of the proof. 
Let $(\sigmabfh,\ubfh,\ph, \uhhat)$ be the solution to~\eqref{variational-formulation:discrete-hybrid}.
Testing the second equation of~\eqref{variational-formulation:discrete-hybrid}
with $(\vbfh, \vhhat) = (0, \jump{(\sigmabfh)_{\normtang}})$,
which is a possible choice since
$(\sigmabfnt)_{t}$ is a tangential polynomial of order~$k-1$ on each~$\F$ in~$\Fcalh$,
we obtain
\begin{equation*}
\sum_{\F \in \Fcalh} \int_{\F} \jump{(\sigmabfh)_{\normtang}}^2 \ds = 0,
\end{equation*}
thus $\sigmabfh$ is $\normtang$-continuous and~$\sigmabfh$ belongs to~$\Sigmahkmo$.
Moreover,
testing the first equation of~\eqref{variational-formulation:discrete-hybrid}
with a stress tensor~$\taubfh$ in~$\Sigmahkmo$,
we obtain the first equation of~\eqref{variational-formulation:discrete}
since~$\btwohyb(\taubfh; (\ubfh,\uhhat))$ boils down to~$\btwo(\taubfh; \ubfh)$.
Hence,
$(\sigmabfh,\ubfh,\ph)$ solves also~\eqref{variational-formulation:discrete},
and the claimed equivalence follows from the uniqueness of the solution to such a formulation.

In particular,
we will employ the hybrid version solely as an implementation tool
for efficiently solving the resulting linear system
through static condensation as detailed below.
For this reason,
we omit the details concerning the solvability
and analysis of~\eqref{variational-formulation:discrete-hybrid},
which follows from the same arguments used in~\cite{Gopalakrishnan-Kogler-Lederer-Schoberl:2023}
for the analysis of a different hybridized method
corresponding to the standard MCS formulation.
An alternative hybridization for~\eqref{variational-formulation:discrete} would be to
also approximate the normal component of the velocity by an additional interface variable.
In this case,
one can introduce a second facet space
$\QhatHDG := \Pbbk(\Fcalh)$,
set $\VHDG := \Pbbk(\Tauh,\Rbbd)$,
and modify the discrete formulation accordingly.
We refer to~\cite{Rhebergen-Wells:2018,Rhebergen-Wells:2022}
for a detailed analysis of the linear system stemming from the hybridization procedure.

Using the hybrid formulation~\eqref{variational-formulation:discrete-hybrid}
and the above comments,
we can compare the computational cost of~\eqref{variational-formulation:discrete}
with the standard MCS method~\cite{Gopalakrishnan-Lederer-Schoberl:2020},
for which details on hybridization and
static condensation can be found in~\cite[Sect.~4.2]{Kogler-Lederer-Schoberl:2023}.
Since the velocity and pressure discrete spaces coincide in both methods,
the resulting globally coupled unknowns after static condensation are the same.
The stress space,
instead,
is now defined on Clough--Tocher macroelements,
which may introduce additional coupling.
However,
a crucial feature of the space~$\Sigmahkmo$ of~\eqref{discrete-spaces}
is that no continuity is imposed
across the interior Clough--Tocher edges in~$\Fcalhd$;
hence,
the only coupled degrees of freedom
are those associated with the external edges in~$\Fcalh$,
whereas all the remaining are purely local.
After eliminating all interior degrees of freedom by static condensation,
the dimension of the global linear system associated with~\eqref{variational-formulation:discrete-hybrid}
is therefore the same as that of the standard MCS method~\cite{Gopalakrishnan-Lederer-Schoberl:2020}
and
the computational cost of the two methods,
as well as that of other standard mixed methods
for the discretization of the Stokes problem,
is comparable.
At the same time,
the novel method of~\eqref{variational-formulation:discrete} enforces the symmetry of the stress tensor exactly.

\section{A priori error analysis} \label{section:a-priori}
In this section,
we establish the stability and convergence properties
of method~\eqref{variational-formulation:discrete}.
In line with the a priori analysis of~\cite{Gopalakrishnan-Lederer-Schoberl:2020,Gopalakrishnan-Lederer-Schoberl:2020-b},
we equip the discrete spaces in~\eqref{discrete-spaces}
with the following norms:
\begin{align*}
&\Norm{\taubfh}_{\Sigmah}^2
    := \Norm{\taubfh}_{\LtwoOmega}^2
    =\sum_{\K \in \Tauh} \NormLtwoK{\taubfh}^2
    = \sum_{\K \in \Tauh} \sum_{i=1}^3 \Norm{\dev(\taubfh)}^2_{\Ltwo(\Ki)},  \\
&\Norm{\vbfh}_{\Vh}^{2}
    := \Norm{\vbfh}_{1,h}^{2}
    := \sum_{\K \in \Tauh} \Norm{\epsbfun(\vbfh)}_{\Ltwo(\K)}^2 + \sum_{\F \in \Fcalh}\frac{1}{h}\Norm{\jump{(\vbfh)_{\tangscalar}}}_{\Ltwo(\F)}^2, \\
&\Norm{\qh}_{\Qh}^2 := \Norm{\qh}_{\LtwoOmega}^2.
\end{align*}

The a priori analysis is organized as follows.
In Section~\ref{subsection:basis-norm-equivalence},
we establish preliminary norm equivalences involving the norms introduced above.
These tools are then used
in Section~\ref{subsection:stability}
to prove continuity,
kernel coercivity,
and discrete inf-sup stability
for the bilinear forms arising in the proposed method.
Finally,
in Section~\ref{subsection:error-estimates},
we construct an interpolation operator for the discrete stress space
and derive optimal error estimates for~\eqref{variational-formulation:discrete}.

Throughout,
two different types of indices will be used.
For the reader's convenience,
we briefly recall their meaning and role.
The indices~$i$ and~$j$ are associated with subelements~$\Ki$ and~$\Kj$ in~$\Ccalh$,
and are used to locally decompose a stress tensor in~$\Sigmahkmo(\Ki)$ or~$\Sigmahkmo(\Kj)$,
as in Lemma~\ref{lemma:dev-sym-basis-Ki}.
The indices~$\itt$ and~$\jtt$ are instead associated with a macroelement~$\K$ in~$\Tauh$,
and are used together to locally decompose the symmetric gradient
(or its deviatoric part)
of a velocity field in~$\Vhk$,
as in Corollary~\ref{corollary:decomposition-epsvh}.

\subsection{Norm equivalences} \label{subsection:basis-norm-equivalence}
Here,
we are interested in deriving some norm equivalences
involving a discrete velocity~$\vbfh$ in~$\Vhk$
and its symmetric gradient~$\epsbfun(\vbfh)$,
with a particular emphasis on divergence-free functions.

Consider the discrete divergence-free subspace
$\Vhkz := \{ \vbfh \in \Vhk \, | \, \dive(\vbfh) = 0 \}$.
Then, given a discrete velocity field~$\vbfh$ in~$\Vhkz$,
the tensor $\epsbfun(\vbfh)|_{\K}$
is trace-free, cf.~\eqref{deveps-equal-eps},
symmetric,
and belongs to~$\Pbbkmo(\K,\Rbbdd)$
for every~$\K$ in~$\Tauh$ by construction.
Therefore,
\begin{equation*}
\epsbfun(\vbfh)|_{\K} \in \Sigmahkmo(\K)
\qquad
\forall \vbfh \in \Vhkz, \quad \forall \K \in \Tauh.
\end{equation*}
In particular,
for every~$\vbfh$ in~$\Vhkz$,
$\epsbfun(\vbfh)|_{\K}$ admits the representation in~\eqref{eps-vh:dev-dev-representation}.

Introduce now the mesh-dependent norm
\begin{equation*} 
\Norm{\vbfh}_{1,\dev,h}^{2} := \sum_{\K \in \Tauh} \Norm{\dev\epsbfun(\vbfh)}_{\Ltwo(\K)}^2 + \sum_{\F \in \Fcalh}\frac{1}{h}\Norm{\PizF\jump{(\vbfh)_{\tangscalar}}}_{\Ltwo(\F)}^2.
\end{equation*}
By combining 
the identity~\eqref{deveps-equal-eps}
with the stability and approximation properties of the projector operator~$\PizF$
and a Korn-type inequality; see, e.g., \cite{Brenner:2004},
we obtain
\begin{equation} \label{norm-equivalence-h-dev}
\Norm{\vbfh}_{\Vh} \simeq \Norm{\vbfh}_{1,\dev,h},
\end{equation}
for all~$\vbfh$ in~$\Vhkz$.
We refer the reader to~\cite[Lem.~6.2]{Gopalakrishnan-Lederer-Schoberl:2020} and \cite[Sect.~3.2]{Gopalakrishnan-Kogler-Lederer-Schoberl:2023}
for details regarding the proof of the above norm equivalence.

We next compare suitable norms involving
the symmetric gradient of a divergence-free discrete velocity field and
the polynomial coefficients appearing in the decomposition~\eqref{eps-vh:dev-dev-representation-dev}.
\begin{lemma} \label{lemma:norm-equivalence}
Let~$\K$ be an element in~$\Tauh$
and~$\vbfh$ be in~$\Vhkz$.
Consider its symmetric gradient~$\epsbfun(\vbfh)$
and let~$\lkmo$ and~$\mkmo$ be the polynomials in~$\Pbbkmo(\K)$
of its representation on~$\K$; cf.~\eqref{eps-vh:dev-dev-representation-dev}.
Then,
the following norm equivalence holds true
\begin{equation} \label{norm-equivalence:statement}
\Norm{\epsbfun(\vbfh)}_{\Ltwo(\K)}^2
\simeq \sum_{i = 1}^3\big(\Norm{\lkmo}_{\Ltwo(\Ki)}^2 + \Norm{\mkmo}_{\Ltwo(\Ki)}^2\big).
\end{equation}
\end{lemma}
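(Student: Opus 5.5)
The plan is to reduce the statement to a pointwise equivalence of quadratic forms on $\Rbb^2$ and then integrate over each subelement. By Corollary~\ref{corollary:decomposition-epsvh}, and since $\dive\vbfh=0$, we have on $\K$ the identity $\epsbfun(\vbfh) = \lkmo\dev(\normitt\otimes\normitt)+\mkmo\dev(\normjtt\otimes\normjtt)$. Abbreviating $\mathbf{A}:=\dev(\normitt\otimes\normitt)$, $\mathbf{B}:=\dev(\normjtt\otimes\normjtt)$ and $\gamma := \mathbf{A}:\mathbf{B}$, expanding the squared Frobenius norm pointwise and using~\eqref{dev:frob-norm} gives, at every $\xbf\in\K$,
\begin{equation*}
|\epsbfun(\vbfh)(\xbf)|^2 = \tfrac12\lkmo(\xbf)^2 + 2\gamma\,\lkmo(\xbf)\mkmo(\xbf) + \tfrac12\mkmo(\xbf)^2 .
\end{equation*}
This is the quadratic form in $(\lkmo,\mkmo)$ with matrix $\begin{pmatrix}1/2 & \gamma\\ \gamma & 1/2\end{pmatrix}$. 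Its eigenvalues are $\tfrac12\pm|\gamma|$, and the strict Cauchy--Schwarz inequality~\eqref{strict-CS} ensures $|\gamma|<\tfrac12$.

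From this we obtain the pointwise two-sided bound
\begin{equation*}
(\tfrac12-|\gamma|)(\lkmo^2+\mkmo^2) \le |\epsbfun(\vbfh)|^2 \le (\tfrac12+|\gamma|)(\lkmo^2+\mkmo^2).
\end{equation*}
The upper constant is at most $1$. Integrating over each $\Ki$ and summing over $i=1,2,3$ then yields~\eqref{norm-equivalence:statement}, because the $\Ki$ partition $\K$. The sum over subelements on the right-hand side is simply $\Norm{\lkmo}_{\Ltwo(\K)}^2+\Norm{\mkmo}_{\Ltwo(\K)}^2$, and it is written that way only for later use.

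The main obstacle is to show that the lower constant $\tfrac12-|\gamma|$ is bounded away from zero uniformly in $h$ and over all elements. The inequality~\eqref{strict-CS} by itself only gives a per-element strict bound, and this is the point where shape-regularity must enter quantitatively. I would handle it as follows. Let $\theta$ be the angle between $\normitt$ and $\normjtt$. Then $\dev(\nbf\otimes\nbf)$ corresponds to the unit vector $(\cos2\phi,\sin2\phi)/\sqrt2$ under the isometry of trace-free symmetric matrices with $\Rbb^2$, so $\gamma=\tfrac12\cos 2\theta$ and $\tfrac12-|\gamma| = \tfrac12(1-|\cos2\theta|)$.

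Since the edge normals of a triangle make angles equal to $\pi$ minus the interior angles, shape-regularity bounds every interior angle below by some $\theta_0>0$. Consequently the three normals have pairwise angles in $[\theta_0,\pi-\theta_0]$. We can therefore choose the pair $(\itt,\jtt)$ in Lemma~\ref{lemma:dev-n1-n2-basis} so that $\theta$ is also bounded away from $\pi/2$. This works because at most one pair can be close to orthogonal: two of the interior angles cannot both be near $\pi/2$. It gives $|\cos2\theta|\le c_0<1$ with $c_0$ depending only on the shape-regularity constant, which closes the argument. The constants are independent of $\nu$, since $\nu$ never appears.
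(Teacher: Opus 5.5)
Your proof is correct and follows essentially the paper's argument: you expand $|\lkmo\dev(\normitt\otimes\normitt)+\mkmo\dev(\normjtt\otimes\normjtt)|^2$ using \eqref{dev:frob-norm} and bound the cross term by $|\dev(\normitt\otimes\normitt):\dev(\normjtt\otimes\normjtt)|$, which gives the same lower constant $\tfrac12-|\gamma|$ as the paper (the paper works in integrated form with Cauchy--Schwarz and Young, you work pointwise with the eigenvalues $\tfrac12\pm|\gamma|$). Your extra step, choosing $(\itt,\jtt)$ via the minimum-angle condition so that $\tfrac12-|\gamma|=\tfrac12(1-|\cos2\theta|)$ is bounded below uniformly, makes explicit something the paper leaves implicit: it only invokes the per-element strict inequality~\eqref{strict-CS} together with the qualitative shape-regularity remark in Lemma~\ref{lemma:dev-n1-n2-basis}, and that alone does not show the gap is independent of $h$.
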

\begin{proof}
We first show the upper bound.
The decomposition~\eqref{eps-vh:dev-dev-representation-dev},
the identity~\eqref{dev:frob-norm},
and
the triangle inequality,
give
\begin{equation} \label{norm-equivalence:leq}
\begin{split}
\Norm{\epsbfun(\vbfh)}_{\Ltwo(\K)}^2
&= \Norm{\lkmo\dev(\normitt\otimes\normitt) + \mkmo\dev(\normjtt\otimes\normjtt)}_{\Ltwo(\K)}^2 \\
&\lesssim \Norm{\lkmo}_{\Ltwo(\K)}^2 + \Norm{\mkmo}_{\Ltwo(\K)}^2
= \sum_{i=1}^3 \big(\Norm{\lkmo}_{\Ltwo(\Ki)}^2 + \Norm{\mkmo}_{\Ltwo(\Ki)}^2\big).
\end{split}
\end{equation}

We now show the lower bound.
Using the decomposition~\eqref{eps-vh:dev-dev-representation-dev},
expanding the square,
applying identity~\eqref{dev:frob-norm},
the Cauchy--Schwarz inequality,
and the Young inequality,
we obtain
\begin{equation*}
\begin{split}
\Norm{\epsbfun(\vbfh)}_{\Ltwo(\K)}^2
&= \Norm{\lkmo\dev(\normitt\otimes\normitt) + \mkmo\dev(\normjtt\otimes\normjtt)}_{\Ltwo(\K)}^2 \\
&\geq \frac12\Norm{\lkmo}_{\Ltwo(\K)}^2 + \frac12\Norm{\mkmo}_{\Ltwo(\K)}^2 \\
&\qquad        - 2  |\dev(\normitt\otimes\normitt):\dev(\normjtt\otimes\normjtt)| \Norm{\lkmo}_{\Ltwo(\K)}\Norm{\mkmo}_{\Ltwo(\K)} \\
&\geq \frac12\Norm{\lkmo}_{\Ltwo(\K)}^2 + \frac12\Norm{\mkmo}_{\Ltwo(\K)}^2 \\
&\qquad        - 2|\dev(\normitt\otimes\normitt):\dev(\normjtt\otimes\normjtt)| \Big(\frac12\Norm{\lkmo}_{\Ltwo(\K)}^2
                    + \frac12\Norm{\mkmo}_{\Ltwo(\K)}^2\Big) \\
&= \big(\frac12-|\dev(\normitt\otimes\normitt):\dev(\normjtt\otimes\normjtt)|\big)
    \sum_{i=1}^3 \big(\Norm{\lkmo}_{\Ltwo(\Ki)}^2 + \Norm{\mkmo}_{\Ltwo(\Ki)}^2 \big) .
\end{split}
\end{equation*}
Since by estimate~\eqref{strict-CS} we have
$|\dev(\normitt\otimes\normitt):\dev(\normjtt\otimes\normjtt)| < 1/2$,
the coefficient in the above display
is positive,
and we deduce
\begin{equation} \label{norm-equivalence:geq}
\Norm{\epsbfun(\vbfh)}_{\Ltwo(\K)}^2 \gtrsim  \sum_{i=1}^3 \big(\Norm{\lkmo}_{\Ltwo(\Ki)}^2 + \Norm{\mkmo}_{\Ltwo(\Ki)}^2\big).
\end{equation}
Combining the bounds~\eqref{norm-equivalence:leq} and~\eqref{norm-equivalence:geq},
the assertion follows.
\end{proof}

When needed,
we explicitly denote the constants hidden
in the norm equivalence~\eqref{norm-equivalence:statement} by
\begin{equation} \label{norm-equivalence:explicit-constants}
\ceqone\Norm{\epsbfun(\vbfh)}_{\Ltwo(\K)}
\leq \Big(\sum_{i = 1}^3\big(\Norm{\lkmo}_{\Ltwo(\Ki)}^2 + \Norm{\mkmo}_{\Ltwo(\Ki)}^2\big)\Big)^{\frac12}
\leq \Ceqone\Norm{\epsbfun(\vbfh)}_{\Ltwo(\K)}.
\end{equation}
This equivalence will be used in the stability analysis below.

\subsection{Stability analysis} \label{subsection:stability}
We now address the stability analysis of the method~\eqref{variational-formulation:discrete}.
We begin by establishing appropriate continuity and coercivity properties
of the bilinear forms
and then prove discrete inf-sup conditions.
For simplicity,
throughout we assume that the viscosity~$\nu$ is constant.

In the next result,
we show the continuity of the bilinear forms.
\begin{lemma} \label{lemma:continuity}
The bilinear forms~$a(\cdot,\cdot)$, $\bone(\cdot,\cdot)$, and~$\btwo(\cdot,\cdot)$
of~\eqref{a}--\eqref{btwo} 
are continuous.
\end{lemma}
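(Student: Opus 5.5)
The plan is to prove each continuity bound separately with respect to the norms $\Norm{\cdot}_{\Sigmah}$, $\Norm{\cdot}_{\Vh}$, and $\Norm{\cdot}_{\Qh}$ introduced at the beginning of Section~\ref{section:a-priori}. The goal is to show that there are constants, independent of $h$ (and with the $\nu$-dependence explicit), such that
\[
a(\sigmabfh,\taubfh)\le \numo\Norm{\sigmabfh}_{\Sigmah}\Norm{\taubfh}_{\Sigmah},\qquad
\bone(\vbfh,\qh)\lesssim \Norm{\vbfh}_{\Vh}\Norm{\qh}_{\Qh},\qquad
\btwo(\taubfh,\vbfh)\lesssim \Norm{\taubfh}_{\Sigmah}\Norm{\vbfh}_{\Vh},
\]
for all discrete arguments in $\Sigmahkmo\times\Vhk\times\Qhkmo$.

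\textbf{The forms $a$ and $\bone$.} For $a$, Cauchy--Schwarz in $\Ltwo$ gives the bound directly, together with the pointwise bound $|\dev\taubf|\le|\taubf|$; on $\Sigmahkmo$ we even have $\dev\taubfh=\taubfh$. For $\bone$, Cauchy--Schwarz gives $(\dive\vbfh,\qh)\le\Norm{\dive\vbfh}_{\LtwoOmega}\Norm{\qh}_{\LtwoOmega}$. Since $\dive\vbfh=\tr(\epsbfun(\vbfh))$ elementwise, we have $|\dive\vbfh|\le\sqrt d\,|\epsbfun(\vbfh)|$ pointwise. Summing over $\K\in\Tauh$ then bounds $\Norm{\dive\vbfh}_{\LtwoOmega}$ by $\sqrt d\,\Norm{\vbfh}_{\Vh}$.

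\textbf{The form $\btwo$, volume term.} We treat the two terms in~\eqref{btwo} separately. The volume term is bounded by Cauchy--Schwarz on each subelement $\Ki$, followed by summation, using that $\epsbfun(\vbfh)$ is polynomial on each macroelement $\K$. This gives
\[
\Big|\sum_{\K}\sum_i\int_{\Ki}\taubfh:\epsbfun(\vbfh)\dx\Big|\le\Norm{\taubfh}_{\LtwoOmega}\Big(\sum_{\K}\Norm{\epsbfun(\vbfh)}^2_{\Ltwo(\K)}\Big)^{1/2}.
\]

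\textbf{The form $\btwo$, edge term (main step).} For each $\F\in\Fcalh$, pick a subelement $\Ki$ with $\Fi=\F$. This is well defined because $(\taubfh)_{\normtang}$ is single valued on $\F$. Apply Cauchy--Schwarz on $\F$ and the discrete trace inequality~\eqref{trace-inequality} on $\Ki$, which is valid since $\taubfh|_{\Ki}$ is polynomial and the submesh $\Ccalh$ is shape regular. This yields
\[
\int_\F(\taubfh)_{\normtang}\jump{(\vbfh)_t}\ds\le \Norm{\taubfh}_{\Ltwo(\F)}\Norm{\jump{(\vbfh)_t}}_{\Ltwo(\F)}\lesssim h_{\Ki}^{1/2}\Norm{\taubfh}_{\Ltwo(\Ki)}\,h^{-1/2}\Norm{\jump{(\vbfh)_t}}_{\Ltwo(\F)}.
\]
By shape regularity and quasi-uniformity we have $h_{\Ki}\simeq\hK\simeq h$, so the scaling factors combine to an $O(1)$ constant. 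Summing over edges with a discrete Cauchy--Schwarz inequality then gives the bound by $\Norm{\taubfh}_{\LtwoOmega}\big(\sum_\F h^{-1}\Norm{\jump{(\vbfh)_t}}^2_{\Ltwo(\F)}\big)^{1/2}$. Each subelement is used for at most one edge, namely its external edge, so there is no overcounting.

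The only delicate point is this edge term. Its difficulty lies in making sure the trace inequality is applied on the subelements, with constants depending only on the shape regularity of $\Ccalh$, which is inherited from $\Tauh$. Once that is in place, adding the volume and edge bounds gives the stated continuity of $\btwo$.
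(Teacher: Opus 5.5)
Your proposal is correct and follows essentially the same route as the paper. You use Cauchy--Schwarz for $a(\cdot,\cdot)$ and $\bone(\cdot,\cdot)$, and for $\btwo(\cdot,\cdot)$ you combine Cauchy--Schwarz on the volume term, the pointwise bound on the normal-tangential component, the discrete trace inequality~\eqref{trace-inequality} on the subelement $\Ki$ whose external edge is $\F$, and shape-regularity/quasi-uniformity; you are in fact more explicit than the paper about the choice of $\Ki$ and about the bound $|\dive\vbfh|\le\sqrt d\,|\epsbfun(\vbfh)|$.

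There is one small slip in your edge display. The trace inequality produces $h_{\Ki}^{-1/2}$, not $h_{\Ki}^{1/2}$, so the prefactor in front of $\Norm{\taubfh}_{\Ltwo(\Ki)}\,h^{-1/2}\Norm{\jump{(\vbfh)_t}}_{\Ltwo(\F)}$ should be $(h/h_{\Ki})^{1/2}\simeq 1$. As written, the inequality would wrongly assert $\Norm{\taubfh}_{\Ltwo(\F)}\lesssim\Norm{\taubfh}_{\Ltwo(\Ki)}$ uniformly in $h$.
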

\begin{proof}
As for the bilinear forms~$a(\cdot,\cdot)$ of~\eqref{a} and~$\bone(\cdot,\cdot)$ of~\eqref{bone},
their continuity follows from the Cauchy--Schwarz inequality.

As for the bilinear form~$\btwo(\cdot,\cdot)$ of~\eqref{btwo},
bounding the normal-tangential component of the stress tensor by the stress tensor itself,
and applying
the trace-free property of the functions in~$\Sigmahkmo$,
we have
\begin{equation*}
\Norm{(\sigmabfh)_{\normtang}}_{\Ltwo(\F)}
\lesssim \Norm{\dev(\sigmabfh)}_{\Ltwo(\F)}.
\end{equation*}
Using the definition~\eqref{btwo} of~$\btwo(\cdot,\cdot)$,
the Cauchy--Schwarz inequality,
the above display,
the identity~\eqref{sym:frob-norm},
the discrete trace inequality~\eqref{trace-inequality},
and the shape-regularity of the mesh,
we obtain
\begin{equation*}
\begin{split}
\btwo(\sigmabfh,\vbfh)
&= - \sum_{\K \in \Tauh} \int_{\K} \sigmabfh : \epsbfun(\vbfh) \dx
    + \sum_{\F \in \Fcalh} \int_{\F} (\sigmabfh)_{\normtang} \jump{(\vbfh)_{\tangscalar}} \ds \\
&\lesssim \sum_{\K \in \Tauh} \Norm{\sigmabfh}_{\Ltwo(\K)} \Norm{\epsbfun(\vbfh)}_{\Ltwo(\K)}
    + \sum_{\F \in \Fcalh} \Norm{\dev(\sigmabfh)}_{\Ltwo(\F)} \Norm{\jump{(\vbfh)_{\tangscalar}}}_{\Ltwo(\F)} \\
&\lesssim  \sum_{\K \in \Tauh} \Norm{\sigmabfh}_{\Ltwo(\K)} \Norm{\epsbfun(\vbfh)}_{\Ltwo(\K)}
    + \sum_{\Ki \in \Ccalh} \Norm{\dev(\sigmabfh)}_{\Ltwo(\Ki)} h^{-\frac12} \Norm{\jump{(\vbfh)_{\tangscalar}}}_{\Ltwo(\Fi)} \\
&\lesssim \Norm{\sigmabfh}_{\Sigmah} \Norm{\vbfh}_{\Vh}
\end{split}
\end{equation*}
for all~$\vbfh$ in~$\Vhk$ and~$\sigmabfh$ in~$\Sigmahkmo$.
\end{proof}

We next introduce the discrete kernel
associated with the velocity and pressure discrete spaces
\begin{equation*}
\Zh := \{ (\taubfh, \qh) \in \Sigmahkmo \times \Qhkmo \, | \, \bone(\vbfh,\qh) + \btwo(\taubfh,\vbfh) = 0,
            \, \forall \vbfh \in \Vhk \},
\end{equation*}
and discuss the coercivity of the bilinear form~$a(\cdot,\cdot)$ on~$\Zh$
in the next result.
\begin{lemma} \label{lemma_coercivity-kernel}
The bilinear form~$a(\cdot,\cdot)$ of~\eqref{a}
is coercive on~$\Zh$, i.e.,
for all $(\taubfh,\qh)$ in~$\Zh$,
the following estimate holds true
\begin{equation*}
a(\taubfh,\taubfh) \gtrsim \frac{1}{\nu}(\Norm{\taubfh}_{\Sigmah} + \Norm{\qh}_{\Qh})^2.
\end{equation*}
\end{lemma}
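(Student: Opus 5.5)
Since every $\taubfh$ in $\Sigmahkmo$ is trace-free, $\dev\taubfh=\taubfh$, and hence $a(\taubfh,\taubfh)=\numo\Norm{\taubfh}_{\Sigmah}^2$. The coercivity therefore reduces to showing that, for $(\taubfh,\qh)\in\Zh$, the pressure is controlled by the stress:
\begin{equation*}
\Norm{\qh}_{\Qh}\lesssim\Norm{\taubfh}_{\Sigmah}.
\end{equation*}
Once this is available, $(\Norm{\taubfh}_{\Sigmah}+\Norm{\qh}_{\Qh})^2\lesssim\Norm{\taubfh}_{\Sigmah}^2=\nu\, a(\taubfh,\taubfh)$, which is the claim.

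To bound $\qh$, I will use the surjectivity $\dive\Vhk=\Qhkmo$ together with the standard stable right inverse of the divergence for the $\BDM$ pair. Since $\qh\in\LtwozeroOmega$, the continuous inf-sup condition for $H^1_0$ provides $\vbf\in H^1_0(\Omega,\Rbbd)$ with $\dive\vbf=\qh$ and $\Norm{\vbf}_{H^1}\lesssim\Norm{\qh}$. Applying the canonical $\BDM$ interpolant gives $\vbfh\in\Vhk$ with $\dive\vbfh=\qh$, owing to the commuting property and $\qh\in\Pbbkmo(\Tauh)$. This $\vbfh$ also satisfies $\Norm{\vbfh}_{\Vh}\lesssim\Norm{\vbf}_{H^1}\lesssim\Norm{\qh}$, as shown in \cite{Gopalakrishnan-Lederer-Schoberl:2020}, since the jump term is controlled by the $H^1$ interpolation estimates and the trace inequality.

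Testing the kernel relation with this $\vbfh$ gives
\begin{equation*}
\Norm{\qh}^2=\bone(\vbfh,\qh)=-\btwo(\taubfh,\vbfh)\lesssim\Norm{\taubfh}_{\Sigmah}\Norm{\vbfh}_{\Vh}\lesssim\Norm{\taubfh}_{\Sigmah}\Norm{\qh},
\end{equation*}
where the middle inequality is the continuity of $\btwo$ from Lemma~\ref{lemma:continuity}. Dividing by $\Norm{\qh}$ yields the desired bound, and all constants are independent of $h$ and $\nu$.

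The main obstacle is the $\Vh$-stability of the discrete right inverse, in particular the bound on the tangential jumps $h^{-1/2}\Norm{\jump{(\vbfh)_t}}_{\Ltwo(\F)}$. I plan to take this from the cited MCS analysis rather than reprove it. Notably, the Clough--Tocher structure of the stress space plays no role in this step, because only the velocity and pressure spaces are involved, and these coincide with those of the original method.
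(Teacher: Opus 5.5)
Your proposal is correct and follows essentially the same route as the paper, whose proof simply defers to \cite[Lemma~6.4]{Gopalakrishnan-Lederer-Schoberl:2020} combined with the continuity of $\btwo$ from Lemma~\ref{lemma:continuity}. In both, trace-freeness gives $a(\taubfh,\taubfh)=\numo\Norm{\taubfh}_{\Sigmah}^2$, and the pressure is bounded by testing the kernel relation with a $\Vh$-stable $\BDM$ right inverse of the divergence, exactly as you do.
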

\begin{proof}
The proof follows along the same lines as those of~\cite[Lemma~6.4]{Gopalakrishnan-Lederer-Schoberl:2020},
once we apply the continuity of the bilinear form~$\btwo(\cdot,\cdot)$ shown in Lemma~\ref{lemma:continuity}.
\end{proof}

We show an inf-sup condition for the bilinear form~$\btwo$ of~\eqref{btwo} on~$\Vhkz$,
by constructing a specific stress function~$\taubfh$ in~$\Sigmahkmo$
which only depends on~$\dev(\epsbfun(\vbfh))$ for any given~$\vbfh$ in~$\Vhkz$.
\begin{lemma} \label{lemma:discrete-inf-sup-bt}
For any~$\vbfh$ in~$\Vhkz$,
there exists~$\taubfh$ in~$\Sigmahkmo$ such that
\begin{subequations} \label{inf-sup:i-ii}
\begin{equation} \label{inf-sup:i}
\btwo(\taubfh,\vbfh) \gtrsim \Norm{\vbfh}_{1,\dev,h}^2
\end{equation}
\begin{equation} \label{inf-sup:ii}
\Norm{\taubfh}_{\Sigmah} \lesssim \Norm{\vbfh}_{1,\dev,h}.
\end{equation}
\end{subequations}
These estimates imply
the discrete inf-sup stability of the bilinear form~$\btwo(\cdot,\cdot)$ of~\eqref{btwo} on~$\Vhkz$,
i.e.,
for all~$\vbfh$ in $\Vhkz$,
\begin{equation} \label{discrete-inf-sup:btwo}
\sup_{\taubfh \in \Sigmahkmo} \frac{\btwo(\taubfh,\vbfh)}{\Norm{\taubfh}_{\Sigmah}}
    \gtrsim\Norm{\vbfh}_{1,h}.
\end{equation}
\end{lemma}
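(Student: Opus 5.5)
I would construct $\taubfh$ element by element on each macroelement $\K$, subelement by subelement, by prescribing its degrees of freedom~\eqref{dofs} on every $\Ki$ in $\CcalhK$. Lemma~\ref{lemma:unisolvence} makes the definition well posed, and the edge moments~\eqref{dofs:edge} glue the subelement pieces into an element of $\Sigmahkmo$. Throughout, $\vbfh\in\Vhkz$, so $\epsbfun(\vbfh)|_\K=\dev\epsbfun(\vbfh)|_\K$ has the representation~\eqref{eps-vh:dev-dev-representation} with coefficients $\lkmo,\mkmo$.

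\textbf{Edge moments.} On each $\F\in\Fcalh$ I would set $(\taubfh)_{\normtang}|_\F := h^{-1}\PizF\jump{(\vbfh)_t}$, using the fixed orientation of $\F$. This value is single-valued, so $\nt$-continuity holds. The edge term of $\btwo$ then gives exactly $\sum_\F h^{-1}\Norm{\PizF\jump{(\vbfh)_t}}^2_{\Ltwo(\F)}$.

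\textbf{Bulk moments.} I would choose the moments~\eqref{dofs:bulk-k} and~\eqref{dofs:bulk-kmo-lambda} so that $\int_{\Ki}\taubfh:\epsbfun(\vbfh)\,q\dx=-\int_{\Ki}|\epsbfun(\vbfh)|^2 q$-type identities hold, up to a controllable remainder. Concretely, write $\taubfh|_{\Ki}=\rkmoi\dev(\normi\otimes\normi)+(\Lcalkmo(\skmoFi)+\lambdai\stildekmti)\sym(\normi\otimes\tangi)$. Here $\skmoFi=2h^{-1}\PizF\jump{(\vbfh)_t}$ is already fixed, and I would pick $\rkmoi,\stildekmti$ so that the bulk pairing equals $-\Norm{\epsbfun(\vbfh)}^2_{\Ltwo(\Ki)}$ plus a term involving only the fixed edge part. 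Projecting $-\epsbfun(\vbfh)$ onto the basis of Lemma~\ref{lemma:dev-sym-basis-Ki} gives $-\epsbfun(\vbfh)|_{\Ki}=a\,\dev(\normi\otimes\normi)+b\,\sym(\normi\otimes\tangi)$ with $a,b\in\Pbbkmo(\Ki)$. I set $\rkmoi=a$. Then, by Lemma~\ref{lemma:decomposition-polynomial-spaces}, I decompose $b=\Lcalkmo(b^F)+\lambdai\tilde b$ and take $\stildekmti=\tilde b$.

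With these choices, $-\int_{\Ki}\taubfh:\epsbfun(\vbfh)\dx=\Norm{\epsbfun(\vbfh)}^2_{\Ltwo(\Ki)}+\tfrac12\int_{\Ki}(\Lcalkmo(b^F)-\Lcalkmo(\skmoFi))\,b\dx$. Estimating the mismatch by Cauchy--Schwarz, Young, and~\eqref{stability-Lcal} gives the bound $\tfrac12\Norm{\epsbfun(\vbfh)}^2_{\Ltwo(\Ki)}+C\Norm{\Lcalkmo(b^F)}^2+C\hKi\Norm{\skmoFi}^2_{\Ltwo(\Fi)}$. The last contribution is $\lesssim h^{-1}\Norm{\PizF\jump{(\vbfh)_t}}^2$, which is harmless if I rescale the edge choice by a small parameter $\delta$.

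\textbf{Main obstacle.} The term $\Norm{\Lcalkmo(b^F)}$, i.e.\ the $\nt$-trace of $\epsbfun(\vbfh)$ extended from $\Fi$, must be absorbed. This is where the macroelement and Lemma~\ref{lemma:norm-equivalence} enter. Instead of matching $\epsbfun(\vbfh)$ literally, I would use the freedom in $\rkmoi$ and $\stildekmti$, together with the representation via $\lkmo,\mkmo$ on the whole of $\K$, to arrange a cancellation. Because $\lkmo,\mkmo$ are polynomials on the macroelement, their edge-extended parts are controlled by $\sum_i(\Norm{\lkmo}^2_{\Ltwo(\Ki)}+\Norm{\mkmo}^2_{\Ltwo(\Ki)})\le\Ceqone^2\Norm{\epsbfun(\vbfh)}^2_{\Ltwo(\K)}$. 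Summing over $i$ should therefore give positivity with constant depending on $\ceqone,\Ceqone$. Failing a clean cancellation, the fallback is to weight the edge DoF choice by $\delta$ and use a weighted Young inequality, accepting a constant depending on the strict bound~\eqref{strict-CS}.

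\textbf{Conclusion.} Estimate~\eqref{inf-sup:ii} follows from the DoF choices via scaling,~\eqref{stability-Lcal}, and~\eqref{norm-equivalence:statement}. Estimate~\eqref{discrete-inf-sup:btwo} then follows from~\eqref{inf-sup:i-ii} and~\eqref{norm-equivalence-h-dev}.
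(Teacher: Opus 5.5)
Your edge moments coincide with the paper's. The gap is in the bulk step: you identify the decisive difficulty there but do not resolve it, and neither remedy you propose works. With your choice, $\taubfh|_{\Ki}=-\epsbfun(\vbfh)+\big(\Lcalkmo(\skmoFi)-\Lcalkmo(b^F)\big)\sym(\normi\otimes\tangi)$. Hence, with the sign corrected,
\begin{equation*}
-\int_{\Ki}\taubfh:\epsbfun(\vbfh)\dx=\Norm{\epsbfun(\vbfh)}^2_{\Ltwo(\Ki)}-\frac12\int_{\Ki}\Lcalkmo(b^F)\,b\dx+\frac12\int_{\Ki}\Lcalkmo(\skmoFi)\,b\dx .
\end{equation*}
The middle term depends only on $\epsbfun(\vbfh)$. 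It is of the same order as $\Norm{\epsbfun(\vbfh)}^2_{\Ltwo(\Ki)}$ and carries no small factor. Weighting the edge moments by $\delta$ does not touch it. Bounding $\Norm{\Lcalkmo(b^F)}$ from above by $\Ceqone\Norm{\epsbfun(\vbfh)}_{\Ltwo(\K)}$ is an estimate in the wrong direction. For $k=1$ the obstruction is explicit: $b=\Lcalkmo(b^F)$ is constant, there is no $\tilde b$, and the right-hand side collapses to $\frac12\Norm{a}^2_{\Ltwo(\Ki)}$ plus the jump cross term. If $a=0$, i.e.\ $\dev(\normi\otimes\normi):\epsbfun(\vbfh)=0$ on $\Ki$, then no choice of the degrees of freedom on $\Ki$ gives any control of $\epsbfun(\vbfh)$ there. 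The only handle on the $\sym(\normi\otimes\tangi)$-component, the edge moment, is already spent on the jump. So positivity cannot come from a subelement-wise argument, nor from an unspecified ``cancellation''.

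The missing ingredient is a macroelement \emph{lower} bound. On $\Ki$ you control only $\dev(\normi\otimes\normi):\epsbfun(\vbfh)$. However, the three subelements of $\K$ carry three different normals, and $\epsbfun(\vbfh)|_{\K}$ is a single polynomial on $\K$. By scaling, $\Norm{\dev(\normi\otimes\normi):\epsbfun(\vbfh)}_{\Ltwo(\K)}\lesssim\Norm{\dev(\normi\otimes\normi):\epsbfun(\vbfh)}_{\Ltwo(\Ki)}$. Take $\itt,\jtt$ from Lemma~\ref{lemma:dev-n1-n2-basis} and set $D_{\itt\jtt}:=\dev(\normitt\otimes\normitt):\dev(\normjtt\otimes\normjtt)$. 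Expanding squares and applying Young gives
\begin{equation*}
\sum_{i=1}^3\Norm{\dev(\normi\otimes\normi):\epsbfun(\vbfh)}^2_{\Ltwo(\K)}\geq\Big(\frac12-|D_{\itt\jtt}|\Big)^2\big(\Norm{\lkmo}^2_{\Ltwo(\K)}+\Norm{\mkmo}^2_{\Ltwo(\K)}\big)\gtrsim\Norm{\epsbfun(\vbfh)}^2_{\Ltwo(\K)},
\end{equation*}
using~\eqref{strict-CS} and~\eqref{norm-equivalence:explicit-constants}. This is where the strict Cauchy--Schwarz bound is genuinely needed. The paper's construction then runs as follows:
\begin{itemize}
\item It sets $\rkmoi=-2\gamma\,\dev(\normi\otimes\normi):\epsbfun(\vbfh)$ with $\gamma$ large.
\item It sets the $\lambdai$-weighted moments to zero, so that $\Norm{\skmoi}_{\Ltwo(\Ki)}\lesssim h^{-1/2}\Norm{\PizFi\jump{(\vbfh)_{\tangscalar}}}_{\Ltwo(\Fi)}$.
\item It treats the entire pairing of $\skmoi\sym(\normi\otimes\tangi)$ with $\epsbfun(\vbfh)$ as a cross term, bounded by a weighted Young inequality and absorbed by the $\gamma$-term through the display above.
\end{itemize}
Your own choice of $\stildekmti$ is actually harmless. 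If you take $\rkmoi=\gamma a$ instead of $a$ and add the display above, the extra term $\frac12\int_{\Ki}\lambdai\tilde b\,b\dx$ is $\lesssim\Norm{\epsbfun(\vbfh)}^2_{\Ltwo(\Ki)}$ and is absorbed for $\gamma$ large.
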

\begin{proof}
We detail the proof for $k-1 \geq 1$.
The lowest-order case $k-1 = 0$ can be treated similarly
and is simpler,
since all the estimates below that rely on the combination of
the degrees of freedom~\eqref{dofs:edge} and~\eqref{dofs:bulk-kmo-lambda}
can be directly obtained by using just the edge moments~\eqref{dofs:edge}.

Let~$\vbfh$ be any given function in~$\Vhkz$;
we proceed by constructing a stress function~$\taubfh$ in~$\Sigmahkmo$
that satisfies the estimates~\eqref{inf-sup:i}--\eqref{inf-sup:ii}.
Let~$\gamma > 0$ be a positive constant to be fixed later.
We define~$\taubfh$
by fixing the corresponding degrees of freedom in~\eqref{dofs} as follows.
We impose:
\begin{itemize}
\item on every edge~$\F$ in~$\Fcalh$, for all~$\piF$ in~$\Pbbkmo(\F)$,
\begin{equation}  \label{stability-dofs:1}
\int_{\F} (\taubfh)_{\normtang} \piF \ds = \frac{1}{h} \int_{\F} \PizF\jump{(\vbfh)_{\tangscalar}}\piF \ds;
\end{equation}
\item on every subelement~$\Ki$ in~$\Ccalh$, for all~$\qi$ in~$\Pbbkmo(\Ki)$,
\begin{equation} \label{stability-dofs:2}
\int_{\Ki} \taubfh : \dev(\normi\otimes\normi) \qi \dx = - \gamma\int_{\Ki} \epsbfun(\vbfh) : \dev(\normi\otimes\normi) \qi \dx;
\end{equation}
\item on every subelement~$\Ki$ in~$\Ccalh$, for all~$\wi$ in~$\Pbbkmt(\Ki)$,
\begin{equation} \label{stability-dofs:3}
\int_{\Ki} \taubfh : \sym(\normi\otimes\tangi)\wi\lambdai \dx = 0.
\end{equation}
\end{itemize}
Now let~$\K$ be any fixed element in~$\Tauh$,
and let~$\Ki$ be in~$\CcalhK$.
Since~$\taubfh$ belongs to~$\Sigmahkmo$,
as discussed in Lemma~\ref{lemma:dev-sym-basis-Ki},
there exist polynomials~$\rkmoi$ and~$\skmoi$ in~$\Pbbkmo(\Ki)$
such that the restriction of~$\taubfh$ on~$\Ki$
admits the following representation:
\begin{equation} \label{tau-decomposition}
\taubfh|_{\Ki} = \rkmoi\dev(\normi\otimes\normi) + \skmoi\sym(\normi\otimes\tangi).
\end{equation} 
Similarly,
since~$\epsbfun(\vbfh)$ belongs to~$\Sigmahkmo(\K)$,
as discussed in Corollary~\ref{corollary:decomposition-epsvh},
there exist polynomials~$\lkmo$ and $\mkmo$ in~$\Pbbkmo(\K)$
such that the restriction of~$\epsbfun(\vbfh)$ on~$\K$
admits the following representation:
\begin{equation} \label{eps-decomposition}
\epsbfun(\vbfh)|_{\K} = \lkmo\dev(\normitt\otimes\normitt) + \mkmo\dev(\normjtt\otimes\normjtt).
\end{equation}
Since the subelements~$\Ki \in \CcalhK$ are subsets of $\K$,
we can restrict the representation~\eqref{eps-decomposition}
to each~$\Ki$.

We now give alternative representations
for the coefficients of the decomposition~\eqref{tau-decomposition} above,
starting from~$\skmoi$.
Since $(\taubfh)_{\normtang}|_{\F}$
belongs to~$\Pbbkmo(\F)$,
identity~\eqref{stability-dofs:1},
entails that,
for all~$\F$ in~$\Fcalh$,
\begin{equation*}
\begin{split}
\frac{1}{h}\PizF\jump{(\vbfh)_{\tangscalar}}
\overset{\eqref{stability-dofs:1}}{=}
(\taubfh)_{\normtang}|_{\F} 
&\overset{\eqref{tau-decomposition}}{=}
\big(\rkmoi\dev(\normi\otimes\normi) + \skmoi\sym(\normi\otimes\tangi)\big)_{\normtang}|_{\F} 
\overset{\eqref{orthogonality:dev-sym},\eqref{sym:frob-norm}}{=}
\frac{1}{2}\skmoi|_{\F}.
\end{split}
\end{equation*}
In particular,
for each~$\Ki$ in~$\CcalhK$
with corresponding external edge~$\Fi$ in~$\Fcalh$,
it holds
\begin{equation}  \label{skmoi-value-on-edge}
\frac{1}{h}\PizFi\jump{(\vbfh)_{\tangscalar}}
= \frac{1}{2}\skmoi|_{\Fi}.
\end{equation}
Since~$\skmoi$ belongs to~$\Pbbkmo(\Ki)$,
the above display combined with the decomposition~\eqref{decomposition-polynomial-spaces},
yield the existence of a polynomial~$\stildekmti$ in~$\Pbbkmt(\Ki)$
such that
\begin{equation} \label{skmoi-stildekmti}
\skmoi = \frac{2}{h}\Lcalzero(\PizFi\jump{(\vbfh)_{\tangscalar}}) + \stildekmti\lambdai,
\end{equation}
where
$\Lcalzero(\PizFi\jump{(\vbfh)_{\tangscalar}})$ denotes
the constant extension of $\PizFi\jump{(\vbfh)_{\tangscalar}}$ from~$\Fi$ over~$\Ki$
as defined in~\eqref{Lcal}.

We now also give a representation for the polynomial coefficient~$\rkmoi$ of~\eqref{tau-decomposition}.
Inserting the \eqref{tau-decomposition}
in~\eqref{stability-dofs:2},
and applying the orthogonality property~\eqref{orthogonality:dev-sym},
we obtain that,
for all~$\qi$ in $\Pbbkmo(\Ki)$,
the following identity holds true
\begin{equation*} \label{rkmoi-integral-value}
\int_{\Ki} \rkmoi \dev(\normi\otimes\normi) : \dev(\normi\otimes\normi) \qi \dx = - \gamma\int_{\Ki} \epsbfun(\vbfh) : \dev(\normi\otimes\normi) \qi \dx.
\end{equation*}
Since the polynomial coefficient $\rkmoi$
belongs to~$\Pbbkmo(\Ki)$
and the symmetric gradient $\epsbfun(\vbfh)|_{\Ki}$
and the tensor $\dev(\normi\otimes\normi)$
have entries in~$\Pbbkmo(\Ki)$, and~$\Pbbz(\Ki)$,
respectively,
the above display
combined with the identity~\eqref{dev:frob-norm}
yields
\begin{equation} \label{rkmoi-value}
\begin{split}
\rkmoi
&= -2\gamma \dev(\normi\otimes\normi) : \epsbfun(\vbfh)|_{\Ki}.
\end{split}
\end{equation}

We are now ready to prove the estimates in~\eqref{inf-sup:i-ii},
starting with~\eqref{inf-sup:i}.
Plugging~$\taubfh$ and~$\vbfh$
in the bilinear form~$\btwo(\cdot,\cdot)$ of~\eqref{btwo},
we have
\begin{equation*} \label{bt-Ione-Itwo}
\begin{split}
\btwo(\taubfh,\vbfh) 
= -\sum_{\K \in \Tauh} \sum_{i=1}^3 \int_{\Ki} \taubfh : \epsbfun(\vbfh) \dx
    + \sum_{\F \in \Fcalh} \int_{\F} (\taubfh)_{\normtang}\jump{(\vbfh)_{\tangscalar}} \ds 
=: \Ione + \Itwo.
\end{split}
\end{equation*}

We start by estimating~$\Ione$.
Consider the constants
$B_{ij} := \dev(\normi\otimes\normi):\sym(\normj\otimes\tangj)$;
in particular,
Cauchy--Schwarz inequality gives~$|B_{ij}| \leq 1$.
Note that below,
one of the two indices will always be fixed to~$\itt$ or~$\jtt$
of Lemma~\ref{lemma:dev-n1-n2-basis}.
Using the representations~\eqref{tau-decomposition} of~$\taubfh$
and~\eqref{eps-decomposition} of~$\epsbfun(\vbfh)$  on each~$\Ki$,
the orthogonality~\eqref{orthogonality:dev-sym},
and the identity~\eqref{rkmoi-value},
we have
\begin{equation} \label{Ione}
\begin{split}
-\sum_{i=1}^3& \int_{\Ki} \taubfh : \epsbfun(\vbfh) \dx 
\overset{\eqref{tau-decomposition}}{=}
-\sum_{i=1}^3 \int_{\Ki} (\rkmoi\dev(\normi\otimes\normi)+\skmoi\sym(\normi\otimes\tangi)) : \epsbfun(\vbfh) \dx \\
&\overset{\eqref{orthogonality:dev-sym},\eqref{eps-decomposition}}{=} 
-\sum_{i=1}^3 \int_{\Ki} 
   \rkmoi\,(\dev(\normi\otimes\normi):\epsbfun(\vbfh))
   + \skmoi\,(\lkmo B_{i\itt} + \mkmo B_{i\jtt}) \dx \\
&\overset{\eqref{rkmoi-value}}{=}
\sum_{i=1}^3 \int_{\Ki} 
   2\gamma(\dev(\normi\otimes\normi):\epsbfun(\vbfh))^2 
   \dx
   -\sum_{i=1}^3 \int_{\Ki} \skmoi\,(\lkmo B_{i\itt} + \mkmo B_{i\jtt}) \dx\\
&=: {\IKone} + {\IKtwo}.
\end{split}
\end{equation}
We estimate the two terms on the right-hand side separately.

We first estimate~$\IKone$.
Since~$\dev(\normi\otimes\normi):\epsbfun(\vbfh)$ is a polynomial,
a scaling argument combined with~$|\Ki|=|K|/3$
and the shape-regularity assumption
yield
that there exists a positive constant~$\Ceqtwo$ such that
\begin{equation*}
\Norm{\dev(\normi\otimes\normi):\epsbfun(\vbfh)}_{\Ltwo(\K)}
\leq \Ceqtwo \Norm{\dev(\normi\otimes\normi):\epsbfun(\vbfh)}_{\Ltwo(\Ki)},
\end{equation*}
which implies
\begin{equation} \label{IKone:first-estimate}
\IKone 
= \sum_{i=1}^3 2\gamma
  \Norm{\dev(\normi\otimes\normi):\epsbfun(\vbfh)}_{\Ltwo(\Ki)}^2
\geq 2\gamma\Ceqtwo^2 \sum_{i=1}^3  
  \Norm{\dev(\normi\otimes\normi):\epsbfun(\vbfh)}_{\Ltwo(\K)}^2.
\end{equation}
Consider the constants
$D_{ij} := \dev(\normi\otimes\normi):\dev(\normj\otimes\normj)$;
with this notation,
identity~\eqref{dev:frob-norm} reads as $|D_{ii}|=1/2$.
Note that below,
the two indices will always be fixed to~$\itt$ and~$\jtt$ of Lemma~\ref{lemma:dev-n1-n2-basis}.
Neglecting a non-negative term,
using the representation~\eqref{eps-decomposition} of~$\epsbfun(\vbfh)$ on~$\K$
and the definition of~$D_{ij}$,
and expanding the squares,
we obtain
\begin{equation} \label{sumdev-eps}
\begin{split}
\sum_{i = 1}^3&\Norm{\dev(\normi\otimes\normi):\epsbfun(\vbfh)}_{\Ltwo(\K)}^2 
\geq \sum_{i \in\itt,\jtt}\Norm{\dev(\normi\otimes\normi):\epsbfun(\vbfh)}_{\Ltwo(\K)}^2 \\
&\overset{\eqref{eps-decomposition}}{=} 
\sum_{i \in\itt,\jtt}\Norm{\dev(\normi\otimes\normi):(\lkmo\dev(\normitt\otimes\normitt)+\mkmo\dev(\normjtt\otimes\normjtt))}_{\Ltwo(\K)}^2 \\
&=\Norm{\lkmo D_{\itt\itt} + \mkmo D_{\itt\jtt}}_{\Ltwo(\K)}^2 
     + \Norm{\lkmo D_{\itt\jtt}+ \mkmo D_{\jtt\jtt}}_{\Ltwo(\K)}^2 \\
&=\Norm{\frac12\lkmo + \mkmo D_{\itt\jtt}}_{\Ltwo(\K)}^2 
     + \Norm{\lkmo D_{\itt\jtt}+ \frac12\mkmo}_{\Ltwo(\K)}^2 \\
&= \frac14\Norm{\lkmo}^2_{\Ltwo(\K)} + |D_{\itt\jtt}|^2\Norm{\mkmo}^2_{\Ltwo(\K)} + \int_\K\lkmo\mkmo D_{\itt\jtt}\dx \\
        &\quad +|D_{\itt\jtt}|^2\Norm{\lkmo}^2_{\Ltwo(\K)} + \frac14\Norm{\mkmo}^2_{\Ltwo(\K)} +\int_\K\lkmo\mkmo D_{\itt\jtt}\dx \\
&=\Big(\frac14+|D_{\itt\jtt}|^2\Big)(\Norm{\lkmo}^2_{\Ltwo(\K)} + \Norm{\mkmo}^2_{\Ltwo(\K)})+ 2\int_\K\lkmo\mkmo D_{\itt\jtt}\dx \\
\end{split}
\end{equation}
We estimate the last term on the right-hand side.
The Cauchy--Schwarz inequality and
the Young inequality
yield
\begin{equation*}
\begin{split}
2\int_\K\lkmo\mkmo D_{\itt\jtt}\dx
&\geq - 2\int_\K|\lkmo\mkmo D_{\itt\jtt}|\dx 
\geq - 2|D_{\itt\jtt}|\Norm{\lkmo}_{\Ltwo(\K)}\Norm{\mkmo}_{\Ltwo(\K)} \\
&\geq - 2|D_{\itt\jtt}|\big(\frac12\Norm{\lkmo}^2_{\Ltwo(\K)} + \frac12\Norm{\mkmo}^2_{\Ltwo(\K)}\big).
\end{split}
\end{equation*}
Using the estimate above in~\eqref{sumdev-eps}
and applying Lemma~\ref{lemma:norm-equivalence}
with the notation of~\eqref{norm-equivalence:explicit-constants},
gives
\begin{equation*}
\begin{split}
&\sum_{i = 1}^3\Norm{\dev(\normi\otimes\normi):\epsbfun(\vbfh)}_{\Ltwo(\K)}^2 
\geq \Big(\frac14+|D_{\itt\jtt}|^2-|D_{\itt\jtt}|\Big)(\Norm{\lkmo}^2_{\Ltwo(\K)} + \Norm{\mkmo}^2_{\Ltwo(\K)}) \\
&\qquad= \Big(\frac12-|D_{\itt\jtt}|\Big)^2(\Norm{\lkmo}^2_{\Ltwo(\K)} + \Norm{\mkmo}^2_{\Ltwo(\K)})
\geq \Big(\frac12-|D_{\itt\jtt}|\Big)^2\ceqone^2\Norm{\epsbfun(\vbfh)}^2_{\Ltwo(\K)} \\
&\qquad=: \Ceqthree^2\Norm{\epsbfun(\vbfh)}^2_{\Ltwo(\K)}.
\end{split}
\end{equation*}
Note that due to~\eqref{strict-CS}, $|D_{\itt\jtt}|<1/2$ and
consequently, $\Ceqthree$ is a positive constant.
Inserting the above display in~\eqref{IKone:first-estimate},
we obtain
\begin{equation} \label{IKone}
\IKone \geq 2\gamma\Ceqtwo^2\Ceqthree^2\Norm{\epsbfun(\vbfh)}_{\Ltwo(\K)}^2.
\end{equation}

We now estimate~$\IKtwo$.
Inserting the representations~\eqref{tau-decomposition} of~$\taubfh$
and~\eqref{skmoi-stildekmti} of~$\skmoi$ on~$\Ki$
in identity~\eqref{stability-dofs:3},
we obtain that,
for all~$\wi$ in~$\Pbbkmt(\Ki)$
it holds
\begin{equation} \label{zero-sum-Lcal-skmoi}
\begin{split}
0
&\overset{\eqref{stability-dofs:3}}{=}
\int_{\Ki} \taubfh : \sym(\normi\otimes\tangi)\wi\lambdai \dx \\
&\overset{\eqref{tau-decomposition}}{=}
\int_{\Ki} (\rkmoi\dev(\normi\otimes\normi) + \skmoi\sym(\normi\otimes\tangi)) : \sym(\normi\otimes\tangi)\wi\lambdai \dx \\
&\overset{\eqref{orthogonality:dev-sym},\eqref{skmoi-stildekmti}}{=}
\int_{\Ki} 0 + \big(\frac{2}{h}\Lcalzero(\PizFi\jump{(\vbfh)_{\tangscalar}}) + \stildekmti\lambdai\big) \sym(\normi\otimes\tangi) : \sym(\normi\otimes\tangi)\wi\lambdai \dx \\
&\overset{\eqref{sym:frob-norm}}{=}
\frac12 \int_{\Ki} \big(\frac{2}{h}\Lcalzero(\PizFi\jump{(\vbfh)_{\tangscalar}}) + \stildekmti\lambdai\big)\wi\lambdai \dx.
\end{split}
\end{equation}
Choosing
$\wi = \stildekmti$ in the above display
and applying
the Cauchy--Schwarz inequality
and the stability estimate~\eqref{stability-Lcal} of the constant extension operator~$\Lcalzero$,
give
\begin{equation*}
\begin{split}
\Norm{\stildekmti\lambdai}_{\Ltwo(\Ki)}^2
&\overset{\eqref{zero-sum-Lcal-skmoi}}{\lesssim}
\frac{1}{h}\int_{\Ki}|\Lcalzero(\PizFi\jump{(\vbfh)_{\tangscalar}})\stildekmti\lambdai| \dx \\
&\leq \frac{1}{h}\Norm{\Lcalzero(\PizFi\jump{(\vbfh)_{\tangscalar}})}_{\Ltwo(\Ki)}\Norm{\stildekmti\lambdai}_{\Ltwo(\Ki)} \\
&\overset{\eqref{stability-Lcal}}{\lesssim}
h^{-\frac12}\Norm{\PizFi\jump{(\vbfh)_{\tangscalar}}}_{\Ltwo(\Fi)}\Norm{\stildekmti\lambdai}_{\Ltwo(\Ki)}.
\end{split} 
\end{equation*}
Then,
taking the $\Ltwo$ norm on both sides of the identity~\eqref{skmoi-stildekmti}
and applying 
the triangle inequality,
the above display,
the stability estimate~\eqref{stability-Lcal},
and the shape-regularity assumption,
yield
the existence of a positive constant~$\Ctilde$
such that
\begin{equation} \label{skmoi-leq-PizF}
\begin{split}
\Norm{\skmoi}_{\Ltwo(\Ki)}
&\leq\Norm{\stildekmti\lambdai}_{\Ltwo(\Ki)} + \frac{2}{h}\Norm{\Lcalzero(\PizFi\jump{(\vbfh)_{\tangscalar}})}_{\Ltwo(\Ki)} \\
&\leq \Ctilde h^{-\frac12}\Norm{\PizFi\jump{(\vbfh)_{\tangscalar}}}_{\Ltwo(\Fi)}.
\end{split}
\end{equation}
Considering now the term~$\IKtwo$ in~\eqref{Ione},
applying the Cauchy--Schwarz inequality,
the triangle inequality,
the estimates~\eqref{skmoi-leq-PizF}
and~$|B_{ij}| \leq 1$ for~$ij = \{i\itt,i\jtt\}$,
the weighted Young inequality with parameter~$\delta > 0$,
and Lemma~\ref{lemma:norm-equivalence}
with the notation of~\eqref{norm-equivalence:explicit-constants},
we have
\begin{equation} \label{IKtwo}
\begin{split}
|\IKtwo|
&\leq \sum_{i=1}^3 \Norm{\skmoi}_{\Ltwo(\Ki)} \Norm{\lkmo B_{i\itt} + \mkmo B_{i\jtt}}_{\Ltwo(\Ki)} \\
&\leq \sum_{i=1}^3 \Norm{\skmoi}_{\Ltwo(\Ki)} \big(\Norm{\lkmo B_{i\itt}}_{\Ltwo(\Ki)} + \Norm{\mkmo B_{i\jtt}}_{\Ltwo(\Ki)}\big) \\
&\leq \sum_{i=1}^3 \Ctilde \frac{1}{h^{\frac12}} \Norm{\PizFi\jump{(\vbfh)_{\tangscalar}}}_{\Ltwo(\Fi)} \big(\Norm{\lkmo}_{\Ltwo(\Ki)} + \Norm{\mkmo}_{\Ltwo(\Ki)}\big) \\
&\leq \Ctilde\frac{1}{2\delta}\sum_{i=1}^3 \frac{1}{h} \Norm{\PizFi\jump{(\vbfh)_{\tangscalar}}}^2_{\Ltwo(\Fi)}
        + \delta\sum_{i=1}^3 \big(\Norm{\lkmo}_{\Ltwo(\Ki)}^2 + \Norm{\mkmo}_{\Ltwo(\Ki)}^2\big) \\
&= \frac{\Ctilde}{2\delta}\sum_{i=1}^3 \frac{1}{h} \Norm{\PizFi\jump{(\vbfh)_{\tangscalar}}}^2_{\Ltwo(\Fi)}
        + \delta\Ceqone^2\Norm{\epsbfun(\vbfh)}_{\Ltwo(\K)}^2.
\end{split}
\end{equation}
Summing over the elements~$\K$ in~$\Tauh$
and plugging~\eqref{IKone} and~\eqref{IKtwo} in~\eqref{Ione},
since one edge in~$\Fcalh$ is shared at most by two elements in~$\Tauh$,
gives
\begin{equation} \label{Ione-final}
\begin{split}
\Ione
&\geq \sum_{\K\in \Tauh}(\IKone - |\IKtwo|) \\
&\geq \Big(2\gamma\Ceqtwo^2\Ceqthree^2-\delta\Ceqone^2\Big) \sum_{\K\in \Tauh} \Norm{\epsbfun(\vbfh)}_{\Ltwo(\K)}^2
            - \frac{\Ctilde}{\delta}\sum_{\F\in\Fcalh}\frac{1}{h}{\Norm{\PizF\jump{(\vbfh)_{\tangscalar}}}}_{\Ltwo(\F)}^2.
\end{split}
\end{equation}

We now estimate~$\Itwo$.
Using the identity~\eqref{skmoi-value-on-edge}
and the~$\Ltwo$-orthogonality of~$\PizF$,
we have
\begin{equation} \label{Itwo}
\Itwo 
= \sum_{\F \in \Fcalh} \frac{1}{h}\int_\F \PizF\jump{(\vbfh)_{\tangscalar}} 
  \cdot \jump{(\vbfh)_{\tangscalar}} \ds
= \sum_{\F \in \Fcalh} \frac{1}{h}
  \Norm{\PizF\jump{(\vbfh)_{\tangscalar}}}_{\Ltwo(\F)}^2.
\end{equation}

Combining the estimates~\eqref{Ione-final} and~\eqref{Itwo}
for~$\Ione$ and~$\Itwo$,
gives
\begin{equation*}
\begin{split}
\btwo(\taubfh,&\vbfh)
= \Ione + \Itwo \geq \sum_{\K\in\Tauh}(\IKone - |\IKtwo|) + \Itwo \\
&\geq \Big(2\gamma\Ceqtwo^2\Ceqthree^2-\delta\Ceqone^2\Big)\sum_{\K \in \Tauh}\Norm{\epsbfun(\vbfh)}_{\Ltwo(\K)}^2
  + \Big(1 - \frac{\Ctilde}{\delta}\Big)
  \sum_{\F \in \Fcalh}
  \frac{1}{h}\Norm{\PizF\jump{(\vbfh)_{\tangscalar}}}_{\Ltwo(\F)}^2.
\end{split}
\end{equation*}
By choosing the positive parameters~$\delta$ and~$\gamma$ as
\begin{equation} \label{delta-gamma}
\delta = 2\Ctilde,
\quad \text{and}\quad
\gamma = 2\Ctilde\Big(\frac{\Ceqone}{\Ceqtwo\Ceqthree}\Big)^2,
\end{equation}
and using the identity~\eqref{deveps-equal-eps},
we conclude
\begin{equation*}
\btwo(\taubfh,\vbfh)
\gtrsim  \sum_{\K \in \Tauh} \Norm{\dev(\epsbfun(\vbfh))}_{\Ltwo(\K)}^2
  + \sum_{\F \in \Fcalh}
  \frac{1}{h}\Norm{\PizF\jump{(\vbfh)_{\tangscalar}}}_{\Ltwo(\F)}^2,
\end{equation*}
which yields estimate~\eqref{inf-sup:i}.

We next show~\eqref{inf-sup:ii}.
Using the representation~\eqref{tau-decomposition} of~$\taubfh$
and the orthogonality property~\eqref{orthogonality:dev-sym} twice and
choosing~$\qi = \rkmoi$ in~\eqref{stability-dofs:2},
applying
the estimate~\eqref{skmoi-leq-PizF},
the Cauchy--Schwarz inequality,
the weighted Young inequality with positive parameter~$\eta >0$,
and adding some non-negative terms,
yield
\begin{equation} \label{intermediate-tauhnorm}
\begin{split}
&\Norm{\taubfh}_{\Ltwo(\K)}^2
\overset{\eqref{tau-decomposition}}{\leq}
\sum_{i=1}^3 \Norm{\rkmoi\dev(\normi\otimes\normi)}_{\Ltwo(\Ki)}^2 + \sum_{i=1}^3\Norm{\skmoi\sym(\normi\otimes\tangi)}_{\Ltwo(\Ki)}^2 \\
&= \sum_{i=1}^3 \int_{\Ki}\rkmoi\dev(\normi\otimes\normi):\dev(\normi\otimes\normi)\rkmoi\dx + \sum_{i=1}^3\Norm{\skmoi\sym(\normi\otimes\tangi)}_{\Ltwo(\Ki)}^2 \\
&\overset{\eqref{stability-dofs:2},\eqref{sym:frob-norm}}{=}
\sum_{i=1}^3 - \gamma\int_{\Ki} \epsbfun(\vbfh):\dev(\normi\otimes\normi)\rkmoi \dx
    + \frac12\sum_{i=1}^3 \Norm{\skmoi}_{\Ltwo(\Ki)}^2 \\
&\overset{\eqref{skmoi-value-on-edge}}{\leq}
\sum_{i=1}^3 \gamma\Norm{\epsbfun(\vbfh)}_{\Ltwo(\K)} \Norm{\rkmoi \dev(\normi\otimes\normi)}_{\Ltwo(\Ki)}
    +  \frac{\Ctilde}{2}\sum_{i=1}^3\frac{1}{h} \Norm{\PizFi\jump{(\vbfh)_{\tangscalar}}}^2_{\Ltwo(\Fi)} \\
&{\leq}
\frac{\eta}{2}\gamma^2\Norm{\epsbfun(\vbfh)}^2_{\Ltwo(\K)}
    +  \frac{\Ctilde}{2}\sum_{i=1}^3 \frac{1}{h} \Norm{\PizFi\jump{(\vbfh)_{\tangscalar}}}^2_{\Ltwo(\Fi)} \\
&\qquad    + \sum_{i=1}^3 \frac{1}{2\eta}(\Norm{\rkmoi \dev(\normi\otimes\normi)}^2_{\Ltwo(\Ki)}
        + \Norm{\skmoi \sym(\normi\otimes\tangi)}^2_{\Ltwo(\Ki)}).
\end{split}
\end{equation}
Then,
combining the above display
with
$\dev(\normi\otimes\normi) : \skmoi \sym(\normi\otimes\tangi)= 0$
due to the orthogonality property~\eqref{orthogonality:dev-sym},
we obtain
\begin{equation*}
\begin{split}
\Norm{\taubfh}_{\Ltwo(\K)}^2
&\overset{\eqref{intermediate-tauhnorm},\eqref{orthogonality:dev-sym}}{\leq}
\frac{\eta}{2}\gamma^2\Norm{\epsbfun(\vbfh)}^2_{\Ltwo(\K)}
    +  \frac{\Ctilde}{2}\sum_{i=1}^3 \frac{1}{h} \Norm{\PizFi\jump{(\vbfh)_{\tangscalar}}}^2_{\Ltwo(\Fi)} \\
&\qquad    + \sum_{i=1}^3 \frac{1}{2\eta}\Big(
            \Norm{\rkmoi \dev(\normi\otimes\normi)}^2_{\Ltwo(\Ki)}
          + \Norm{\skmoi \sym(\normi\otimes\tangi)}^2_{\Ltwo(\Ki)} \\
&\qquad   + 2\int_{\Ki} \rkmoi \dev(\normi\otimes\normi) : \skmoi \sym(\normi\otimes\tangi) \dx\Big) \\ 
&= \frac{\eta}{2}\gamma^2\Norm{\epsbfun(\vbfh)}^2_{\Ltwo(\K)}
    + \frac{\Ctilde}{2} \sum_{i=1}^3 \frac{1}{h} \Norm{\PizFi\jump{(\vbfh)_{\tangscalar}}}^2_{\Ltwo(\Fi)}
    + \sum_{i=1}^3 \frac{1}{2\eta}\Norm{\taubfh}^2_{\Ltwo(\Ki)},
\end{split}
\end{equation*}
where the constant~$\gamma$ is fixed as in~\eqref{delta-gamma}.
Using the identity~\eqref{deveps-equal-eps}
in the above display
yields
\begin{equation*} \label{upper-bound-taubfh}
\big(1-\frac{1}{2\eta}\big) \Norm{\taubfh}_{\Ltwo(\K)}^2
\leq \frac12(\eta\gamma^2 + \Ctilde) \Big(\Norm{\dev(\epsbfun(\vbfh))}_{\Ltwo(\K)}^2 + \sum_{i=1}^3 \frac{1}{h}\Norm{\PizFi\jump{(\vbfh)_{\tangscalar}}}^2_{\Ltwo(\Fi)}\Big).
\end{equation*}
Choosing~$\eta = 1$,
both the coefficients
$(1-1/(2\eta))$ and $\eta\gamma^2 + \Ctilde$ are positive;
hence,
summing over the elements~$\K$ in~$\Tauh$,
yields~\eqref{inf-sup:ii}.

The inf-sup estimate~\eqref{discrete-inf-sup:btwo}
follows from~\eqref{inf-sup:i-ii} and
the norm equivalence~\eqref{norm-equivalence-h-dev}.
This concludes the proof.
\end{proof}

\begin{remark} \label{remark:inf-sup}
For any discrete velocity~$\vbfh$ in~$\Vhk$,
the estimates~\eqref{inf-sup:i-ii} of Lemma~\ref{lemma:discrete-inf-sup-bt}
remain valid.
Indeed,
the same arguments as in the proof of Lemma~\ref{lemma:discrete-inf-sup-bt}
can be applied
using the decomposition~\eqref{eps-vh:dev-dev-representation-dev} of~$\dev(\epsbfun(\vbfh))$
on each~$\K$ in~$\Tauh$.
This decomposition no longer coincides with the one of~$\epsbfun(\vbfh)$;
however,
since the norm appearing in~\eqref{inf-sup:i-ii}
involves only the deviatoric part of the symmetric gradient,
the same estimates follow.
\eremk
\end{remark}

We are now in a position to prove the full inf-sup stability condition.
To this end,
we construct an appropriate pressure test function
and combine it with the stress test function obtained in the previous result.
\begin{theorem} \label{theorem:inf-sup-bone-btwo}
For any~$\vbfh$ in~$\Vhk$,
there holds the following discrete inf-sup stability
\begin{equation*}
\sup_{(\taubfh,\qh)\in\Sigmahkmo\times\Qhkmo} \frac{\bone(\vbfh,\qh) + \btwo(\taubfh,\vbfh)}{\Norm{\taubfh}_{\Sigmah} + \Norm{\qh}_{\Qh}}
    \gtrsim \Norm{\vbfh}_{\Vh}.
\end{equation*}
\end{theorem}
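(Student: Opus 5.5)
We combine the stress test function of Lemma~\ref{lemma:discrete-inf-sup-bt} (in the form of Remark~\ref{remark:inf-sup}, valid for all $\vbfh\in\Vhk$) with the pressure test function $\qh=\dive\vbfh$. This choice is admissible: $\dive\Vhk=\Qhkmo$, and $\vbfh\cdot\nbf=0$ on $\Gamma$ gives zero mean. The plan is to bound the full norm $\Norm{\vbfh}_{\Vh}^2$ by $\Norm{\vbfh}_{1,\dev,h}^2+\Norm{\dive\vbfh}^2_{\LtwoOmega}$. The trace part of $\epsbfun(\vbfh)$ is $\frac1d\dive\vbfh\,\Idbf$, so $\Norm{\epsbfun(\vbfh)}_{\Ltwo(\K)}^2=\Norm{\dev\epsbfun(\vbfh)}^2_{\Ltwo(\K)}+\frac1d\Norm{\dive\vbfh}^2_{\Ltwo(\K)}$. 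For the jump terms we use
\[
\Norm{\jump{(\vbfh)_t}}_{\Ltwo(\F)}\le\Norm{\PizF\jump{(\vbfh)_t}}_{\Ltwo(\F)}+\Norm{(\Id-\PizF)\jump{(\vbfh)_t}}_{\Ltwo(\F)}.
\]
The oscillation part is controlled by the broken gradient, via the Korn/Brenner-type argument used for \eqref{norm-equivalence-h-dev}. That argument does not use the divergence-free condition except to replace $\epsbfun$ by $\dev\epsbfun$, so here it gives $h^{-1}\Norm{(\Id-\PizF)\jump{(\vbfh)_t}}^2\lesssim$ the full $\Norm{\epsbfun(\vbfh)}^2$ plus the $\PizF$-jumps. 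Together these yield
\[
\Norm{\vbfh}_{\Vh}^2\lesssim \Norm{\vbfh}_{1,\dev,h}^2+\Norm{\dive\vbfh}^2_{\LtwoOmega}.
\]

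Next, let $\taubfh^{\vbfh}$ be the stress from Lemma~\ref{lemma:discrete-inf-sup-bt}/Remark~\ref{remark:inf-sup}, so that
\[
\btwo(\taubfh^{\vbfh},\vbfh)\ge c_1\Norm{\vbfh}_{1,\dev,h}^2,\qquad \Norm{\taubfh^{\vbfh}}_{\Sigmah}\lesssim\Norm{\vbfh}_{1,\dev,h}.
\]
Set $\taubfh=\taubfh^{\vbfh}$ and $\qh=\dive\vbfh$. Then
\[
\bone(\vbfh,\qh)+\btwo(\taubfh,\vbfh)\ge c_1\Norm{\vbfh}_{1,\dev,h}^2+\Norm{\dive\vbfh}^2_{\LtwoOmega},
\]
and $\Norm{\taubfh}_{\Sigmah}+\Norm{\qh}_{\Qh}\lesssim \Norm{\vbfh}_{1,\dev,h}+\Norm{\dive\vbfh}_{\LtwoOmega}$. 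Dividing and applying the bound from the first step gives the claim. No scaling parameter is needed, because both contributions are nonnegative.

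The main obstacle is whether Remark~\ref{remark:inf-sup} really delivers \eqref{inf-sup:i} for non-divergence-free $\vbfh$. In the proof, $\Ione$ pairs $\taubfh$ with the full $\epsbfun(\vbfh)$, whereas the stress is built from $\dev\epsbfun(\vbfh)$. Since $\taubfh$ is trace-free, $\taubfh:\epsbfun(\vbfh)=\taubfh:\dev\epsbfun(\vbfh)$. Hence every computation in $\Ione$ only sees $\dev\epsbfun(\vbfh)$, and the representation \eqref{eps-vh:dev-dev-representation-dev} replaces \eqref{eps-vh:dev-dev-representation} verbatim. I would check this point explicitly, and likewise the $\rkmoi$ formula \eqref{rkmoi-value}, in which $\dev(\normi\otimes\normi):\epsbfun=\dev(\normi\otimes\normi):\dev\epsbfun$. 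The second delicate point is the jump-oscillation estimate in the first step for non-divergence-free fields. If it is not directly available, I would fall back on the discrete Korn inequality of \cite{Brenner:2004} for piecewise $H^1$ fields with $\PizF$-jump control, applied with the full symmetric gradient.
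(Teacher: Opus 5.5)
Your proposal is correct and is essentially the paper's proof. Both use the stress of Lemma~\ref{lemma:discrete-inf-sup-bt}, extended to all of $\Vhk$ via Remark~\ref{remark:inf-sup} (valid precisely because $\taubfh$ and $\dev(\normi\otimes\normi)$ are trace-free, as you observe), paired with $\qh=\dive\vbfh$ and the splitting of $\Norm{\epsbfun(\vbfh)}_{\Ltwo(\K)}^2$ into its deviatoric and divergence parts. You are more explicit than the paper about the oscillating part of the tangential jumps, and there no Korn inequality is needed: since $\partial_t\jump{(\vbfh)_t}=\jump{\mathbf{t}^T\epsbfun(\vbfh)\mathbf{t}}$, a one-dimensional Poincar\'e inequality on $\F$ and the discrete trace inequality~\eqref{trace-inequality} give $h^{-1}\Norm{(\Id-\PizF)\jump{(\vbfh)_t}}^2_{\Ltwo(\F)}\lesssim\sum_{\K\supset\F}\Norm{\epsbfun(\vbfh)}^2_{\Ltwo(\K)}$ directly.
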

\begin{proof}
The proof is as that of~\cite[Thm~6.1]{Gopalakrishnan-Lederer-Schoberl:2020};
we report some details for the sake of completeness.
Consider the stress~$\taubfh$ of Lemma~\ref{lemma:discrete-inf-sup-bt}.
Since $\dive\Vhk = \Qhkmo$,
there exists a pressure~$\qh$ in~$\Qhkmo$ such that
$\qh = \dive\vbfh$;
with this choice,
the bilinear form~$\bone(\cdot,\cdot)$ of~\eqref{bone} satisfies
$\bone(\vbfh,\qh) = \Norm{\dive\vbfh}_{\Qh}^2 = \Norm{\qh}_{\LtwoOmega}^2$.
Hence,
using the estimates~\eqref{inf-sup:i-ii}
as discussed in Remark~\ref{remark:inf-sup},
and that
$\Norm{\epsbfun(\vbfh)}^2_{\Ltwo(\K)} \simeq \Norm{\dev(\epsbfun(\vbfh))}^2_{\Ltwo(\K)} + \Norm{\dive(\vbfh)}^2_{\Ltwo(\K)}$
on each~$\K$ in~$\Tauh$,
we obtain
\begin{equation*}
\frac{\bone(\vbfh,\qh)+\btwo(\taubfh,\vbfh)}{\Norm{\taubfh}_{\Sigmah} + \Norm{\qh}_{\Qh}}
\gtrsim \frac{\Norm{\dive\vbfh}_{\Qh}^2+\Norm{\vbfh}_{1,\dev,h}^2}{\Norm{\taubfh}_{\Sigmah} + \Norm{\qh}_{\Qh}}
\gtrsim \Norm{\vbfh}_{\Vh}.
\end{equation*}
Taking the supremum over~$\Sigmahkmo\times\Qhkmo$ on both sides yields the assertion.
\end{proof}

We further have a consistency property of the MCS formulation,
as stated in the next result.
\begin{lemma} \label{lemma:consistency}
Let $(\sigmabf,\ubf,p)$ be the solution to~\eqref{strong-mixed}
and assume $\sigmabf \in H^1(\Omega,\Rbbdd)$, $\ubf \in H^1(\Omega,\Rbbd)$, and $p \in H^1(\Omega,\Rbb)$.
Then,
for all $\taubfh \in \Sigmahkmo$, $\vbfh \in \Vhk$, and $\qh \in \Qhkmo$,
the following identity holds true
\begin{equation*}
a(\sigmabf,\taubfh) + \bone(\ubf,\qh) + \bone(\vbfh,p) + \btwo(\sigmabf,\vbfh) + \btwo(\taubfh,\ubf)
    = (-\fbf,\vbfh)_{\Omega}.
\end{equation*}
\end{lemma}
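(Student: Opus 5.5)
The plan is to split the identity into the pieces that come from the three equations of~\eqref{strong-mixed}, each tested with the corresponding discrete test function, and to verify each piece separately by elementwise integration by parts. The smoothness assumptions guarantee that all traces below make sense. In particular, $\sigmabf\in H^1(\Omega,\Rbbdd)$ has single-valued traces, so $\jump{\sigmabf_{\normtang}}=0$ on interior edges and $\btwo(\sigmabf,\cdot)$ is well defined. Likewise $\ubf\in H^1_0(\Omega,\Rbbd)$ has $\jump{\ubf_t}=0$ on every $\F\in\Fcalh$, including boundary edges, because $\ubf=\zerobf$ on $\Gamma$.

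\textbf{Terms involving $\taubfh$.} Since $\ubf$ is continuous across all edges, the edge term in~\eqref{btwo} vanishes, and
\[
\btwo(\taubfh,\ubf)=-\sum_{\K\in\Tauh}\sum_{i=1}^3\int_{\Ki}\taubfh:\epsbfun(\ubf)\dx=-(\taubfh,\epsbfun(\ubf)).
\]
The first equation of~\eqref{strong-mixed} gives $\epsbfun(\ubf)=\numo\dev\sigmabf$. Because $\taubfh$ is trace-free, $\taubfh=\dev\taubfh$, so $\btwo(\taubfh,\ubf)=-a(\sigmabf,\taubfh)$.

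\textbf{Pressure terms.} We have $\bone(\ubf,\qh)=(\dive\ubf,\qh)=0$ by the incompressibility constraint.

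\textbf{Terms involving $\vbfh$.} The main work is to show
\[
\btwo(\sigmabf,\vbfh)+(\dive\vbfh,p)=(-\fbf,\vbfh).
\]
On each macroelement $\K$, the function $\vbfh$ is smooth. Since $\sigmabf$ is symmetric, $\sigmabf:\epsbfun(\vbfh)=\sigmabf:\nablabfun\vbfh$. Summing over the subelements $\Ki$ and using that $\sigmabf\in H^1$ has no jumps on the interior Clough--Tocher edges, integration by parts over $\K$ gives
\[
-\int_\K\sigmabf:\nablabfun\vbfh\dx=\int_\K\divebf\sigmabf\cdot\vbfh\dx-\int_{\partial\K}\sigmabf\nbf\cdot\vbfh\ds .
\]
On $\partial\K$ we split $\vbfh=(\vbfh)_n\nbf+(\vbfh)_t\tbf$, which gives $\sigmabf\nbf\cdot\vbfh=\sigmabf_{nn}(\vbfh)_n+\sigmabf_{\normtang}(\vbfh)_t$. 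Summing over $\K\in\Tauh$ and regrouping edge by edge:
\begin{itemize}
\item the normal–normal contributions cancel on interior edges, since $\sigmabf_{nn}$ is single-valued and $(\vbfh)_n$ is continuous because $\vbfh\in\Hzerodiv$; they vanish on $\Gamma$ because $(\vbfh)_n=0$ there;
\item the normal–tangential contributions combine into $-\sum_{\F}\int_\F\sigmabf_{\normtang}\jump{(\vbfh)_t}\ds$, which cancels exactly the edge term in $\btwo$.
\end{itemize}
Hence $\btwo(\sigmabf,\vbfh)=(\divebf\sigmabf,\vbfh)$.

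For the pressure, $p\in H^1$ and $\vbfh\in\Hzerodiv$ give $(\dive\vbfh,p)=-(\nablabf p,\vbfh)$. The second equation of~\eqref{strong-mixed}, $\divebf\sigmabf-\nablabf p=-\fbf$, then yields the claimed identity.

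Adding the three pieces, the $a$-term cancels with $\btwo(\taubfh,\ubf)$, the term $\bone(\ubf,\qh)$ is zero, and the remaining terms give $(-\fbf,\vbfh)$.

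\textbf{Main obstacle.} The delicate step is the sign and orientation bookkeeping when regrouping the boundary integrals of $\sigmabf\nbf\cdot\vbfh$ into edge jumps. On each edge $\F$ one must check that the local element normals and tangents, compared with the fixed pair $(\nbfF,\tbfF)$ with $\tbfF$ the right-rotated normal, produce $\sigmabf_{\normtang}\jump{(\vbfh)_t}$ with exactly the sign used in~\eqref{btwo}. This requires the product of the two orientation flips to be consistent. It should also be noted that the interior Clough--Tocher edges contribute nothing, because both $\sigmabf$ and $\vbfh$ are continuous across them.
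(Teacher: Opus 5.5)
Your proof is correct and takes essentially the same route as the paper. The paper simply defers to the elementwise integration-by-parts consistency argument of \cite[Thm~6.2]{Gopalakrishnan-Lederer-Schoberl:2020}, with $\epsbfun(\cdot)$ in place of $\nablabfun\cdot$; your two adaptations are exactly what that deferral needs, namely using the symmetry of $\sigmabf$ to replace $\sigmabf:\epsbfun(\vbfh)$ by $\sigmabf:\nablabfun\vbfh$, and cancelling the $\taubfh$-terms via $\epsbfun(\ubf)=\numo\dev\sigmabf$ together with $\taubfh=\dev\taubfh$.
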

\begin{proof}
The proof follows along the same lines as those of~\cite[Thm~6.2]{Gopalakrishnan-Lederer-Schoberl:2020},
using the symmetric gradient~$\epsbfun(\cdot)$ rather than~$\nablabfun\cdot$.
\end{proof}

Consider now the bilinear form
$B: (\Sigmahkmo\times\Vhk\times\Qhkmo) \times (\Sigmahkmo\times\Vhk\times\Qhkmo) \to \Rbb$
defined by
\begin{equation} \label{bilinear-form:B}
B(\sigmabfh,\ubfh,\ph;\taubfh,\vbfh,\qh)
:= a(\sigmabfh,\taubfh) + \bone(\ubfh,\qh) + \bone(\vbfh,\ph) + \btwo(\sigmabfh,\vbfh) + \btwo(\taubfh,\ubfh),
\end{equation}
and the norm~$\Normthreebars{\cdot}{\cdot}{\cdot}$ on the product space $(\Sigmahkmo\times\Vhk\times\Qhkmo)$
defined by
\begin{equation} \label{norm-three-bars}
\Normthreebars{\sigmabfh}{\ubfh}{\ph}
:= \sqrt{\nu}\Norm{\ubfh}_{\Vh}
    + \frac{1}{\nu}\big(\Norm{\sigmabfh}_{\Sigmah} + \Norm{\ph}_{\Qh}\big).
\end{equation}

\subsection{Error estimates} \label{subsection:error-estimates}
We are now in a position to derive
optimal a priori error estimates
for the discrete formulation~\eqref{variational-formulation:discrete}.

Let $m \geq 1$ and assume that the solution $(\sigmabf,\ubf,p)$ to~\eqref{strong-mixed} satisfies
$\ubf$ in $H^1(\Omega,\Rbbd)\cap H^{m}(\Tauh,\Rbbd)$,
$p$ in $L^2_0(\Omega)\cap H^{m-1}(\Tauh)$,
and
$\sigmabf$ in $\Sigmabf\cap H^1(\Omega,\Rbbdd)\cap H^{m-1}(\Ccalh,\Rbbdd)$.
We first introduce the interpolation operators for the velocity and pressure spaces.
Let~$\IcalVhk$ denote the standard $H(\dive)$-conforming interpolation operator;
see, e.g., \cite[Eq.~(2.5.26)]{Boffi-Brezzi-Fortin:2013},
and~$\IcalQhkmo$ be the $\Ltwo$-projection into~$\Qhkmo$;
see, e.g., \cite[Eq.~(18.30)]{Ern-Guermond:1}.
Set $s := \min\{m-1,\,k\}$,
combining the estimate
$|\epsbfun(\vbf)|_{\K}| \leq |\nablabfun(\vbf)|_{\K}|$ for all~$\vbf$ in $H^1(\Tauh,\Rbbd)$ and~$\K$ in~$\Tauh$,
the multiplicative trace inequality,
and standard approximation results for the interpolation operator above; see, e.g.,
\cite[Prop.~2.5.4]{Boffi-Brezzi-Fortin:2013},
\cite[Thm.~18.16]{Ern-Guermond:1},
we obtain the following estimates
\begin{equation} \label{approximation:V-Q}
\Norm{\ubf-\IcalVhk\ubf}_{\Vh} \lesssim h^s\Norm{\ubf}_{H^{s+1}(\Tauh)}
\qquad\text{and}\qquad
\Norm{p - \IcalQhkmo p}_{\Qh} \lesssim h^s\Norm{p}_{H^s(\Tauh)}.
\end{equation}

We next introduce a suitable interpolation operator
for the stress space and establish its approximation properties.
Given an element~$\Ki$ in~$\Ccalh$,
let $\IcalKi: \Sigmabf(\Ki)\to\Sigmahkmo(\Ki)$
be the local interpolation operator
defined by imposing the degrees of freedom \eqref{dofs}
of~$\sigmabf$ in~$\Sigmabf(\Ki)$ for~$\IcalKi \sigmabf$, i.e.,
\begin{subequations} \label{local-interpolator-stress}
    \begin{align} 
        \int_{\Fi} (\IcalKi\sigmabf)_{\normtang} \piF \ds
        &\!= \int_{\Fi} (\sigmabf)_{\normtang} \piF \ds 
            &&\forall\piF \in \Pbbkmo(\Fi), \label{interpolation:edge} \\ 
        \int_{\Ki} (\IcalKi\sigmabf) : \dev(\normi\otimes\normi) \qi \dx
        &\!=\int_{\Ki} \sigmabf : \dev(\normi\otimes\normi) \qi \dx  
            &&\forall\qi \in \Pbbkmo(\Ki), \label{interpolation:bulk-kmo} \\
        \int_{\Ki} (\IcalKi\sigmabf) : \sym(\normi\otimes\tangi) \wi \lambdai \dx
        &=\int_{\Ki} \sigmabf : \sym(\normi\otimes\tangi) \wi \lambdai \dx
            &&\forall\wi \in \Pbbkmt(\Ki) \label{interpolation:bulk-kmt-lambda}.
\end{align}
\end{subequations}
Let
$\IcalSigmahkmo : \Sigmabf \to \Sigmahkmo$
be the corresponding global interpolation operator for the stress space, i.e.,
given~$\Ki$ in~$\Ccalh$ and~$\sigmabf$ in~$\Sigmabf$,
\begin{equation} \label{global-interpolation:def}
    (\IcalSigmahkmo\sigmabf)|_{\Ki} = \IcalKi(\sigmabf|_{\Ki}) \in \Sigmahkmo(\Ki).
\end{equation}
We begin by establishing stability and approximation properties
for the interpolation operator $\IcalSigmahkmo$.
\begin{lemma}
Let $\sigmabf$ be in $\Sigmabf\,\cap\,H^1(\Omega,\Rbbdd) \cap H^{m-1}(\Ccalh,\Rbbdd)$.
Then,
the local interpolation operator $\IcalKi$ defined by~\eqref{local-interpolator-stress}
satisfies the stability estimate
\begin{equation*}
\Norm{\IcalKi\sigmabf}_{\Ltwo(\Ki)}
\lesssim \Norm{\sigmabf}_{\Ltwo(\Ki)} + \hK\SemiNorm{\sigmabf}_{H^1(\Ki)}.
\end{equation*}
Moreover,
for $s:= \min(m-1,k)$,
for every element~$\Ki$ in~$\Ccalh$
and corresponding external edge~$\Fi$,
the following approximation estimates hold true:
\begin{equation} \label{local-approximation:sigma}
\Norm{\sigmabf-\IcalKi\sigmabf}_{\Ltwo(\Ki)}
\lesssim
h^s\Norm{\sigmabf}_{H^s(\Ki)}
\qquad\text{and}\qquad
\Norm{(\sigmabf - \IcalKi\sigmabf)_{\normtang}}_{\Ltwo(\Fi)}
\lesssim h^{s-\frac12}\Norm{\sigmabf}_{H^s(\Ki)}.
\end{equation}

Furthermore,
given the global interpolation operator~$\IcalSigmahkmo$ of~\eqref{global-interpolation:def},
it holds that
\begin{equation} \label{global-approximation:sigma}
\Norm{\sigmabf-\IcalSigmahkmo\sigmabf}_{\Sigmah}
\lesssim
h^s\Norm{\sigmabf}_{H^s(\Ccalh)}.
\end{equation}
\end{lemma}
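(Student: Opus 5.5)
We prove stability by reading off the coefficients of $\IcalKi\sigmabf$ directly from the degrees of freedom, following the uniqueness argument of Lemma~\ref{lemma:unisolvence} but now with nonzero data. By Lemma~\ref{lemma:dev-sym-basis-Ki} we write $\IcalKi\sigmabf = \rkmoi\dev(\normi\otimes\normi) + \skmoi\sym(\normi\otimes\tangi)$.

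First we determine the two coefficients. The orthogonality~\eqref{orthogonality:dev-sym}, identity~\eqref{dev:frob-norm} and~\eqref{interpolation:bulk-kmo} give that $\frac12\rkmoi$ is the $\Ltwo(\Ki)$-projection onto $\Pbbkmo(\Ki)$ of $\sigmabf:\dev(\normi\otimes\normi)$. Hence $\Norm{\rkmoi}_{\Ltwo(\Ki)}\lesssim\Norm{\sigmabf}_{\Ltwo(\Ki)}$. Next, as in~\eqref{nt-tauh-on-FKi}, $(\IcalKi\sigmabf)_{\normtang}|_{\Fi}=\frac12\skmoi|_{\Fi}$. By~\eqref{interpolation:edge}, $\frac12\skmoi|_\Fi$ is the $\Ltwo(\Fi)$-projection $\PikmoFi$ of $\sigmabf_{\normtang}$. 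Using Lemma~\ref{lemma:decomposition-polynomial-spaces}, we write $\skmoi = 2\Lcalkmo(\PikmoFi\sigmabf_{\normtang}) + \lambdai\stildekmti$. Testing~\eqref{interpolation:bulk-kmt-lambda} with $\wi=\stildekmti$, as in the bound leading to~\eqref{skmoi-leq-PizF}, yields
\begin{equation*}
\Norm{\lambdai\stildekmti}_{\Ltwo(\Ki)}\lesssim \Norm{\sigmabf}_{\Ltwo(\Ki)} + \Norm{\Lcalkmo(\PikmoFi\sigmabf_{\normtang})}_{\Ltwo(\Ki)}.
\end{equation*}
This step needs care: the norm equivalence $\Norm{\lambdai w}_{\Ltwo(\Ki)}^2 \simeq \int_\Ki \lambdai^2 w^2$ versus $\int_\Ki \lambdai w\,\lambdai w$ is trivially an identity, so Cauchy--Schwarz suffices, exactly as in the paper's earlier argument.

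The remaining edge term is controlled through~\eqref{stability-Lcal}, the $\Ltwo(\Fi)$-stability of $\PikmoFi$, and the continuous multiplicative trace inequality $\Norm{\sigmabf}_{\Ltwo(\Fi)}\lesssim h_\Ki^{-1/2}\Norm{\sigmabf}_{\Ltwo(\Ki)} + h_\Ki^{1/2}\SemiNorm{\sigmabf}_{H^1(\Ki)}$. Together these give
\begin{equation*}
\Norm{\Lcalkmo(\PikmoFi\sigmabf_{\normtang})}_{\Ltwo(\Ki)}\lesssim h_\Ki^{1/2}\Norm{\sigmabf}_{\Ltwo(\Fi)}\lesssim \Norm{\sigmabf}_{\Ltwo(\Ki)}+h_\Ki\SemiNorm{\sigmabf}_{H^1(\Ki)}.
\end{equation*}
Collecting the bounds and using $h_\Ki\simeq\hK$ proves the stability estimate.

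For the approximation estimates~\eqref{local-approximation:sigma}, the key observation is that $\IcalKi$ reproduces $\Sigmahkmo(\Ki)$, by unisolvence (Lemma~\ref{lemma:unisolvence}). Let $\boldsymbol{\pi}$ be the $\Ltwo(\Ki)$-projection of $\sigmabf$ onto $\Pbbkmo(\Ki,\Rbbdd)$, applied componentwise. It preserves symmetry and trace-freeness, since those constraints are linear with constant coefficients, so $\boldsymbol{\pi}\in\Sigmahkmo(\Ki)$. We then write $\sigmabf-\IcalKi\sigmabf=(\Id-\IcalKi)(\sigmabf-\boldsymbol{\pi})$ and apply the stability bound. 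The Bramble--Hilbert/Deny--Lions estimates $\Norm{\sigmabf-\boldsymbol{\pi}}_{\Ltwo(\Ki)}+h\SemiNorm{\sigmabf-\boldsymbol{\pi}}_{H^1(\Ki)}\lesssim h^s\SemiNorm{\sigmabf}_{H^s(\Ki)}$ hold for $1\le s\le k$. The edge estimate follows from the multiplicative trace inequality applied to $\sigmabf-\boldsymbol{\pi}$ plus the discrete trace inequality~\eqref{trace-inequality} applied to $\IcalKi(\sigmabf-\boldsymbol{\pi})$.

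Finally,~\eqref{global-approximation:sigma} follows by squaring and summing over $\Ki\in\Ccalh$. The main obstacle is the case $s=0$ (i.e.\ $m=1$) together with the restriction $s\le k$. For $s=0$ the $H^1$ seminorm still appears, but it is available by the assumption $\sigmabf\in H^1$. So we read the case $s=0$ as plain stability, and treat the general case for $s\ge1$, where the Bramble--Hilbert estimate also controls the $H^1$ term.
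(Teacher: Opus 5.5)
Your proposal is correct and follows the paper's proof essentially step by step: the representation of $\IcalKi\sigmabf$ in the $\dev$/$\sym$ basis, the $\Lcalkmo$-plus-$\lambdai\stildekmti$ splitting of the $\sym$ coefficient tested with $\stildekmti$, the multiplicative trace inequality, and polynomial preservation combined with the $\Ltwo$-projection. The only deviation is that the paper reads the edge estimate off directly from $(\IcalKi\sigmabf)_{\normtang}|_{\Fi}=\PikmoFi(\sigmabf_{\normtang})$ (an identity you already derived) instead of applying trace inequalities to $\sigmabf-\boldsymbol{\pi}$ and $\IcalKi(\sigmabf-\boldsymbol{\pi})$; your caveat that $s=0$ only yields the stability bound applies equally to the paper's own argument.
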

\begin{proof}
We detail the proof for $k-1 \geq 1$.
The lowest-order case $k-1 = 0$ can be treated similarly
and is simpler.
We split the proof in three steps.
\paragraph*{Stability estimates.}
We first prove the local stability estimate.
Let~$\Ki$ be an element of~$\Ccalh$
and~$\sigmabf$ be a stress function in~$\Sigmabf$.
Lemma~\ref{lemma:dev-sym-basis-Ki} entails that
there exist polynomials~$\rIi$ and~$\sIi$ in~$\Pbbkmo(\Ki)$
such that
$\IcalKi(\sigmabf|_{\Ki})$ admits the following representation
\begin{equation} \label{IcalKi-representation}
\IcalKi(\sigmabf|_{\Ki}) = \rIi\dev(\normi\otimes\normi) + \sIi\sym(\normi\otimes\tangi).
\end{equation}
Hence,
the triangle inequality
and identities~\eqref{dev:frob-norm}-\eqref{sym:frob-norm}
imply that
\begin{equation} \label{triangle-IcalKi}
\begin{split}
\Norm{\IcalKi(\sigmabf|_{\Ki})}_{\Ltwo(\Ki)}
&\leq \Norm{\rIi\dev(\normi\otimes\normi)}_{\Ltwo(\Ki)}
    + \Norm{\sIi\sym(\normi\otimes\tangi)}_{\Ltwo(\Ki)} \\
&\leq\frac{1}{\sqrt{2}}
    (\Norm{\rIi}_{\Ltwo(\Ki)}
    +\Norm{\sIi}_{\Ltwo(\Ki)}).
\end{split}
\end{equation}
We estimate the two terms on the right-hand side separately.

As for the first term,
inserting the representation~\eqref{IcalKi-representation}
in~\eqref{interpolation:bulk-kmo}
and applying the orthogonality property~\eqref{orthogonality:dev-sym},
we obtain that,
for all~$\qi$ in $\Pbbkmo(\Ki)$,
the following identity holds true
\begin{equation*} \label{rIi-integral-value}
\int_{\Ki} \rIi \dev(\normi\otimes\normi) : \dev(\normi\otimes\normi) \qi \dx
= \int_{\Ki} \sigmabf : \dev(\normi\otimes\normi) \qi \dx.
\end{equation*}
The above display
and the Cauchy--Schwarz inequality
yield that
\begin{equation*} 
\begin{split}
&\frac12\Norm{\rIi}_{\Ltwo(\Ki)}^2
= \int_{\Ki}\rIi\dev(\normi\otimes\normi):\dev(\normi\otimes\normi)\rIi\dx \\
&\qquad= \int_{\Ki} \sigmabf:\dev(\normi\otimes\normi)\rIi \dx 
\lesssim \Norm{\sigmabf}_{\Ltwo(\Ki)} \Norm{\rIi}_{\Ltwo(\Ki)}.
\end{split}
\end{equation*}
Therefore,
\begin{equation} \label{rIi-estimate}
\Norm{\rIi}_{\Ltwo(\Ki)} \lesssim \Norm{\sigmabf}_{\Ltwo(\Ki)}.
\end{equation}

As for the second term,
since $(\IcalKi(\sigmabf|_{\Ki}) )_{\normtang}|_{\Fi}$
belongs to~$\Pbbkmo(\Fi)$,
the identity~\eqref{interpolation:edge},
the orthogonality property~\eqref{orthogonality:dev-sym},
the identity~\eqref{sym:frob-norm},
the symmetry of~$\sigmabf$,
and the fact that $\sym(\normi\otimes\tangi)$ is constant on~$\Fi$,
entail
\begin{equation} \label{sI-value-on-edge}
\frac{1}{2}\sIi|_{\Fi}
= (\IcalKi(\sigmabf|_{\Ki}))_{\normtang}|_{\Fi}
= \PikmoFi(\sigmabf_{\normtang})
=  \PikmoFi(\sigmabf):\sym(\normi\otimes\tangi).
\end{equation}
Since~$\sIi$ belongs to~$\Pbbkmo(\Ki)$,
the above display yields the existence of~$\stildeIi$ in~$\Pbbkmt(\Ki)$
such that
\begin{equation} \label{sI-stildeI}
\sIi = 2\Lcalkmo(\PikmoFi(\sigmabf_{\normtang})) + \stildeIi\lambdai,
\end{equation}
where
$\Lcalkmo(\PikmoFi(\sigmabf_{\normtang}))$ denotes
the constant extension of $\PikmoFi(\sigmabf_{\normtang})$ from~$\Fi$ over~$\Ki$
as defined in~\eqref{Lcal}.
Then,
the triangle inequality yields
\begin{equation} \label{triangle-sIi}
\begin{split}
\Norm{\sIi}_{\Ltwo(\Ki)}
\lesssim \Norm{\Lcalkmo(\PikmoFi\sigmabf_{\normtang})}_{\Ltwo(\Ki)}
    +\Norm{\stildeIi\lambdai}_{\Ltwo(\Ki)}
\end{split}
\end{equation}
Using the stability of the extension operator~$\Lcalkmo$ of~\eqref{stability-Lcal},
the stability of the $\Ltwo$-projection on~$\Fi$, cf., e.g.~\cite[Thm.~18.16]{Ern-Guermond:1}, 
the Cauchy--Schwarz inequality,
identity~\eqref{sym:frob-norm},
the multiplicative trace inequality; see, e.g., \cite[Lem.~12.15]{Ern-Guermond:1},
and the shape-regularity assumption,
give
\begin{equation} \label{Lcal-estimate}
\begin{split}
\Norm{\Lcalkmo(\PikmoFi\sigmabf)}_{\Ltwo(\Ki)}
&\lesssim h^{\frac12}\Norm{\PikmoFi\sigmabf_{\normtang}}_{\Ltwo(\Fi)} 
\lesssim h^{\frac12}\Norm{\sigmabf_{\normtang}}_{\Ltwo(\Fi)}
\lesssim h^{\frac12}\Norm{\sigmabf}_{\Ltwo(\Fi)} \\
&\lesssim \Norm{\sigmabf}_{\Ltwo(\Ki)} + \hK\SemiNorm{\sigmabf}_{H^1(\Ki)}.
\end{split}
\end{equation}
Inserting the representations~\eqref{IcalKi-representation} of~$\IcalKi\sigmabf$
and~\eqref{sI-stildeI} of~$\sIi$ on~$\Ki$
in identity~\eqref{interpolation:bulk-kmt-lambda},
we obtain that,
for all~$\wi$ in~$\Pbbkmt(\Ki)$
it holds
\begin{equation*}
\begin{split}
\int_{\Ki} \sigmabf &: \sym(\normi\otimes\tangi)\wi\lambdai\dx
=\int_{\Ki} \IcalKi(\sigmabf) : \sym(\normi\otimes\tangi)\wi\lambdai \dx \\
&= \int_{\Ki} (\rIi\dev(\normi\otimes\normi) + \sIi\sym(\normi\otimes\tangi)) : \sym(\normi\otimes\tangi)\wi\lambdai \dx \\
&= \int_{\Ki} \big(2\Lcalkmo(\PikmoFi(\sigmabf_{\normtang})) + \stildeIi\lambdai\big) \sym(\normi\otimes\tangi) : \sym(\normi\otimes\tangi)\wi\lambdai \dx \\
&= \frac12\int_{\Ki} \big(2\Lcalkmo(\PikmoFi(\sigmabf_{\normtang})) + \stildeIi\big)\lambdai\wi\lambdai \dx.
\end{split}
\end{equation*}
Choosing~$\wi = \stildeIi$ in~\eqref{interpolation:bulk-kmt-lambda} and
using the representation~\eqref{sI-stildeI},
the identity~\eqref{sym:frob-norm},
and the estimate~\eqref{Lcal-estimate},
give
\begin{equation*} 
\begin{split}
\Norm{\stildeIi\lambdai}^2_{\Ltwo(\Ki)}
&= 2\int_{\Ki}\sigmabf:\sym(\normi\otimes\tangi)\,\stildeIi\lambdai\dx
    - \int_{\Ki}\Lcalkmo(\PikmoFi(\sigmabf_{\normtang}))\,\stildeIi\lambdai\dx \\
&\lesssim \Big(\Norm{\sigmabf}_{\Ltwo(\Ki)}
    + \Norm{\Lcalkmo(\PikmoFi(\sigmabf_{\normtang}))}_{\Ltwo(\Ki)}\Big)
    \Norm{\stildeIi\lambdai}_{\Ltwo(\Ki)} \\
&\lesssim\Big(\Norm{\sigmabf}_{\Ltwo(\Ki)} + h\SemiNorm{\sigmabf}_{H^1(\Ki)}\Big)
     \Norm{\stildeIi\lambdai}_{\Ltwo(\Ki)},
\end{split}
\end{equation*}
hence,
the following estimate holds true
\begin{equation} \label{stildeIi-estimate}
\Norm{\stildeIi\lambdai}_{\Ltwo(\Ki)} \lesssim \Norm{\sigmabf}_{\Ltwo(\Ki)} + \hKi\SemiNorm{\sigmabf}_{H^1(\Ki)}.
\end{equation}
Inserting estimates~\eqref{Lcal-estimate} and~\eqref{stildeIi-estimate} in~\eqref{triangle-sIi}
gives
\begin{equation} \label{sIi-estimate}
\Norm{\sIi}_{\Ltwo(\Ki)} \lesssim \Norm{\sigmabf}_{\Ltwo(\Ki)} + \hKi\SemiNorm{\sigmabf}_{H^1(\Ki)}.
\end{equation}

Combining the estimates~\eqref{rIi-estimate} for~$\rIi$
and~\eqref{sIi-estimate} for~$\sIi$
in~\eqref{triangle-IcalKi}
with the quasi-uniform assumption,
we obtain the stability estimate
\begin{equation} \label{IcalKi-stability}
\Norm{\IcalKi(\sigmabf)}_{\Ltwo(\Ki)}
\lesssim \Norm{\sigmabf}_{\Ltwo(\Ki)} + \hK\SemiNorm{\sigmabf}_{H^1(\Ki)}.
\end{equation}

\paragraph*{Approximation estimates.}
We first derive the edge approximation estimate.
By~\eqref{sI-value-on-edge},
we obtain
\begin{equation*}
\Norm{(\sigmabf - \IcalKi\sigmabf)_{\normtang}}_{\Ltwo(\Fi)}
=\Norm{(\Id - \PikmoFi)(\sigmabf_{\normtang})}_{\Ltwo(\Fi)}.
\end{equation*}
Standard approximation properties of the $\Ltwo$-projection on edges; see, e.g., \cite[Rem.~18.17]{Ern-Guermond:1}
and regularity assumptions on~$\Tauh$,
yields
\begin{equation*}
\Norm{q - \PikmoFi q}_{\Ltwo(\Fi)} \lesssim h^{s-\frac12}\SemiNorm{q}_{H^s(\Ki)}
\qquad\qquad \forall q \in H^s(\Ki).
\end{equation*}
Combining the two displays above
with the regularity assumptions on~$\Tauh$ gives
\begin{equation*}
\Norm{(\sigmabf - \IcalKi\sigmabf)_{\normtang}}_{\Ltwo(\Fi)}
\lesssim h^{s-\frac12}\SemiNorm{\sigmabf}_{H^s(\Ki)},
\end{equation*}
which proves the local approximation estimate on~$\Fi$.

We next derive the volume approximation estimate.
By estimate~\eqref{IcalKi-stability},
the interpolation operator~$\IcalKi$ is continuous and
by the unisolvency of the degrees of freedom, cf. Lemma~\ref{lemma:unisolvence},
it preserves polynomials in~$\Sigmahkmo(\Ki)$, i.e.,
\begin{equation} \label{preservation-polynomials}
\sigmabfh - \IcalKi\sigmabfh = 0
    \qquad \forall \sigmabfh \in \Sigmahkmo(\Ki).
\end{equation}
Since the $\Ltwo$-projection preserves both the symmetric and trace-free properties,
given~$\sigmabf$ in~$\Sigmabf(\Ki)$
its $\Ltwo$-projection $\qbfunderkmo:=\Pikmo(\sigmabf)$ belongs to~$\Sigmahkmo(\Ki)$.
Then,
the triangle inequality,
the preservation property~\eqref{preservation-polynomials} and
the stability property~\eqref{IcalKi-stability} of the interpolation operator $\IcalKi$,
and the approximation properties of the $\Ltwo$-projection $\Pikmo$, cf., e.g., \cite[Thm.~18.16]{Ern-Guermond:1},
yield
\begin{equation*}
\begin{split}
\Norm{\sigmabf - \IcalKi\sigmabf}_{\Ltwo(\Ki)}
&\leq \Norm{\sigmabf - \qbfunderkmo}_{\Ltwo(\Ki)}
    + \Norm{\qbfunderkmo - \IcalKi\sigmabf}_{\Ltwo(\Ki)} \\
&= \Norm{\sigmabf - \qbfunderkmo}_{\Ltwo(\Ki)}
    + \Norm{\IcalKi(\qbfunderkmo - \sigmabf)}_{\Ltwo(\Ki)} \\
&\lesssim \Norm{\sigmabf - \qbfunderkmo}_{\Ltwo(\Ki)} + \hKi\SemiNorm{\sigmabf - \qbfunderkmo}_{H^1(\Ki)}
\lesssim h^s \SemiNorm{\sigmabf}_{H^s(\Ki)},
\end{split}
\end{equation*}
which proves the local approximation estimate on~$\Ki$.

The global estimate~\eqref{global-approximation:sigma}
follows by summing the local estimate
over all elements~$\Ki$ in~$\Ccalh$.
\end{proof}

We preliminarily show an inf-sup stability property
of the bilinear form~$B$ of~\eqref{bilinear-form:B}
with respect to the norm $\Normthreebars{\cdot}{\cdot}{\cdot}$ of~\eqref{norm-three-bars}.
\begin{lemma} \label{lemma:inf-sup-product-space}
The following inf-sup stability estimate holds true
\begin{equation*}
\begin{split}
\sup_{(\taubfh,\vbfh,\qh)\in\Sigmahkmo\times\Vhk\times\Qhkmo}
    &\frac{B(\IcalSigmahkmo\sigmabf - \sigmabfh,\IcalVhk\ubf - \ubfh,\IcalQhkmo p - \ph; \taubfh,\vbfh,\qh)}{\Normthreebars{\taubfh}{\vbfh}{\qh}} \\
&\qquad\qquad\qquad\geq \Normthreebars{\IcalSigmahkmo\sigmabf - \sigmabfh}{\IcalVhk\ubf - \ubfh}{\IcalQhkmo p - \ph}.
\end{split}
\end{equation*}
\end{lemma}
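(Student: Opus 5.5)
The plan is to prove the stronger, argument-independent statement: for every triple $(\sigmabfh,\ubfh,\ph)$ in $\Sigmahkmo\times\Vhk\times\Qhkmo$,
\begin{equation*}
\sup_{(\taubfh,\vbfh,\qh)}\frac{B(\sigmabfh,\ubfh,\ph;\taubfh,\vbfh,\qh)}{\Normthreebars{\taubfh}{\vbfh}{\qh}}\gtrsim \Normthreebars{\sigmabfh}{\ubfh}{\ph}.
\end{equation*}
The lemma then follows by taking the discrete error triple $(\IcalSigmahkmo\sigmabf - \sigmabfh,\IcalVhk\ubf - \ubfh,\IcalQhkmo p - \ph)$ as argument. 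The inequality holds with a generic constant; equality of constants to one is understood up to the $\lesssim$ convention. The right tool is Babu\v{s}ka--Brezzi theory for the saddle point problem. Here $a(\cdot,\cdot)$ acts on the pair $(\taubfh,\qh)\in\Sigmahkmo\times\Qhkmo$, and the constraint form $\bone(\vbfh,\qh)+\btwo(\taubfh,\vbfh)$ couples this pair to $\vbfh\in\Vhk$.

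The ingredients are all available. Continuity of $a$, $\bone$, $\btwo$ is Lemma~\ref{lemma:continuity}. Coercivity of $a$ on the kernel $\Zh$, with constant $\nu^{-1}$ and controlling both $\Norm{\taubfh}_{\Sigmah}$ and $\Norm{\qh}_{\Qh}$, is Lemma~\ref{lemma_coercivity-kernel}. The inf-sup condition of the constraint form, uniform in $h$ and $\nu$, is Theorem~\ref{theorem:inf-sup-bone-btwo}. Brezzi's theorem then yields the global inf-sup for $B$ in the product norm, with constants depending only on these quantities.

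To obtain the $\nu$-robust constant in the weighted norm \eqref{norm-three-bars}, I would construct the test function explicitly rather than quote the abstract theorem.
\begin{enumerate}
\item Take $(\taubfh,\vbfh,\qh)=(\sigmabfh,-\ubfh,-\ph)$. This gives $B=a(\sigmabfh,\sigmabfh)=\nu^{-1}\Norm{\dev\sigmabfh}^2$, since $\sigmabfh$ is trace-free and hence $\dev\sigmabfh=\sigmabfh$.
\item By Theorem~\ref{theorem:inf-sup-bone-btwo}, pick $(\taubfh^u,\qh^u)$ with $\bone(\ubfh,\qh^u)+\btwo(\taubfh^u,\ubfh)\gtrsim\Norm{\ubfh}_{\Vh}^2$ and norm $\lesssim\Norm{\ubfh}_{\Vh}$. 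Add a scaled multiple of this pair; the cross term $a(\sigmabfh,\taubfh^u)$ is absorbed by Young's inequality.
\item Control $\Norm{\sigmabfh}+\Norm{\ph}$ fully, beyond the deviatoric part. Decompose $(\sigmabfh,\ph)$ into its component in $\Zh$ and a complement, using the inf-sup of the transposed operator. Equivalently, test with a $\vbfh$ realizing $\sup_{\vbfh}(\bone(\vbfh,\ph)+\btwo(\sigmabfh,\vbfh))/\Norm{\vbfh}_{\Vh}$ and argue as in the proof of Lemma~\ref{lemma_coercivity-kernel}, which follows \cite[Lemma~6.4]{Gopalakrishnan-Lederer-Schoberl:2020}.
\item Sum the three test functions with weights chosen as powers of $\nu$ matched to the norm $\Normthreebars{\cdot}{\cdot}{\cdot}$. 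Bound the denominator by the triangle inequality and the stability bounds of each piece.
\end{enumerate}

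The main obstacle is step 3 together with the $\nu$-bookkeeping. One must show that the pressure and the full stress norm are controlled off the kernel: $\Norm{\qh}$ is controlled through $\dive\Vhk=\Qhkmo$, and $\Norm{\sigmabfh}$ through continuity of $\btwo$. The weights must also be arranged so that the scalings $\sqrt{\nu}$ and $\nu^{-1}$ in \eqref{norm-three-bars} come out consistently. I would first try to copy the corresponding argument of \cite[Thm.~6.3]{Gopalakrishnan-Lederer-Schoberl:2020} verbatim. Only Lemma~\ref{lemma:discrete-inf-sup-bt} and Theorem~\ref{theorem:inf-sup-bone-btwo} change there, and both are already established.
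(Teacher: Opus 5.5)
Your main argument is the paper's proof. The paper applies Brezzi's theorem using continuity (Lemma~\ref{lemma:continuity}), coercivity of $a$ on $\Zh$ (Lemma~\ref{lemma_coercivity-kernel}) and the inf-sup condition of Theorem~\ref{theorem:inf-sup-bone-btwo}, evaluated at the discrete error triple. You are also right that the ``$\geq$'' can only mean ``$\gtrsim$''.

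The explicit $\nu$-robust construction you add has two concrete problems.

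\textbf{The sign in step 1.} The pressure sits in the same block as the stress: the constraint form $\bone(\vbfh,\qh)+\btwo(\taubfh,\vbfh)$ acts on the pair $(\taubfh,\qh)$. Your choice $(\taubfh,\vbfh,\qh)=(\sigmabfh,-\ubfh,-\ph)$ therefore gives $a(\sigmabfh,\sigmabfh)-2\bone(\ubfh,\ph)$, not $a(\sigmabfh,\sigmabfh)$. The cancelling choice is $(\sigmabfh,-\ubfh,\ph)$. Because $\dev\sigmabfh=\sigmabfh$, this term already controls the full stress norm. Step 3 then only needs to recover $\Norm{\ph}_{\Qh}$, by adding $\rho\,(0,\vbfh^p,0)$ with $\dive\vbfh^p=\ph$ and $\Norm{\vbfh^p}_{\Vh}\lesssim\Norm{\ph}_{\Qh}$.

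\textbf{The $\nu$-bookkeeping in step 4.} This cannot close with the weights as literally written in \eqref{norm-three-bars}. The Young absorptions force $\theta\simeq\nu$ in step 2 and $\rho\simeq\nu^{-1}$ in step 3. The resulting bound is $\nu$-uniform only for the scaling $\sqrt{\nu}\Norm{\ubfh}_{\Vh}+\nu^{-1/2}(\Norm{\sigmabfh}_{\Sigmah}+\Norm{\ph}_{\Qh})$.

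With the factor $\nu^{-1}$, no $\nu$-independent constant exists. Take the triple $(0,\ubfh,0)$. Continuity gives $B(0,\ubfh,0;\taubfh,\vbfh,\qh)=\bone(\ubfh,\qh)+\btwo(\taubfh,\ubfh)\lesssim\nu\,\Normthreebars{\taubfh}{\vbfh}{\qh}\Norm{\ubfh}_{\Vh}$. On the other hand, $\Normthreebars{0}{\ubfh}{0}=\sqrt{\nu}\Norm{\ubfh}_{\Vh}$. So the inf-sup constant is $O(\sqrt{\nu})$ as $\nu\to0$.

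The paper's one-line appeal to Brezzi does not resolve this either; in the literal norm it only yields a $\nu$-dependent constant. Its subsequent error analysis in \eqref{error-estimate:triangle} and \eqref{a-bone:estimate} works with the $\nu^{-1/2}$-weighted quantities. So either run your construction in that scaling, or accept a $\nu$-dependent constant.
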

\begin{proof}
For the bilinear forms appearing in~\eqref{bilinear-form:B} we have the following properties:
the continuity of the bilinear forms~$a$, $\bone$, and~$\btwo$, cf. Lemma~\ref{lemma:continuity},
the coercivity on the kernel of the bilinear form~$a$, cf. Lemma~\ref{lemma_coercivity-kernel},
and the inf-sup condition for the bilinear forms~$\bone$ and~$\btwo$, cf. Theorem~\ref{theorem:inf-sup-bone-btwo}.
Hence,
the assertion follows by applying 
Brezzi's theorem; see, e.g., \cite{Boffi-Brezzi-Fortin:2013}.
\end{proof}

We now establish optimal convergence error estimates.
\begin{theorem} \label{theorem:error-estimates}
Let~$(\sigmabf,\ubf,\p)$ and $(\sigmabfh,\ubfh,\ph)$
be the solutions to~\eqref{strong-mixed} and~\eqref{variational-formulation:discrete}.
Assume that $\sigmabf$ belongs to $H^1(\Omega,\Rbbdd)\cap H^{m-1}(\Ccalh,\Rbbdd)$,
$\ubf$ to $H^1(\Omega,\Rbbd)\cap H^{m}(\Tauh,\Rbbd)$,
and $\p$ to $L^2_0(\Omega,\Rbb)\cap H^{m-1}(\Tauh,\Rbb)$.
Then,
for~$s := \min(m-1,k)$
the following estimate holds true
\begin{equation*}
 \frac{1}{\nu}\Norm{\sigmabf-\sigmabfh}_{\Sigmah} + \Norm{\ubf-\ubfh}_{\Vh} + \frac{1}{\nu}\Norm{\p-\ph}_{\Qh}
    \lesssim h^s\Big(\Norm{\sigmabf}_{H^{s}(\Ccalh)} + \Norm{\ubf}_{H^{s+1}(\Tauh)} +\Norm{\p}_{H^{s}(\Tauh)} \Big)
\end{equation*}
\end{theorem}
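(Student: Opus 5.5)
The plan is the standard Strang-type argument: split each error into an interpolation error and a discrete error, and control the discrete error with the inf-sup stability of Lemma~\ref{lemma:inf-sup-product-space} combined with the consistency of Lemma~\ref{lemma:consistency}. We write
\begin{equation*}
\sigmabf-\sigmabfh = (\sigmabf-\IcalSigmahkmo\sigmabf) + (\IcalSigmahkmo\sigmabf-\sigmabfh),
\end{equation*}
and analogously $\ubf-\ubfh$ and $\p-\ph$ using $\IcalVhk$ and $\IcalQhkmo$. By the triangle inequality, the interpolation parts are bounded directly by~\eqref{approximation:V-Q} and~\eqref{global-approximation:sigma}, which gives $h^s$ times the stated norms. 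It therefore remains to bound the discrete errors $(\IcalSigmahkmo\sigmabf - \sigmabfh,\IcalVhk\ubf - \ubfh,\IcalQhkmo p - \ph)$ in the norm~\eqref{norm-three-bars}. The $\nu$-weights in the statement should be read off from how~\eqref{norm-three-bars} scales; the $\nu$-dependence is hidden in the constants anyway, since $\nu$ is constant.

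\textbf{Step 1 (error equation).} Subtracting the discrete problem~\eqref{variational-formulation:discrete} from the consistency identity of Lemma~\ref{lemma:consistency} gives Galerkin orthogonality,
\begin{equation*}
B(\sigmabf-\sigmabfh,\ubf-\ubfh,\p-\ph;\taubfh,\vbfh,\qh)=0
\end{equation*}
for all discrete test functions. Inserting this into Lemma~\ref{lemma:inf-sup-product-space}, the numerator becomes $B(\IcalSigmahkmo\sigmabf-\sigmabf,\IcalVhk\ubf-\ubf,\IcalQhkmo p-\p;\taubfh,\vbfh,\qh)$. We expand this into its five terms and bound each one by the interpolation errors times $\Normthreebars{\taubfh}{\vbfh}{\qh}$.

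\textbf{Step 2 (bounding the terms).}
\begin{itemize}
\item The term $a(\IcalSigmahkmo\sigmabf-\sigmabf,\taubfh)$ follows from Cauchy--Schwarz.
\item The term $\bone(\IcalVhk\ubf-\ubf,\qh)$ vanishes, because the commuting property $\dive\IcalVhk\ubf=\IcalQhkmo\dive\ubf=0$ holds.
\item The term $\bone(\vbfh,\IcalQhkmo p-\p)$ vanishes, because $\dive\vbfh\in\Qhkmo$ and $\IcalQhkmo$ is the $\Ltwo$-projection.
\item The term $\btwo(\taubfh,\IcalVhk\ubf-\ubf)$ is handled by the argument in the proof of Lemma~\ref{lemma:continuity}. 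Since $\ubf$ is continuous, $\jump{(\IcalVhk\ubf-\ubf)_t}=\jump{(\IcalVhk\ubf)_t}$, and the norm $\Norm{\cdot}_{\Vh}$ of the velocity error is already controlled by~\eqref{approximation:V-Q}. The pairing is therefore bounded by $\Norm{\taubfh}_{\Sigmah}\Norm{\ubf-\IcalVhk\ubf}_{\Vh}$.
\item The term $\btwo(\IcalSigmahkmo\sigmabf-\sigmabf,\vbfh)$ is the delicate one.
\end{itemize}

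\textbf{Step 3 (main obstacle).} The last term involves the nonpolynomial function $\eta:=\sigmabf-\IcalSigmahkmo\sigmabf$, so the discrete trace inequality~\eqref{trace-inequality} cannot be used on the edge term. We split it as
\begin{equation*}
-\sum_{\Ki}(\eta,\epsbfun(\vbfh))_{\Ki}+\sum_{\F}\int_\F \eta_{\normtang}\jump{(\vbfh)_t}\ds.
\end{equation*}
The volume part is bounded by $\Norm{\eta}_{\Sigmah}\Norm{\vbfh}_{\Vh}$. For the edge part, $\sigmabf$ is continuous, so $\eta_{\normtang}$ is single-valued up to the independent interpolants from the two neighbouring subelements. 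On each side we use the edge estimate of~\eqref{local-approximation:sigma}, $\Norm{\eta_{\normtang}}_{\Ltwo(\Fi)}\lesssim h^{s-1/2}\Norm{\sigmabf}_{H^s(\Ki)}$. Together with $h^{-1/2}\Norm{\jump{(\vbfh)_t}}_{\Ltwo(\F)}\le\Norm{\vbfh}_{\Vh}$, this yields $h^s\Norm{\sigmabf}_{H^s(\Ccalh)}\Norm{\vbfh}_{\Vh}$.

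One caveat concerns the edge interpolant: the two neighbouring subelements impose the same edge moments~\eqref{interpolation:edge} of the continuous $\sigmabf_{\normtang}$, so $\IcalSigmahkmo\sigmabf$ indeed lies in $\Sigmahkmo$. This is the point I would verify first.

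\textbf{Conclusion.} Collecting the bounds of Steps 2 and 3, dividing by the test norm, and adding back the interpolation errors gives the stated estimate.
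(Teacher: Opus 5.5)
Your proposal is correct and follows essentially the same route as the paper: you split off the interpolation errors by the triangle inequality, and you use Lemma~\ref{lemma:inf-sup-product-space} together with the consistency of Lemma~\ref{lemma:consistency} to reduce the discrete error to $B(\IcalSigmahkmo\sigmabf-\sigmabf,\IcalVhk\ubf-\ubf,\IcalQhkmo p-p;\taubfh,\vbfh,\qh)$, whose $\btwo$ terms you then bound by Cauchy--Schwarz, the discrete trace inequality~\eqref{trace-inequality} on $\taubfh$, and the edge estimate in~\eqref{local-approximation:sigma}. The only (harmless) deviation is in the two $\bone$ terms: you show they vanish, via the commuting property of $\IcalVhk$ and the $\Ltwo$-orthogonality of $\IcalQhkmo$, whereas the paper simply bounds them by Cauchy--Schwarz.
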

\begin{proof}
The proof follows along the same lines as those of~\cite[Thm.~6.3]{Gopalakrishnan-Lederer-Schoberl:2020};
we include it here for completeness.

The triangle inequality gives
\begin{equation} \label{error-estimate:triangle}
\begin{split}
&\frac{1}{\nu}\Norm{\sigmabf-\sigmabfh}_{\Sigmah} + \Norm{\ubf-\ubfh}_{\Vh} + \frac{1}{\nu}\Norm{\p-\ph}_{\Qh} \\
&\qquad\qquad\lesssim \frac{1}{\nu}\Norm{\sigmabf-\IcalSigmahkmo\sigmabf}_{\Sigmah} + \Norm{\ubf-\IcalVhk\ubf}_{\Vh} + \frac{1}{\nu}\Norm{\p-\IcalQhkmo p}_{\Qh} \\
&\qquad\qquad\qquad +\frac{1}{\nu}\Norm{\IcalSigmahkmo\sigmabf-\sigmabfh}_{\Sigmah} + \Norm{\IcalVhk\ubf-\ubfh}_{\Vh} + \frac{1}{\nu}\Norm{\IcalQhkmo p-\ph}_{\Qh} \\
&\qquad\qquad\lesssim \frac{1}{\nu}\Norm{\sigmabf-\IcalSigmahkmo\sigmabf}_{\Sigmah} + \Norm{\ubf-\IcalVhk\ubf}_{\Vh} + \frac{1}{\nu}\Norm{\p-\IcalQhkmo p}_{\Qh} \\
&\qquad\qquad\qquad +\frac{1}{\sqrt{\nu}}\Normthreebars{\IcalSigmahkmo\sigmabf-\sigmabfh}{\IcalVhk\ubf-\ubfh}{\IcalQhkmo p-\ph}.
\end{split}
\end{equation}
Using the approximation estimates~\eqref{approximation:V-Q} and~\eqref{global-approximation:sigma},
the first three terms on the right-hand side of the above display can be bounded as needed.
As for the latter term,
using Lemmata~\ref{lemma:inf-sup-product-space}
and~\ref{lemma:consistency},
we deduce
\begin{equation} \label{sup:error-estimates}
\begin{split}
&\Normthreebars{\IcalSigmahkmo\sigmabf - \sigmabfh}{\IcalVhk\ubf - \ubfh}{\IcalQhkmo p - \ph} \\
&\qquad\qquad \leq
    \sup_{(\taubfh,\vbfh,\qh)\in\Sigmahkmo\times\Vhk\times\Qhkmo}
    \frac{B(\IcalSigmahkmo\sigmabf - \sigmabf,\IcalVhk\ubf - \ubf,\IcalQhkmo p - p; \taubfh,\vbfh,\qh)}{\Normthreebars{\taubfh}{\vbfh}{\qh}}.
\end{split}
\end{equation}
We now estimate the numerator.
As for the first three terms on the right-hand side of~\eqref{bilinear-form:B},
the Cauchy--Schwarz inequality yields  
\begin{equation} \label{a-bone:estimate}
\begin{split}
&a(\IcalSigmahkmo\sigmabf - \sigmabf,\taubfh) + \bone(\IcalVhk\ubf - \ubf,\qh) + \bone(\vbfh,\IcalQhkmo p - p) \\
&\quad\lesssim \Big(\frac{1}{\sqrt{\nu}}\Norm{\IcalSigmahkmo\sigmabf - \sigmabf}_{\Sigmah}\frac{1}{\sqrt{\nu}}\Norm{\taubfh}_{\Sigmah} \Big)
    + \Big(\sqrt{\nu}\Norm{\IcalVhk\ubf - \ubf}_{\Vh}\frac{1}{\sqrt{\nu}}\Norm{\qh}_{\Qh}\Big) \\
    &\qquad+ \Big(\sqrt{\nu}\Norm{\vbfh}_{\Vh}\frac{1}{\sqrt{\nu}}\Norm{\IcalQhkmo p - p}_{\Qh}\Big) \\
&\quad\leq \Normthreebars{\IcalSigmahkmo\sigmabf - \sigmabf}{\IcalVhk\ubf -\ubf}{\IcalQhkmo p - p}\Normthreebars{\taubfh}{\vbfh}{\qh}.
\end{split}
\end{equation}
As for the latter two terms on the right-hand side of~\eqref{bilinear-form:B},
the Cauchy--Schwarz inequality
and
the discrete trace inequality~\eqref{trace-inequality},
give
\begin{equation*} \label{btwo:estimate}
\begin{split}
&\btwo(\IcalSigmahkmo\sigmabf - \sigmabf,\vbfh) + \btwo(\taubfh,\IcalVhk\ubf - \ubf) \\
&\lesssim \sum_{\F \in \Fcalh} \sqrt{h}\Norm{(\IcalSigmahkmo\sigmabf - \sigmabf)_{\normtang}}_{\Ltwo(\F)}\frac{1}{\sqrt{h}}\Norm{\jump{(\vbfh)_{\tang}}}_{\Ltwo(\F)} 
    +\sum_{\Ki \in \Ccalh} \Norm{\IcalSigmahkmo\sigmabf - \sigmabf}_{\Ltwo(\Ki)}\Norm{\epsbfun(\vbfh)}_{\Ltwo(\Ki)} \\
&\quad+ \sum_{\F \in \Fcalh} \sqrt{h}\Norm{(\taubfh)_{\normtang}}_{\Ltwo(\F)}\frac{1}{\sqrt{h}}\Norm{\jump{(\IcalVhk\ubf - \ubf)_{\tang}}}_{\Ltwo(\F)} 
    +\sum_{\Ki \in \Ccalh} \Norm{\taubfh}_{\Ltwo(\Ki)}\Norm{\epsbfun(\IcalVhk\ubf - \ubf)}_{\Ltwo(\Ki)} \\
&\lesssim \left(\sum_{\F \in \Fcalh} \frac{h}{\nu}\Norm{(\IcalSigmahkmo\sigmabf - \sigmabf)_{\normtang}}^2_{\Ltwo(\F)}\right)^{\frac12}
          \left(\sum_{\F \in \Fcalh} \frac{\nu}{h}\Norm{\jump{(\vbfh)_{\tang}}}^2_{\Ltwo(\F)}\right)^{\frac12} \\
    &\quad+ \left(\sum_{\Ki \in \Ccalh} \frac{1}{\nu}\Norm{\IcalSigmahkmo\sigmabf - \sigmabf}^2_{\Ltwo(\Ki)}\right)^{\frac12}
             \left(\sum_{\K \in \Tauh}\nu\Norm{\epsbfun(\vbfh)}^2_{\Ltwo(\K)}\right)^{\frac12} \\
    &\quad+ \left(\sum_{\Ki \in \Ccalh} \frac{1}{\nu}\Norm{\taubfh}^2_{\Ltwo(\Ki)}\right)^{\frac12}
             \left(\sum_{\F \in \Fcalh} \frac{\nu}{{h}}\Norm{\jump{(\IcalVhk\ubf - \ubf)_{\tang}}}^2_{\Ltwo(\F)}\right)^{\frac12} \\
    &\quad+ \left(\sum_{\Ki \in \Ccalh} \frac{1}{\nu}\Norm{\taubfh}^2_{\Ltwo(\Ki)}\right)^{\frac12}
             \left(\sum_{\K \in \Tauh}\Norm{\epsbfun(\IcalVhk\ubf - \ubf)}^2_{\Ltwo(\K)}\right)^{\frac12}.
\end{split}
\end{equation*}
Starting from the right-hand side of the above display,
the definition of the norm~$\Normthreebars{\cdot}{\cdot}{\cdot}$ of~\eqref{norm-three-bars}
and the estimate
$\Normthreebars{\taubfh}{\vbfh}{0} \leq \Normthreebars{\taubfh}{\vbfh}{\ph}$,
yield
\begin{equation*}
\begin{split}
\btwo(\IcalSigmahkmo\sigmabf - \sigmabf,\vbfh) &+ \btwo(\taubfh,\IcalVhk\ubf - \ubf) \\
&\lesssim \frac{1}{\sqrt{\nu}}\left(\left(\sum_{\F \in \Fcalh} {h}\Norm{(\IcalSigmahkmo\sigmabf - \sigmabf)_{\normtang}}^2_{\Ltwo(\F)}\right)^{\frac12}
                +\Norm{\IcalSigmahkmo\sigmabf - \sigmabf}_{\Sigmah} \right)
                \Normthreebars{\taubfh}{\vbfh}{0} \\
    &\quad+ \sqrt{\nu}\Norm{\IcalVhk\ubf - \ubf}_{\Vh} 
                \Normthreebars{\taubfh}{\vbfh}{0} \\
&\lesssim \frac{1}{\sqrt{\nu}}\left(\left(\sum_{\F \in \Fcalh} {h}\Norm{(\IcalSigmahkmo\sigmabf - \sigmabf)_{\normtang}}^2_{\Ltwo(\F)}\right)^{\frac12}
                +\Norm{\IcalSigmahkmo\sigmabf - \sigmabf}_{\Sigmah} \right)
                \Normthreebars{\taubfh}{\vbfh}{\ph} \\
    &\quad+ \sqrt{\nu}\Norm{\IcalVhk\ubf - \ubf}_{\Vh} 
                \Normthreebars{\taubfh}{\vbfh}{\ph}.
\end{split}
\end{equation*}
Combining the above display with estimates~\eqref{a-bone:estimate} in~\eqref{sup:error-estimates},
we obtain
\begin{equation*}
\begin{split}
&\Normthreebars{\IcalSigmahkmo\sigmabf - \sigmabfh}{\IcalVhk\ubf - \ubfh}{\IcalQhkmo p - \ph} \\
&\qquad\qquad\lesssim \Normthreebars{\IcalSigmahkmo\sigmabf - \sigmabf}{\IcalVhk\ubf - \ubf}{\IcalQhkmo p - p}
    + \frac{1}{\sqrt{\nu}}\left(\sum_{\Fi \in \Fcalh} {h}\Norm{(\IcalSigmahkmo\sigmabf - \sigmabf)_{\normtang}}^2_{\Ltwo(\Fi)}\right)^{\frac12}.
\end{split}
\end{equation*}
Inserting the estimate above in~\eqref{error-estimate:triangle},
and applying the approximation
estimates~\eqref{approximation:V-Q}, \eqref{local-approximation:sigma}, and~\eqref{global-approximation:sigma},
the assertion follows.
\end{proof}

Due to the choice of the spaces~$\Vhk$ and~$\Qhkmo$,
the proposed method~\eqref{variational-formulation:discrete}
produces exactly divergence-free velocity fields,
cf.~\eqref{exact-divergence-free}.
In particular,
this leads to pressure robustness,
which is the subject of the next result.
\begin{theorem}
Under the same assumptions as Theorem~\ref{theorem:error-estimates},
the following estimate holds true
\begin{equation*}
\frac{1}{\nu}\Norm{\sigmabf-\sigmabfh}_{\Sigmah}
    + \Norm{\ubf - \ubfh}_{\Vh}
\lesssim
h^s \Norm{\ubf}_{H^{s+1}(\Tauh)}.
\end{equation*}
\end{theorem}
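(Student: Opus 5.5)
The plan is to repeat the argument of Theorem~\ref{theorem:error-estimates}, this time restricted to the discrete divergence-free subspace so that the pressure never enters. First, by \eqref{exact-divergence-free}, the discrete velocity satisfies $\dive\ubfh=0$. The $H(\dive)$-interpolant commutes with the divergence, $\dive\IcalVhk\ubf=\IcalQhkmo\dive\ubf=0$, so both $\ubfh$ and $\IcalVhk\ubf$ lie in $\Vhkz$. I would then consider the reduced saddle-point problem on $\Sigmahkmo\times\Vhkz$. Testing the second equations of \eqref{variational-formulation:discrete} and of the consistency identity of Lemma~\ref{lemma:consistency} with $\vbfh\in\Vhkz$ makes the terms $\bone(\vbfh,\ph)$ and $\bone(\vbfh,p)$ vanish. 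Subtracting gives the pressure-free error equations
\begin{align*}
a(\sigmabf-\sigmabfh,\taubfh)+\btwo(\taubfh,\ubf-\ubfh)&=0 &&\forall\taubfh\in\Sigmahkmo,\\
\btwo(\sigmabf-\sigmabfh,\vbfh)&=0 &&\forall \vbfh\in\Vhkz.
\end{align*}

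Next I would establish stability of this reduced problem. Lemma~\ref{lemma:discrete-inf-sup-bt} gives the inf-sup condition \eqref{discrete-inf-sup:btwo} for $\btwo$ on $\Sigmahkmo\times\Vhkz$, and Lemma~\ref{lemma:continuity} gives continuity. For coercivity of $a$ on the kernel $\{\taubfh:\btwo(\taubfh,\vbfh)=0\ \forall\vbfh\in\Vhkz\}$, nothing is needed: every $\taubfh\in\Sigmahkmo$ is trace-free, so $\dev\taubfh=\taubfh$ and $a(\taubfh,\taubfh)=\nu^{-1}\Norm{\taubfh}_{\Sigmah}^2$ on the whole space. Brezzi's theorem then yields, with the scaled norm $\nu^{-1/2}\Norm{\cdot}_{\Sigmah}+\sqrt{\nu}\Norm{\cdot}_{\Vh}$, an inf-sup bound for the reduced bilinear form $a(\cdot,\cdot)+\btwo(\cdot,\cdot)+\btwo(\cdot,\cdot)$. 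I apply it to the discrete errors $(\IcalSigmahkmo\sigmabf-\sigmabfh,\IcalVhk\ubf-\ubfh)$.

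Using the error equations, the numerator becomes $a(\IcalSigmahkmo\sigmabf-\sigmabf,\taubfh)+\btwo(\IcalSigmahkmo\sigmabf-\sigmabf,\vbfh)+\btwo(\taubfh,\IcalVhk\ubf-\ubf)$. This is bounded exactly as in the proof of Theorem~\ref{theorem:error-estimates}, by
\[
\nu^{-1/2}\Norm{\sigmabf-\IcalSigmahkmo\sigmabf}_{\Sigmah}+\nu^{-1/2}\Big(\sum_{\F\in\Fcalh} h\Norm{(\sigmabf-\IcalSigmahkmo\sigmabf)_{\normtang}}_{\Ltwo(\F)}^2\Big)^{1/2}+\sqrt{\nu}\Norm{\ubf-\IcalVhk\ubf}_{\Vh}.
\]
A triangle inequality then yields
\[
\nu^{-1}\Norm{\sigmabf-\sigmabfh}_{\Sigmah}+\Norm{\ubf-\ubfh}_{\Vh}\lesssim \nu^{-1}h^s\Norm{\sigmabf}_{H^s(\Ccalh)}+h^s\Norm{\ubf}_{H^{s+1}(\Tauh)},
\]
by \eqref{approximation:V-Q}, \eqref{local-approximation:sigma}, and \eqref{global-approximation:sigma}. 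No pressure norm appears, which is the point of the argument.

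The remaining step, and the main obstacle, is to absorb the stress term into $\Norm{\ubf}_{H^{s+1}}$ with the correct $\nu$-scaling. Since $\sigmabf=\nu\epsbfun(\ubf)$, we have $\nu^{-1}\Norm{\sigmabf}_{H^s(\Ccalh)}=\Norm{\epsbfun(\ubf)}_{H^s(\Ccalh)}\le\Norm{\ubf}_{H^{s+1}(\Tauh)}$. The refinement $\Ccalh$ only enlarges the broken norm domain-wise, which is harmless because $\ubf$ is smooth on each $\K\supset\Ki$. The delicate point is bookkeeping: I must make sure that every estimate is carried out with the $\nu$-weights kept consistent, so that the hidden constants are truly independent of $\nu$, and that $\IcalVhk$ indeed preserves the divergence-free property. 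If the commuting property were unavailable, I would instead compare with any $\vbfh\in\Vhkz$ and use the standard fact that the best approximation from $\Vhkz$ is comparable to that from $\Vhk$ for divergence-free $\ubf$.
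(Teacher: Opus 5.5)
Your proposal is correct and follows essentially the same route as the paper. The paper proves the result by rerunning the argument of Theorem~\ref{theorem:error-estimates} with the pressure omitted (as in Gopalakrishnan--Lederer--Sch\"oberl, Thm.~6.4), and then uses $\sigmabf=\nu\epsbfun(\ubf)$ to turn $\nu^{-1}h^s\Norm{\sigmabf}_{H^s(\Ccalh)}$ into $h^s\Norm{\ubf}_{H^{s+1}(\Tauh)}$. Your explicit reduction to $\Sigmahkmo\times\Vhkz$ spells out the mechanism the paper leaves implicit: it is justified by $\dive\ubfh=0$ and $\dive\IcalVhk\ubf=0$, and relies on coercivity of $a$ on all of $\Sigmahkmo$ together with the inf-sup bound \eqref{discrete-inf-sup:btwo}.
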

\begin{proof}
The proof follows along the same lines as those of Theorem~\ref{theorem:error-estimates}
and of~\cite[Thm.~6.4]{Gopalakrishnan-Lederer-Schoberl:2020}.
Omitting the pressure term,
estimate~\eqref{error-estimate:triangle} reads as
\begin{equation*}
\frac{1}{\nu}\Norm{\sigmabf-\sigmabfh}_{\Sigmah} + \Norm{\ubf-\ubfh}_{\Vh}
\lesssim
\frac{h^s}{\nu}\Norm{\sigmabf}_{H^s(\Ccalh)}
    + h^s\Norm{\ubf}_{H^{s+1}(\Tauh)}
    + \frac{1}{\sqrt{\nu}}\Normthreebars{\IcalSigmahkmo\sigmabf-\sigmabfh}{\IcalVhk\ubf-\ubfh}{0}.
\end{equation*}
Using the identity $\sigmabf = \nu \epsbfun(\ubf)$,
the assertion follows proceeding as before.
\end{proof}

\section{Numerical results} \label{section:numerics}
In this section, we present numerical results
to verify the performance of the proposed methods
and validate the theoretical analysis.
All numerical experiments are performed using the finite element library
\texttt{NGSolve}/\texttt{Netgen}~\cite{Schoberl:1997,Schoberl:2014}.

Given the exact solution~$(\sigmabf,\ubf,\p)$ to the strong formulation~\eqref{strong-mixed}, 
let $(\sigmabfh,\ubfh,\ph)$ denote the discrete solution
to the formulation~\eqref{variational-formulation:discrete},
for a given polynomial degree~$k\geq 1$.
We consider the following error measurements:
\begin{itemize}
\item as for the fluxes,
the $\Ltwo$-norm of the stress error
and the $\Ltwo$-norm of the symmetric part of the stress error
\begin{equation*}
\Ekmosigma := \Norm{\sigmabf - \sigmabfh}_{\Ltwo(\Omega)},
\qquad\text{and}\qquad
\Ekmosigmasym := \Norm{\sym(\sigmabf) - \sym(\sigmabfh)}_{\Ltwo(\Omega)};
\end{equation*}
\item as for the velocity,
the $\Ltwo$-norm of the symmetric gradient error
and the $\Ltwo$-norm of the velocity error
\begin{equation*}
\Ekepsu := \Norm{\epsbfun(\ubf) - \epsbfun(\ubfh)}_{\Ltwo(\Omega)}
\qquad\text{and}\qquad
\Eku := \Norm{\ubf - \ubfh}_{\Ltwo(\Omega)};
\end{equation*}
\item as for the pressure,
the $\Ltwo$-norm of the pressure error
$
\Ekmop := \Norm{p - \ph}_{\Ltwo(\Omega)}.
$
\end{itemize}
In the following,
we validate the proposed methods on two numerical examples:
a Stokes problem with a smooth, divergence-free exact velocity
to assess the convergence rates of the errors and the pressure robustness of the method,
and a non-Newtonian channel flow problem governed by a power-law model
to assess the performance of the methods for non-Newtonian fluids.

\subsection{Curl-generated velocity field}  \label{subsection:stokes}
On the two-dimensional domain $\Omega = (0,1)^2$,
we consider the following prescribed exact solution to the Stokes problem~\eqref{strong-mixed}
\begin{equation*}
\ubf = \curlbf(x^2(x-1)^2y^2(y-1)^2), \qquad
\p = x^5 + y^5 - \frac{1}{3}, \qquad
\sigmabf = \nu \epsbfun(\ubf),
\end{equation*}
and compute the corresponding forcing term~$\fbf = -\divebf \sigmabf + \nablabf p$.

In Figure~\ref{fig:stokes},
we show the convergence plots of the errors introduced above
with respect to $h$-refinement,
for a fixed viscosity $\nu = 10^{-3}$
and polynomial degrees $k=1,2,3,4$.
As expected from the theoretical analysis,
see Theorem~\ref{theorem:error-estimates},
the error of the symmetric velocity gradient,
as well as the~$\Ltwo$-norm errors
of the stress and pressure,
converge with order~$k$.
The $\Ltwo$-norm error of the velocity instead converges with order~$k+1$,
as expected from the standard Aubin--Nitsche argument
under sufficiently smooth regularity requirements on~$\ubf$
and under full elliptic regularity assumptions,
which hold also for the dual problem
since the problem is symmetric.

A detailed summary of the velocity and pressure errors,
together with the corresponding experimental orders of convergence (eoc)
for polynomial degrees $k=1,2$,
is reported in Table~\ref{tab:errors-u-p}.
In Table~\ref{tab:errors-sigma},
we compare the error of the stress and of its symmetric part
for polynomial degrees $k=1,2$.
The two quantities coincide up to machine precision,
confirming that the discrete stress solution is exactly symmetric.

Notice that the velocity field~$\ubf$ is the $\curlbf$ of a scalar potential,
and is therefore divergence-free;
this exact solution thus allows us to numerically validate
the pressure robustness of the method.
In Table~\ref{tab:nu},
we report the $\Ltwo$-norm velocity error
with respect to the viscosity~$\nu$
on a fixed mesh and
for each polynomial degree~$k=1,2,3$.  
We observe that the velocity error~$\Eku$ is independent of the viscosity~$\nu$,
even across several orders of magnitude,
thus confirming the pressure robustness.

\begin{figure}[htb]
    \centering
    \includegraphics[width=0.99\linewidth]{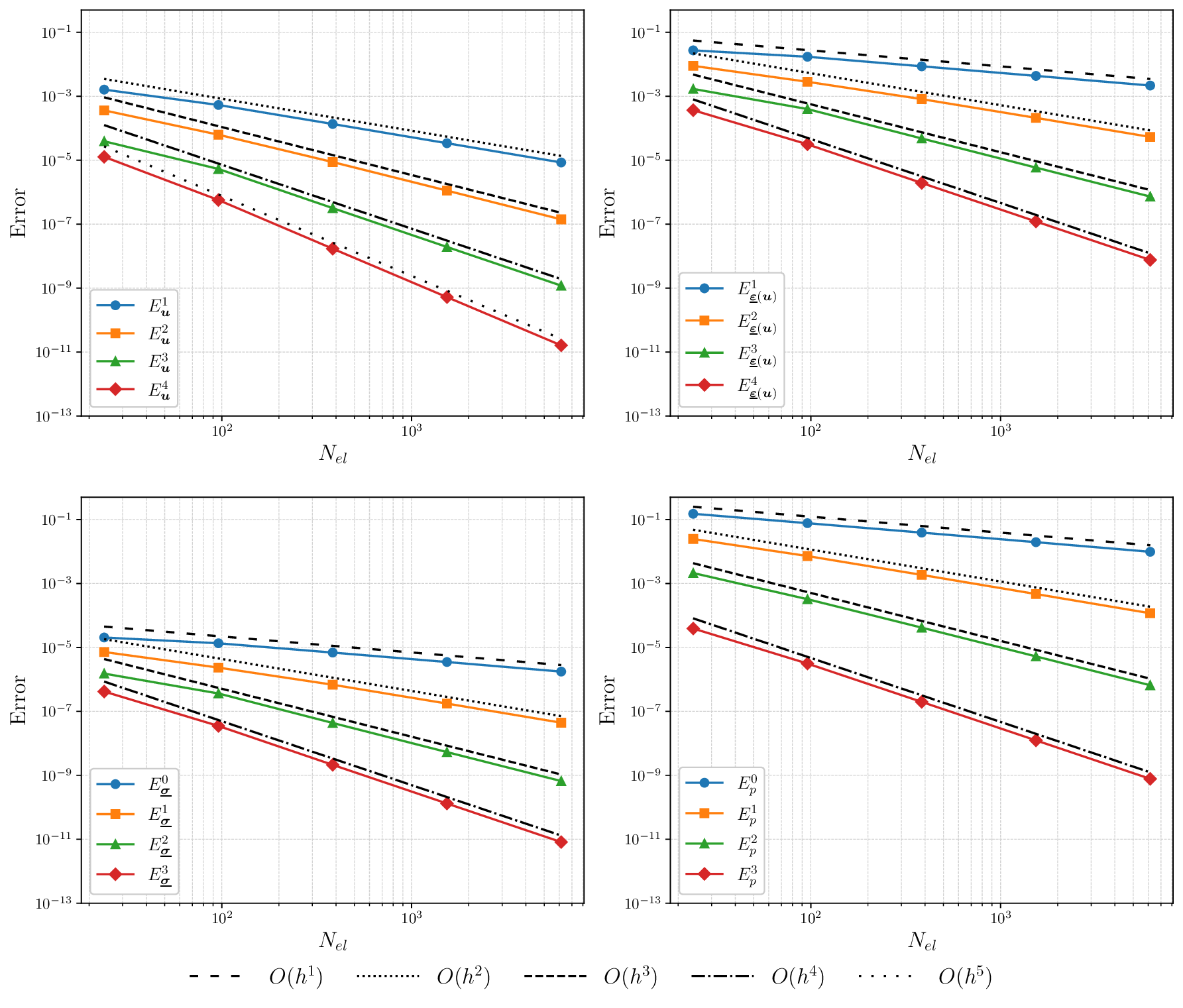}
    \caption{ \footnotesize Problem of Section~\ref{subsection:stokes}:
    convergence plot with respect to $h$-refinement for
    the discrete solutions to the exact symmetric formulation~\eqref{variational-formulation:discrete}
    and polynomial degrees $k=1,2,3,4$.}
     \label{fig:stokes}
\end{figure}

\begin{table}[htbp]
\centering
\begin{tabular}{c|cc cc cc}
\toprule
$\Nel$
& $\Eku$ & (eoc)
& $\Ekepsu$ & (eoc)
& $\Ekmop$ & (eoc) \\
\midrule
\multicolumn{7}{c}{$k = 1$} \\
24    & $1.6\cdot 10^{-3}$ & (--) & $2.7\cdot 10^{-2}$ & (--) & $1.5\cdot 10^{-1}$ & (--) \\
96    & $5.3\cdot 10^{-4}$ & (1.6) & $1.7\cdot 10^{-2}$ & (0.7) & $7.7\cdot 10^{-2}$ & (1.0) \\
384   & $1.4\cdot 10^{-4}$ & (2.0) & $8.7\cdot 10^{-3}$ & (1.0) & $3.9\cdot 10^{-2}$ & (1.0) \\
1536  & $3.4\cdot 10^{-5}$ & (2.0) & $4.3\cdot 10^{-3}$ & (1.0) & $2.0\cdot 10^{-2}$ & (1.0) \\
6144  & $8.5\cdot 10^{-6}$ & (2.0) & $2.2\cdot 10^{-3}$ & (1.0) & $9.8\cdot 10^{-3}$ & (1.0) \\
\midrule
\multicolumn{7}{c}{$k = 2$} \\
24    & $3.6\cdot 10^{-4}$ & (--) & $9.0\cdot 10^{-3}$ & (--) & $2.5\cdot 10^{-2}$ & (--) \\
96    & $6.3\cdot 10^{-5}$ & (2.5) & $2.8\cdot 10^{-3}$ & (1.7) & $7.2\cdot 10^{-3}$ & (1.8) \\
384   & $8.8\cdot 10^{-6}$ & (2.8) & $8.2\cdot 10^{-4}$ & (1.8) & $1.9\cdot 10^{-3}$ & (2.0) \\
1536  & $1.1\cdot 10^{-6}$ & (3.0) & $2.1\cdot 10^{-4}$ & (1.9) & $4.7\cdot 10^{-4}$ & (2.0) \\
6144  & $1.4\cdot 10^{-7}$ & (3.0) & $5.4\cdot 10^{-5}$ & (2.0) & $1.2\cdot 10^{-4}$ & (2.0) \\
\bottomrule
\end{tabular}
\caption{\footnotesize Problem of Section~\ref{subsection:stokes}:
        velocity and pressure errors under $h$-refinement and
        corresponding experimental order of convergence (eoc),
        for fixed polynomial degree $k=1,2$.}
\label{tab:errors-u-p}
\end{table}

\begin{table}[htbp]
\centering
\begin{tabular}{c|ccc|ccc}
\toprule
& \multicolumn{3}{c|}{$k=1$}
& \multicolumn{3}{c}{$k=2$} \\
$\Nel$
& $\Ekmosigma$
& $\Ekmosigmasym$
& (eoc)
& $\Ekmosigma$
& $\Ekmosigmasym$
& (eoc) \\
\midrule
24
& $2.0\cdot10^{-5}$ & $2.0\cdot10^{-5}$ & (--)
& $7.2\cdot10^{-6}$ & $7.2\cdot10^{-6}$ & (--) \\

96
& $1.3\cdot10^{-5}$ & $1.3\cdot10^{-5}$ & (0.6)
& $2.3\cdot10^{-6}$ & $2.3\cdot10^{-6}$ & (1.6) \\

384
& $6.9\cdot10^{-6}$ & $6.9\cdot10^{-6}$ & (1.0)
& $6.8\cdot10^{-7}$ & $6.8\cdot10^{-7}$ & (1.8) \\

1536
& $3.5\cdot10^{-6}$ & $3.5\cdot10^{-6}$ & (1.0)
& $1.8\cdot10^{-7}$ & $1.8\cdot10^{-7}$ & (2.0) \\

6144
& $1.8\cdot10^{-6}$ & $1.8\cdot10^{-6}$ & (1.0)
& $4.4\cdot10^{-8}$ & $4.4\cdot10^{-8}$ & (2.0) \\
\bottomrule
\end{tabular}
\caption{\footnotesize Problem of Section~\ref{subsection:stokes}:
        stress errors under $h$-refinement and
        corresponding experimental order of convergence (eoc),
        for fixed polynomial degree $k=1,2$.}
\label{tab:errors-sigma}
\end{table}

\begin{table}[htbp]
\centering
\begin{tabular}{c c c c c c c c}
\toprule
& & \multicolumn{6}{c}{$\nu$} \\
& & $10^{-7}$ & $10^{-5}$ & $10^{-3}$ & $10^{-1}$ & $10^1$ & $10^3$  \\
\midrule 
\multirow{3}{*}{$\Eku$}
& \multicolumn{1}{c|}{$k = 1$}
& $2.7\cdot 10^{-2}$ & $2.7\cdot 10^{-2}$ & $2.7\cdot 10^{-2}$ & $2.7\cdot 10^{-2}$ & $2.7\cdot 10^{-2}$ & $2.7\cdot 10^{-2}$\\
& \multicolumn{1}{c|}{$k = 2$}
& $8.9\cdot 10^{-3}$ & $8.9\cdot 10^{-3}$ & $8.9\cdot 10^{-3}$ & $8.9\cdot 10^{-3}$ & $8.9\cdot 10^{-3}$ & $8.9\cdot 10^{-3}$\\
& \multicolumn{1}{c|}{$k = 3$}
& $1.7\cdot 10^{-3}$ & $1.7\cdot 10^{-3}$ & $1.7\cdot 10^{-3}$ & $1.7\cdot 10^{-3}$ & $1.7\cdot 10^{-3}$ & $1.7\cdot 10^{-3}$\\
\bottomrule
\end{tabular}
\caption{\footnotesize Problem of Section~\ref{subsection:stokes}:
                        velocity error $\Eku$ with respect to the viscosity~$\nu$,
                        on a fixed mesh and for fixed polynomial degree~$k=1,2,3$.}
\label{tab:nu}
\end{table}
 
\subsection{Non-Newtonian channel flow with power-law fluid model} \label{subsection:channel-flow}

In this second experiment,
we consider a two-dimensional channel flow problem
as an illustrative benchmark for non-Newtonian fluids.
The direct access to the symmetric gradient of the velocity field~$\epsbfun(\ubf)$
is particularly relevant in this context,
since the constitutive relations of non-Newtonian fluids
are typically expressed in terms of~$\epsbfun(\ubf)$.
A well-known example is the power-law fluid model considered in this section.

A rigorous analysis of the resulting nonlinear discretization
goes beyond the scope of this work;
here,
we only aim to show the applicability
of the proposed method beyond the Newtonian setting
analyzed in the previous sections.

We consider the domain $\Omega = (0,2)\times(-1/2,1/2)$,
with no-slip boundary conditions on the walls, i.e. $(0,2) \times \{ -1/2, 1/2 \}$,
and periodic boundary conditions at the inlet $\{ 0 \} \times (-1/2,1/2)$ and outlet $\{ 2 \} \times (-1/2,1/2)$.
For the non-Newtonian fluid model we adopt a power-law model,
also known in the literature as the Ostwald-de Waele model;
see, e.g.~\cite{Glowinski-Wachs:2011},
where,
given a consistency value~$K>0$ and a power-law index~$r>1$,
the stress tensor~$\sigmabf$ and the velocity field~$\ubf$ 
satisfy the following constitutive relation
\begin{equation} \label{power-law}
\epsbfun(\ubf) - (2K)^{\frac{1}{1-r}}(\sqrt{2}|\sigmabf|)^{\frac{r-2}{1-r}}\sigmabf = 0.
\end{equation}
Observe that the case $r=2$ corresponds to the classical Newtonian fluid model.
We fix the forcing term $\fbf = (2,0)$
and the consistency value~$K=1$,
and we consider the following field as the exact velocity solution 
\begin{equation*}
\ubf =
\left(
\begin{array}{c}
\frac{r-1}{r}\Big(\frac{2}{K}\Big)^{\frac{1}{r-1}}\Big(\frac12\Big)^{\frac{r}{r-1}}\Big(1-2^{\frac{r}{r-1}}|y|^{\frac{r}{r-1}}\Big)  \\
0
\end{array}
\right)
;
\end{equation*}
the exact stress tensor~$\sigmabf$ is computed accordingly via~\eqref{power-law}.

We introduce a suitable modification
of the exactly symmetric MCS discrete formulation~\eqref{variational-formulation:discrete},
where the term $a(\sigmabfh,\taubfh)$ in the first equation is replaced by
\begin{equation*} 
\widetilde{a}(\sigmabfh,\taubfh)
:= \sum_{\K \in \Tauh} \sum_{i=1}^3 \int_{\Ki}
    (2K)^{\frac{1}{1-r}}(\sqrt{2}|\sigmabfh|)^{\frac{r-2}{1-r}} \sigmabfh : \taubfh\dx.
\end{equation*}
To address the nonlinearity of the power-law model,
we employ a fixed-point iteration scheme.
We consider power-law indices $r\in \{1.5, 2, 2.5\}$
and compare the results obtained with the aforementioned formulation.
Figure~\ref{fig:non-newtonian-channel-flow}
reports the corresponding convergence plots under~$h$-refinement.
In all considered cases,
we observe optimal convergence rates:
order~$k$ for the errors of symmetric gradient of the velocity, the stress, and the pressure,
and order~$k+1$ for the $\Ltwo$-norm error of the velocity.
Moreover,
the discrete stress solution is exactly symmetric also in this test case,
as shown in Table~\ref{tab:symmetry-error-channel}
for~$k=1$ and~$r=1.5$.
Similar observations hold for other values of $k$ and $r$.

\begin{figure}[htb]
    \centering
    \includegraphics[width=0.99\linewidth]{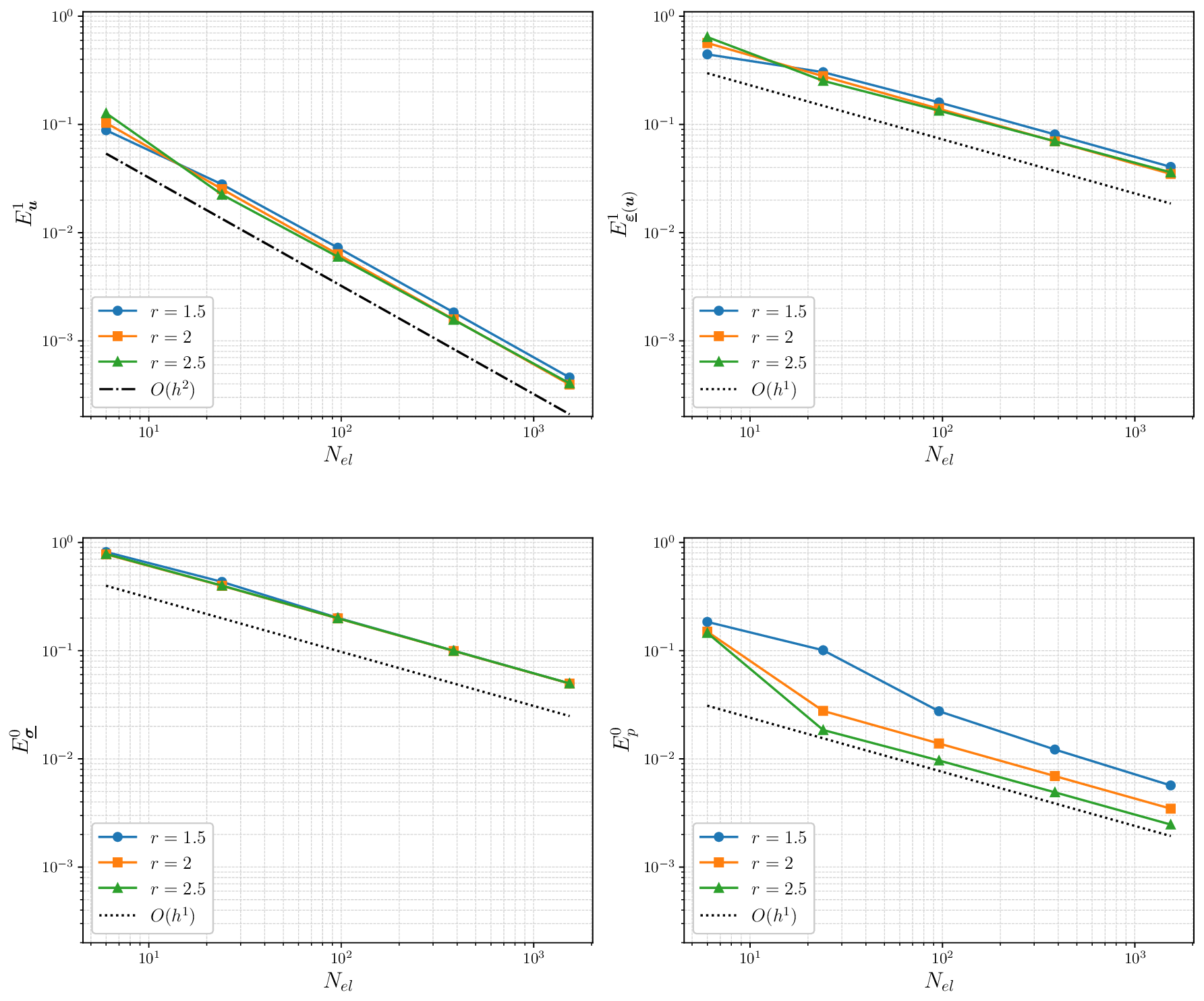}
    \caption{\footnotesize Problem of Section~\ref{subsection:channel-flow}:
    convergence plot under $h$-refinement for
    the discrete solutions under the power-law fluid model with $r=1.5,2,2.5$,
    for polynomial degree $k=1$.}
     \label{fig:non-newtonian-channel-flow}
\end{figure}

\begin{table}[htbp]
\centering
\begin{tabular}{c|ccc}
\toprule
& \multicolumn{3}{c}{$k=1$}\\
$\Nel$
& $\Ekmosigma$
& $\Ekmosigmasym$
& (eoc) \\
\midrule
24
& $8.1\cdot10^{-1}$ & $8.1\cdot10^{-1}$ & (--)\\

96
& $4.3\cdot10^{-1}$ & $4.3\cdot10^{-1}$ & (1.0) \\

384
& $2.0\cdot10^{-1}$ & $2.0\cdot10^{-1}$ & (1.0) \\

1536
& $9.9\cdot10^{-2}$ & $9.9\cdot10^{-2}$ & (0.9) \\

6144
& $4.9\cdot10^{-2}$ & $4.9\cdot10^{-2}$ & (1.0) \\
\bottomrule
\end{tabular}
\caption{\footnotesize Problem of Section~\ref{subsection:channel-flow}:
        stress errors under $h$-refinement and
        corresponding experimental order of convergence (eoc),
        for fixed polynomial degree $k=1$
        and power-law index $r=1.5$.}
\label{tab:symmetry-error-channel}
\end{table}

\section*{Acknowledgments} 
MM has been partially funded by the
Ministero dell'Università e della Ricerca
(MUR)
under PRIN2022 - 202292JW3F.
MM is a member of the
Gruppo Nazionale Calcolo Scientifico-Istituto Nazionale di Alta Matematica
(GNCS-INdAM)
and acknowledges funding
under GNCS-INdAM Project - CUP{\textunderscore}E53C25002010001.
MM gratefully acknowledges the hospitality
of the University of Hamburg.
{\footnotesize
\bibliography{bib.bib}}
\bibliographystyle{plain}

\end{document}